\documentclass[10pt]{article}
\usepackage{color}
\usepackage{amssymb}
\usepackage{amsthm,array,amssymb,amscd,amsfonts,latexsym, url}
\usepackage{amsmath}
\usepackage[all]{xy}

\newtheorem{theo}{Theorem}[section]
\newtheorem{sublemma}[theo]{Sublemma}
\newtheorem{prop}[theo]{Proposition}
\newtheorem{lemm}[theo]{Lemma}
\newtheorem{coro}[theo]{Corollary}
\newtheorem{rema}[theo]{Remark}
\newtheorem{Defi}[theo]{Definition}

\newtheorem{example}[theo]{Example}
\voffset=-1in
\setlength{\hoffset}{1,5cm}
\setlength{\oddsidemargin}{0cm}
\setlength{\textheight}{23cm}
\setlength{\textwidth}{14cm}

%\footheight 35cm

\title{Unirational threefolds with no universal \\codimension $2$ cycle}
\author{Claire Voisin
\\CNRS and \'{E}cole Polytechnique}
%\tableofcontents
\date{}

\newfont{\gothic}{eufb10}
\begin{document}
\maketitle
\setcounter{section}{-1}

\begin{abstract}   We prove that the general  quartic double
solid with $k\leq 7$ nodes
   does not
admit a Chow theoretic  decomposition of the diagonal, (or
equivalently  has a nontrivial universal ${\rm CH}_0$ group,) and
the same holds if we replace in this statement ``Chow theoretic'' by
``cohomological''. In particular, it is not stably rational. We also
deduce  that the general quartic double solid with seven nodes does
not admit a universal codimension $2$ cycle parameterized by its
intermediate Jacobian, and even does not admit a parametrization
with rationally connected fibers of its Jacobian by a family of
$1$-cycles. This finally implies that its third unramified
cohomology group is not universally trivial.
\end{abstract}

\section{Introduction}
Let $X$ be a smooth connected  complex projective variety. If ${\rm
CH}_0(X)=\mathbb{Z}$ (or equivalently the subgroup ${\rm
CH}_0(X)_0\subset {\rm CH}_0(X)$ of $0$-cycles of degree $0$ is
$0$), we have the Bloch-Srinivas decomposition of the diagonal (see
\cite{blochsrinivas}) which says that for some integer $N\not=0$,
\begin{eqnarray}\label{decompintro}
N\Delta_X=Z_1+Z_2\,\,{\rm in}\,\, {\rm CH}^n(X\times X),\,n={\rm
dim}\,X,
\end{eqnarray} where $Z_2=N(X\times x)$ for some point $x\in X(\mathbb{C})$ and $Z_1$ is supported on
$D\times X$, for some proper closed algebraic subset
$D\varsubsetneqq X$. Note  that conversely, if we have a decomposition as in (\ref{decompintro}),
then ${\rm CH}_0(X)=\mathbb{Z}$ because the left hand side acts as
$N$ times the identity on ${\rm CH}_0(X)_{0}$
while the right hand side acts as $0$ on ${\rm CH}_0(X)_{0}$, so that
$N \,{\rm CH}_0(X)_{0}=0$ and finally ${\rm CH}_0(X)_{0}=0$ by Roitman's theorem
\cite{roitor} on torsion.

Denoting $K=\mathbb{C}(X)$, (\ref{decompintro})
is equivalent by the localization exact sequence to the fact  that
the diagonal point $\delta_K$ of $X_K$, that is, the restriction to
${\rm Spec}(K)\times X$ of $\Delta_X$, satisfies
$$N\delta_K=N x_K\,\,{\rm in}\,\,{\rm CH}^n(X_K)={\rm CH}_0(X_K).$$
When there exists a decomposition as in (\ref{decompintro}) with
$N=1$, we will say that $X$ admits a {\it Chow theoretic decomposition of
the diagonal}.  As explained in \cite[Lemma 1.3]{auelCT}, this is
equivalent to saying that ${\rm CH}_0(X)$ is universally trivial, in
the sense that for any field $L$ containing $\mathbb{C}$, ${\rm
CH}_0(X_L)_0=0$.

By taking cohomology classes in (\ref{decompintro}), one gets as well
a decomposition

\begin{eqnarray}\label{decompintrocohN}
N[\Delta_X]=[Z_1]+[Z_2]\,\,{\rm in}\,\, {\rm H}^{2n}(X\times
X,\mathbb{Z}),\,n={\rm dim}\,X,
\end{eqnarray} where $Z_1,\,Z_2, \,D$ are as above, from which Bloch and Srinivas
\cite{blochsrinivas} deduce a number of interesting consequences.
When there exists a decomposition as in (\ref{decompintrocohN}) with
$N=1$, that is
\begin{eqnarray}\label{decompintrocoh}
[\Delta_X]=[Z_1]+[X\times x]\,\,{\rm in}\,\, { H}^{2n}(X\times
X,\mathbb{Z}),
\end{eqnarray} where  $Z_1$ is supported on
$D\times X$, for some proper closed algebraic subset $D\subset X$,
we will say that $X$ admits a {\it cohomological decomposition of
the diagonal}. We started the study of this property in \cite{voisinJAG}, mainly in the case of rationally connected threefolds.

In either of its forms (Chow-theoretic or cohomological), the
existence of a decomposition of the diagonal is an important
criterion for rationality, as noticed in \cite{auelCT},
\cite{voisinJAG}. This property is in fact invariant under stable
birational equivalence, where we say that $X$ and $Y$ are stably
birational if $X\times \mathbb{P}^r\stackrel{{\rm birat}}{\cong}
Y\times \mathbb{P}^s$ for some integers $r$ and $s$. As this
property is obviously satisfied by projective space, an $X$ not
satisfying this criterion is not stably rational.

We  compare
 these two properties  (the existence of a Chow-theoretic and cohomological decompositions of the diagonal)
 in
\cite{voisincubic}, showing in particular that they are equivalent
for cubic fourfolds and odd dimensional smooth cubic hypersurfaces.

It is a basic fact, proved in \cite{voisinJAG}, that a smooth
complex  projective variety $X$ with nontrivial Artin-Mumford
invariant (the torsion subgroup of $H^3(X,\mathbb{Z})$)  does not
admit a cohomological decomposition (and a fortiori a Chow-theoretic
decomposition) of the diagonal. This is the starting point of this
paper and combined with a degeneration (or specialization) argument
(see Theorem \ref{theodeg}), this will provide us with simple
examples of smooth projective unirational threefolds with trivial
Artin-Mumford invariant which do not admit a decomposition of the
diagonal, hence are not stably rational. Note that these varieties
also have trivial higher degree unramified cohomology groups by
\cite{CTvoisin}.

Let us now discuss a little more precisely the results in this paper, distinguishing the Chow-theoretic
and cohomological cases.
\subsection{Chow-theoretic decomposition of the diagonal and application to stable rationality
\label{sec01}} The first examples of unirational threefolds with
nontrivial torsion in their degree $3$ integral Betti cohomology
were constructed by Artin and Mumford \cite{artinmumford}. These
varieties are desingularizations of certain special $10$-nodal
quartic double solids (that is, double covers
$Y\rightarrow\mathbb{P}^3$ ramified along a quartic surface with
$10$ nodes in special position). The examples we consider are smooth
quartic double solids, for which the quartic surface is smooth, or
desingularizations of $k$-nodal quartic double solids, for which the
quartic  surface has $k$ ordinary quadratic singularities, but
unlike the Artin-Mumford examples, the nodes of the quartic surface
will be in general position. They are unirational (see
\cite{beauville}), a fact which in the nodal case is immediate to
prove  by considering the preimages in $Y$ of lines in $
\mathbb{P}^3$ passing through the node : these curves are rational
and the family of these curves endows $Y$ with a conic bundle
structure, which furthermore contains a multisection which is a
rational surface, giving unirationality. If the quartic surface has
at most seven nodes in general position, then $X$ has no torsion in
its integral cohomology. We will prove by a degeneration argument
(cf. Theorem \ref{theodeg},(i)) the following result.
\begin{theo} \label{theointrochow} (Cf. Theorem \ref{theocoro}) Let $X$ be the desingularization of a very general quartic double
solid with at most seven nodes. Then $X$ does not admit an integral
Chow-theoretic decomposition of the diagonal, hence it is not stably rational.

\end{theo}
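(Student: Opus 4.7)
The plan is to deduce Theorem \ref{theointrochow} from a specialization argument whose engine is the degeneration theorem (Theorem \ref{theodeg}) the author has set up in the introduction. The idea is that a very general $k$-nodal quartic double solid with $k\leq 7$ and generic configuration can be realized as the generic fiber of a one-parameter family of quartic double solids whose special fiber is an Artin--Mumford $10$-nodal quartic double solid. Because the Artin--Mumford desingularization has nontrivial $2$-torsion in $H^3(-,\mathbb{Z})$, it cannot admit even a cohomological decomposition of the diagonal, and this obstruction propagates back to the generic fiber.

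More precisely, I would start by choosing a pencil (or suitable arc in the moduli space of quartic surfaces) whose general member is a quartic surface $S_t\subset\mathbb{P}^3$ with exactly $k$ ordinary nodes in general position, and whose central member $S_0$ is one of the Artin--Mumford symmetroid quartic surfaces with ten nodes in the special position forcing nontrivial Brauer class on the double cover. Taking double covers of $\mathbb{P}^3$ branched along $S_t$ produces a family $\mathcal{Y}\to B$ of nodal quartic double solids over a smooth curve $B$, with generic fiber $k$-nodal and central fiber the Artin--Mumford variety. Performing small resolutions (which exist after a finite base change thanks to the defect of the family, since the nodes become conifold points in the total space) yields a smooth family $\mathcal{X}\to B$ whose general fiber is the desired very general $X$ and whose central fiber is the Artin--Mumford threefold $X_0$, smooth projective with $H^3(X_0,\mathbb{Z})_{\mathrm{tors}}\neq 0$.

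Now I would invoke Theorem \ref{theodeg}(i): if the very general fiber $X_b$ admitted an integral Chow-theoretic decomposition of the diagonal, then the central fiber $X_0$ would admit at least a \emph{cohomological} decomposition of the diagonal (the role of the degeneration theorem is precisely to spread out a decomposition from the generic fiber, with a possible loss of the Chow structure but preservation of the cohomological one on any smooth projective model of the special fiber). Combined with the basic fact, recalled in the introduction and proved in \cite{voisinJAG}, that a smooth complex projective variety with nontrivial Artin--Mumford invariant cannot admit a cohomological decomposition of the diagonal, this forces a contradiction: $X_0$ is Artin--Mumford, so its $H^3$ torsion obstructs any such decomposition. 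Hence the very general $X_b$ does not admit an integral Chow-theoretic decomposition of the diagonal, and since existence of such a decomposition is a stable birational invariant (hence satisfied by $\mathbb{P}^n$), $X_b$ is not stably rational.

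The main obstacle is the degeneration/specialization step itself, that is, the construction of the family $\mathcal{X}\to B$ with the prescribed special fiber and the verification that small resolutions are available globally (possibly after base change), together with a careful check that the Artin--Mumford central fiber indeed can be reached as a specialization of a $k$-nodal general configuration for every $k\leq 7$. Once the family is in place, the application of Theorem \ref{theodeg} is essentially formal, and the rest is a direct appeal to the Artin--Mumford obstruction.
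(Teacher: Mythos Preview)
Your overall strategy---specialize to an Artin--Mumford double solid, use the torsion in $H^3$ to obstruct any decomposition of the diagonal there, and pull the obstruction back via Theorem~\ref{theodeg}---is exactly the paper's. But the way you set up the family is off, and this matters.

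You propose to perform small resolutions globally (after base change) so as to obtain a \emph{smooth} family $\mathcal{X}\to B$ with smooth central fiber $X_0$. This is neither what the paper does nor what Theorem~\ref{theodeg} is designed for. The whole point of Theorem~\ref{theodeg} is that the central fiber is allowed to have ordinary double points: one works with a family whose general fiber is smooth and whose special fiber is nodal, and the conclusion is about the \emph{blow-up} desingularization $\widetilde{\mathcal{X}}_o$ of that nodal fiber. Concretely (this is Lemma~\ref{tobeproved} in the paper), one blows up the $k$ sections of nodes in the family of $k$-nodal double solids; the general fiber becomes the smooth $\widetilde{X}_t$, while the central fiber becomes the \emph{partial} desingularization of the Artin--Mumford solid at $k$ of its ten nodes, retaining $10-k$ ordinary double points. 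Theorem~\ref{theodeg} then applies directly, with $\widetilde{\mathcal{X}}_o$ the full Artin--Mumford desingularization. Your global small-resolution step is unjustified (projectivity of small resolutions in families is delicate) and, more to the point, unnecessary.

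Two smaller corrections. First, your reading of Theorem~\ref{theodeg}(i) is inaccurate: it transports a \emph{Chow-theoretic} decomposition from the general fiber to a Chow-theoretic one on $\widetilde{\mathcal{X}}_o$, with no downgrade to cohomological; part~(ii) does the same for cohomological decompositions (under the extra hypothesis that the even cohomology of $\widetilde{\mathcal{X}}_o$ is algebraic, which holds here by \cite{voisinintegralhodge}). The paper in fact proves the stronger cohomological non-decomposition first and deduces the Chow statement a fortiori. Second, the step you flag as ``the main obstacle''---showing that the general $k$-nodal quartic double solid with $k\le 7$ actually specializes to an Artin--Mumford one---is indeed the technical heart of the argument (Lemma~\ref{tobeproved} and its Sublemmas), and requires a genuine analysis of the moduli of nodal quartic surfaces, not merely the existence of a pencil.
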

\begin{rema}\label{rematartive}{\rm The family of $6$ or $7$-nodal quartic double solids, or equivalently $6$ or $7$-nodal quartic
surfaces in $\mathbb{P}^3$,
 is not irreducible (see \cite{endrass}).
Observe however that there is an unique irreducible  component $M$
of the space $M'$ of nodal quartic surfaces in $\mathbb{P}^3$ with
$k\leq7$ nodes dominating $(\mathbb{P}^3)^{(k)}$ by the map
$\phi:M\rightarrow (\mathbb{P}^3)^{(k)}$ which to a $k$-nodal
quartic associates its set of nodes. This  is what we mean in the
Theorem above by ``general quartic double solid $X$ with $k\leq 7$
nodes''. Indeed, given a set $z$ of $k\leq7$ points in general
position, the set $M_z$ of quartics which are singular along $z$ is
a projective space. One easily checks that a generic point in this
projective space parameterizes a nodal quartic surface with exactly
$k$ nodes. Thus if $U$ is  the Zariski open set of
$(\mathbb{P}^3)^{(k)}$ consisting of points $z$ such that the
dimension of $M_z$ is minimal (in fact it is easily proved that this
dimension is the expected dimension $34-4k$), one finds that
$\phi^{-1}(U)$ is a Zariski open set in a projective bundle over
$U$, hence is irreducible. In fact, we also get results  for quartic
double solids with $8$ or $9$ nodes, but their parameter spaces
 are reducible and we do not know to which component of their parameter spaces our results
apply.}
\end{rema}
The case of seven nodes is particularly interesting, because the
intermediate Jacobian of $X$ in this case is of dimension $3$, hence
is as a principally polarized abelian variety  isomorphic to the
Jacobian of a curve. Thus the Clemens-Griffiths criterion for
rationality is satisfied in this case, and as a consequence, even
the irrationality of the very general $7$-nodal  double  solid
proved above was unknown. In the case of $k\leq 6$, the
irrationality of the very general $k$-nodal quartic double solid was
already known using the Clemens-Griffiths criterion, (this is proved
\cite{voisindoubleduke} for all smooth quartic double solids and we
refer to  \cite{cheltsov} for the nodal case,) but  its stable
irrationality was unknown.
\begin{rema}{\rm De Fernex and Fusi \cite{defernex} prove that in dimension $3$, rationality is stable under
specialization to nodal fibers. Thus, if rationality instead of stable rationality is considered,
the above statement concerning the irrationality of the very  general $k$-nodal double solid with
$k\leq7$ could be obtained by applying their argument. Note however that our degeneration theorem
(Theorem \ref{theodeg})
works in any dimension, while theirs works only in dimension $3$.
}
\end{rema}
\subsection{Cohomological decomposition of the diagonal and cycle-theoretic applications}
We wish to describe in more detail in this paragraph which kind of
information can be extracted from the non-existence of a
cohomological decomposition of the diagonal, as this provides more
explicit obstructions to stable rationality. A general reason why it
should be more restrictive than the rational decomposition
(\ref{decompintrocohN}) is the following : If the positive degree
cohomology $H^{*>0}(X,\mathbb{Q})$ has geometric coniveau $\geq1$,
that is, vanishes on the complement of a divisor $D\subset X$, a
property which is implied by the decomposition
(\ref{decompintrocohN}), the same is true for the positive degree
{\it integral}  cohomology $H^{*>0}(X,\mathbb{Z})$ (possibly for a
different divisor $D'$). Indeed, as noted in \cite{CTvoisin}, the
Bloch-Ogus  sheaves $\mathcal{H}^i(\mathbb{Z})$ on $X_{Zar}$
associated to the presheaves $U\mapsto H^i(U,\mathbb{Z})$ have no
$\mathbb{Z}$-torsion, as a consequence of the Bloch-Kato conjecture
proved by Voevodsky \cite{voe}. It follows from this that the class
$[\Delta_X]-[X\times x]$ vanishes on the Zariski open set $U\times
X$ of $X$, where $U:=X\setminus D'$.  This shows that assuming
(\ref{decompintrocohN}), one gets a decomposition
$$[\Delta_X]=\alpha+[X\times x]\,\,{\rm in}\,\, { H}^{2n}(X\times
X,\mathbb{Z})$$ where $\alpha$ is an integral cohomology class
supported on $D'\times X$. The decomposition (\ref{decompintrocoh})
is a much stronger statement  since it asks that the class $\alpha$
is the class of an algebraic cycle supported on $D'$.

The following example on the other hand shows that in the absence of
torsion in cohomology, we  need some transcendental cohomology in
order to prove  the non-existence of an integral cohomological
decomposition of the diagonal :
\begin{example} If $X$ is a smooth projective
variety such that $H^*(X,\mathbb{Z})$ has no torsion and is
algebraic, that is, is generated by classes of algebraic
subvarieties, then $X$ admits an integral cohomological
decomposition of the diagonal. This follows from the K\"unneth
decomposition which expresses the Betti cohomology class of the
diagonal as a $\mathbb{Z}$-linear combination of classes
$pr_1^*\alpha\smile pr_2^*\beta$, and from the fact that all
cohomology classes on $X$ are algebraic.

\end{example}

In the presence of non-trivial odd degree cohomology   (necessarily
of degree $\geq3$ if we already have a rational decomposition as in
(\ref{decompintrocohN})), the integral decomposition
(\ref{decompintrocoh}) becomes restrictive, even when the integral
cohomology is torsion free, as shows our theorem
\ref{theointrocoh30jui}. The obstruction we find is a sort of
secondary obstruction which we describe now. Recall that if
$H^3(X,\mathcal{O}_X)=0$, the intermediate Jacobian of $X$, defined
as the complex torus
\begin{eqnarray}
\label{defJX} J^3(X)=H^3(X,\mathbb{C})/(H^3(X,\mathbb{Z})\oplus
F^2H^3(X,\mathbb{C})),
\end{eqnarray}
is an abelian variety. Furthermore the generalized Hodge conjecture
\cite{groth} predicts that the Griffiths Abel-Jacobi map
$$AJ_X:{\rm CH}^2(X)_{hom}\rightarrow J^3(X)$$
is surjective. When  ${\rm CH}_0(X)=\mathbb{Z}$, this is proved by
Bloch and Srinivas \cite{blochsrinivas} who even  prove under this
assumption the much stronger result  that  $AJ_X:{\rm
CH}^2(X)_{hom}\rightarrow J^3(X)$ is  an isomorphism (see also
\cite{murre}). The situation is then similar to the case of
divisors, where we have the Abel-Jacobi isomorphism $AJ_X:{\rm
Pic}^0(X)\cong J^1(X)$ and where it is well-known  that there exists
a universal (or Poincar\'e) divisor $\mathcal{P}$ on $J^1(X)\times
X$  with the property that the morphism $\phi_{\mathcal{P}}:
J^1(X)\rightarrow J^1(X)$, which to $l\in J^1(X)$ associates
$AJ_X(\mathcal{D}_l)$, is the identity. Our second main result in
this paper is that, from this last viewpoint, codimension $\geq2$
cycles actually behave differently  than divisors, and as we will
see, this is related to the cohomological decomposition of the
diagonal. Let us make the following definition:
\begin{Defi} Let $X$ be  a smooth projective variety  such that
  $AJ_X: {\rm
CH}^2(X)_{hom}\rightarrow J^3(X)$ is an isomorphism (note that
$J^3(X)$ is then automatically an abelian variety).  We will say
that $X$  admits a universal codimension $2$ cycle if there exists a
codimension $2$ cycle $Z\in {\rm CH}^2(J^3(X)\times X)$ such that
$Z_{\mid a\times X}$ is homologous to $0$ for $a\in J^3(X)$ and the
morphism induced by the Abel-Jacobi map
$$\Phi_Z:J^3(X)\rightarrow J^3(X),\,a\mapsto AJ_X(Z_a),$$ is the
identity of $J^3(X)$.
\end{Defi}
\begin{rema}\label{remahodgeclass}
{\rm Note  that under the same assumption  $H^3(X,\mathcal{O}_X)=0$,  there is an integral Hodge class $\alpha$ of
degree $4$ on $J^3(X)\times X$ which is determined up to torsion by
the property that
$$\alpha_*:H_1(J^3(X),\mathbb{Z})\rightarrow
H^3(X,\mathbb{Z})/{\rm torsion}$$ is the canonical
isomorphism given by the definition (\ref{defJX}) of $J^3(X)$ as a complex torus and by the fact that
 $\alpha$ is of type $(1,3)$ in the
K\"unneth decomposition of $H^4(J^3(X)\times X,\mathbb{Z})$.

The rational cohomological decomposition of the diagonal
(\ref{decompintrocohN}) implies that $N\alpha$  (modulo torsion) is
algebraic (see  Lemma \ref{letardif30jui}  and
\cite{blochsrinivas}). So for a smooth complex projective variety
$X$ with  ${\rm CH}_0(X)=\mathbb{Z}$, saying that $X$ admits a
universal codimension $2$ cycle is equivalent to saying that this
degree $4$ {\it integral} Hodge class on $J^3(X)\times X$, which is
known to be $\mathbb{Q}$-algebraic, is actually algebraic (modulo
the torsion of $H^4(J^3(X)\times X,\mathbb{Z})$).}
\end{rema}

 We proved in \cite{voisinJAG} that if $X$ has dimension $3$, the
 integral cohomological decomposition
(\ref{decompintrocoh}) has the following consequences (consequence 1
is due to Bloch and Srinivas \cite{blochsrinivas} and  uses only the
rational cohomological decomposition (\ref{decompintrocohN})):

\begin{theo}  \label{theocitedeJAG} Assume a smooth projective
$3$-fold has an integral cohomological decomposition of the
diagonal. Then

\begin{enumerate}
\item \label{item1} $H^i(X,\mathcal{O}_X)=0$ for $i>0$.

\item \label{item2}   $H^*(X,\mathbb{Z})$ has no torsion.

\item \label{item3}   The even degree integral  cohomology of $X$ consists of classes
of algebraic cycles.

\item \label{item4}  There exists a universal codimension $2$ cycle $Z\in
{\rm CH}^2(J^3(X)\times X)$.
\end{enumerate}
Conversely, if \ref{item1} to \ref{item4} hold and furthermore the
following property holds:

\vspace{0.4cm}

 5. $J^3(X)$ has a $1$-cycle $z\in {\rm CH}^{g-1}(J^3(X))$
 whose cohomology class $[z]\in H^{2g-2}(J^3(X),\mathbb{Z})$ is the minimal class
$\frac{\theta^{g-1}}{(g-1)!}$, where $g:={\rm dim}\,J^3(X)$, and
$\theta\in H^2(J^3(X),\mathbb{Z})$ is the class of the natural
principal polarization on $J^3(X)$,

\vspace{0.4cm}

 then $X$
admits an integral cohomological decomposition of the diagonal.

\end{theo}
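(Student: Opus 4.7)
Given $[\Delta_X]=[Z_1]+[X\times x]$ with $Z_1$ supported on $D\times X$, I would begin by desingularizing $\tau:\tilde D\to D$ (with $\dim\tilde D\le n-1=2$) and writing $Z_1=(\tau\times\mathrm{id})_*\tilde Z_1$ for a cycle $\tilde Z_1$ on $\tilde D\times X$, so that $(Z_1)_*\alpha=(\tilde Z_1)_*(\tau^*\alpha)$ factors through the low-dimensional $\tilde D$. Item $(1)$ then follows from Bloch--Srinivas applied to the (weaker) rational decomposition implied by the integral one. For $(2)$, the integral decomposition means $[\Delta_X]-[X\times x]$ vanishes on $(X\setminus D)\times X$, and the torsion-freeness of the Bloch--Ogus sheaves $\mathcal H^i(\mathbb Z)$ (Colliot-Th\'el\`ene--Voisin, via Voevodsky's Bloch--Kato) upgrades this to torsion-freeness of $H^*(X,\mathbb Z)$. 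For $(3)$, a class $\alpha\in H^2(X,\mathbb Z)$ is of type $(1,1)$ by $(1)$ and hence algebraic by Lefschetz $(1,1)$ together with $(2)$; for $\alpha\in H^4(X,\mathbb Z)$, the class $\tau^*\alpha\in H^4(\tilde D,\mathbb Z)$ is a $\mathbb Z$-linear combination of point classes since $\dim\tilde D\le 2$, hence algebraic, so $\alpha=(\tilde Z_1)_*(\tau^*\alpha)$ is algebraic by functoriality of correspondences. For $(4)$, I interpret the $\dim\tilde D=2$ component of $\tilde Z_1$ as a family of $1$-cycles on $X$ parametrized by $\tilde D$; after subtracting a fibrewise reference of constant cohomology class I obtain an algebraic map $\phi:\tilde D\to J^3(X)$ via Abel--Jacobi, and the identity $(Z_1)_*|_{H^3(X)}=\mathrm{id}$, combined with the Bloch--Srinivas isomorphism $AJ_X:CH^2(X)_{\mathrm{hom}}\cong J^3(X)$, yields a universal cycle $Z\in CH^2(J^3(X)\times X)$ by pushing forward the normalized $\tilde Z_1$ along $(\phi,\mathrm{id}):\tilde D\times X\to J^3(X)\times X$.

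\textbf{Converse direction ($(1)$--$(5)\Rightarrow$ the decomposition).} From $(1)$ and $(2)$ one deduces $H^1(X,\mathbb Z)=H^5(X,\mathbb Z)=0$, so only the K\"unneth components of bidegree $(0,6),(2,4),(3,3),(4,2),(6,0)$ appear in the K\"unneth decomposition of $[\Delta_X]\in H^{2n}(X\times X,\mathbb Z)$. The $(0,6)$ component equals $[X\times x]$, and the remaining components should be represented by algebraic cycles supported on $D\times X$ for a common proper $D\subsetneq X$. The $(2,4),(4,2),(6,0)$ components are algebraic by $(3)$: choose algebraic representatives of bases of $H^{2k}(X,\mathbb Z)$ and of their Poincar\'e duals to obtain cycles of the form $\alpha\times\beta$ with $\alpha$ of codimension $\ge 1$, hence supported on a proper subvariety of the first factor. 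For the $(3,3)$ component I would form
\[
W:=(p_{X\times X})_*\bigl(p_{12}^*Z\cdot p_{13}^*Z\cdot p_J^*z\bigr)\in CH^n(X\times X),
\]
where $p_{12},p_{13}:J^3(X)\times X\times X\to J^3(X)\times X$ and $p_J,p_{X\times X}$ are the obvious projections. A direct K\"unneth calculation, using that $[Z]_{(1,3)}\in H^1(J^3(X))\otimes H^3(X)$ encodes the Abel--Jacobi isomorphism and that the pairing $(\phi,\psi)\mapsto\int_{J^3(X)}\phi\cup\psi\cup\theta^{g-1}/(g-1)!$ on $H^1(J^3(X))$ is exactly the polarization (which matches the intersection pairing on $H^3(X)$ via Abel--Jacobi), gives $[W]_{(3,3)}=\pm[\Delta_X]_{(3,3)}$; the spurious $(2,4)$ and $(4,2)$ K\"unneth components of $[W]$ are corrected by subtracting algebraic representatives already available from $(3)$. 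Since $W$ is supported on $S\times X$ with $S=\bigcup_{a\in\mathrm{supp}(z)}Z_a\subsetneq X$ a surface swept by the fibres of $Z$ over $\mathrm{supp}(z)$, collecting all the algebraic representatives over a common proper $D\subsetneq X$ yields the desired $Z_1$ with $[Z_1]=[\Delta_X]-[X\times x]$.

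\textbf{Main obstacle.} The hardest point in the direct direction is the Chow-theoretic descent needed in $(4)$: the cohomological identity $(Z_1)_*|_{H^3}=\mathrm{id}$ does not immediately produce an algebraic cycle on $J^3(X)\times X$, and one must carry out a careful descent along $\phi:\tilde D\to J^3(X)$ whose fibres are positive-dimensional in general. In the converse direction, the crux is the K\"unneth/sign computation showing $[W]_{(3,3)}=\pm[\Delta_X]_{(3,3)}$: one must correctly match the principal polarization on $J^3(X)$ with the intersection pairing on $H^3(X)$ and track the signs arising from graded commutativity in the product of the two pull-backs of $Z$.
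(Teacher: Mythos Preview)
This theorem is quoted from \cite{voisinJAG} and not proved in the present paper, so there is no in-text argument to compare against. Your converse sketch is essentially the approach of \cite{voisinJAG} and is correct: once one notes that only the K\"unneth pieces of $[Z]$ whose $J^3(X)$-degrees sum to $2$ survive the pushforward defining $W$ (so that $[W]$ carries no $(0,6)$ or $(6,0)$ component to worry about), the identity $[W]_{(3,3)}=[\Delta_X]_{(3,3)}$ and the support claim go through. In the forward direction, items (1) and (3) are fine, but (2) and (4) have problems.

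Your argument for (2) is a non sequitur: that $[\Delta_X]-[X\times x]$ restricts to zero on $(X\setminus D)\times X$ is nothing more than the support hypothesis on $Z_1$, and the torsion-freeness of the Bloch--Ogus sheaves $\mathcal H^i(\mathbb Z)$ has no bearing on torsion in $H^*(X,\mathbb Z)$ from this alone (the passage in the introduction you seem to have in mind goes the other way, upgrading a \emph{rational} decomposition to an integral vanishing on an open set, which here is already given). The correct one-line argument uses the transpose factorisation $Z_1^{*}=\tau_*\circ\tilde Z_1^{*}$: for $\alpha\in H^3(X,\mathbb Z)_{\rm tors}$ one has $\tilde Z_1^{*}\alpha\in H^{1}(\tilde D,\mathbb Z)_{\rm tors}=0$ since $H^1$ of any smooth variety is torsion-free, hence $\alpha=Z_1^{*}\alpha=0$; the remaining degrees follow by Poincar\'e duality and universal coefficients. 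The genuine gap is in (4): your cycle $(\phi,\mathrm{id})_*\tilde Z_1$ lies in ${\rm CH}_3(J^3(X)\times X)={\rm CH}^{g}(J^3(X)\times X)$, not in ${\rm CH}^2$, so for $g\ge 3$ it is not even a candidate for the universal cycle. You rightly flag the descent along $\phi$ as the crux but do not supply it. The missing input is that $\tau_*\circ\tilde Z_1^{*}=\mathrm{id}_{H^3(X,\mathbb Z)}$ is an \emph{integral} splitting of Hodge structures (both $\tilde Z_1^{*}$ and the Gysin map $\tau_*$ being morphisms of Hodge structures); this makes the canonical degree-$4$ Hodge class on $J^3(X)\times X$ pull back along $(\mathrm{id}_{J^3(X)},\tau)$ to a Hodge class on $J^3(X)\times\tilde D$ with $\tilde D$ a \emph{surface}, where algebraicity can be established (via Lefschetz $(1,1)$ and the Poincar\'e bundle on ${\rm Pic}^0(\tilde D)\times\tilde D$) and then transported back through the correspondence $\tilde Z_1$ to produce an honest codimension-$2$ cycle on $J^3(X)\times X$. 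A naive pushforward of $\tilde Z_1$ itself cannot do this.
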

\begin{rema}{\rm More recently, we proved in \cite{voisincubic} that Property 5
above is also implied by the existence of a cohomological
decomposition of the diagonal, but we will not need this result
here. }
\end{rema}
 If we consider a rationally connected threefold $X$,
\ref{item1} and \ref{item3} are always satisfied (\ref{item1} is
standard and \ref{item3} is proved in \cite{voisinintegralhodge}).
The  Artin-Mumford example \cite{artinmumford} shows that
\ref{item2} does not always hold even for unirational threefolds and
this provides by the theorem above an obstruction to the existence
of an integral cohomological decomposition of the diagonal.

Property \ref{item4} is much harder to analyze and it is still
unknown if it is satisfied for the general cubic threefold (this
result is claimed in \cite{xuze} but  the proof is incorrect, as
there is a missing term in  the formula used for ${\rm
ch}(\mathcal{O}_X(1))$ in the proof of Theorem 2.3 of {\it loc.
cit.}). We prove in \cite{voisincubic} that it is satisfied by
``many'' smooth cubic threefolds, (more precisely, there is a
countable union of subvarieties of codimension $\leq 3$ in the
moduli space of cubic threefolds parameterizing cubic threefolds
satisfying property \ref{item4}). One of our main results in this
paper is the fact that Property \ref{item4} can fail for some
unirational threefolds. In fact, we will exhibit unirational
threefolds $X$ for which property \ref{item2} (the Artin-Mumford
criterion) and property 5  of Theorem \ref{theocitedeJAG} are
satisfied, but not admitting an integral cohomological decomposition
of the diagonal.
 Such a threefold $X$ does not
admit a universal codimension $2$ cycle by Theorem
\ref{theocitedeJAG}.

\begin{theo}\label{theointrocoh30jui} (Cf. Theorem \ref{theocoro}
and Theorem \ref{theotrue}.)  (i) Let $X$ be the desingularization
of a very general quartic double solid with at most seven nodes.
Then $X$ does not admit an integral cohomological decomposition of
the diagonal.

(ii) If $X$ is the desingularization of a very general quartic
double solid with exactly seven nodes, then $X$ does not admit a
universal codimension $2$ cycle $Z\in {\rm CH}^2(J^3(X)\times X)$.

\end{theo}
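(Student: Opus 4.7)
The plan is to establish (i) by a specialisation (degeneration) argument anchored at the Artin--Mumford threefold, and then to deduce (ii) from (i) by invoking the converse direction of Theorem~\ref{theocitedeJAG}.

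For part (i), I would use the cohomological part of the degeneration theorem (Theorem~\ref{theodeg}). The starting point is that the Artin--Mumford threefold, obtained as the desingularisation of a suitably special $10$-nodal quartic double solid, has nontrivial $2$-torsion in $H^{3}(-,\mathbb{Z})$ by \cite{artinmumford}; as recalled in the introduction, the Bloch--Ogus argument from \cite{voisinJAG} then shows that it does \emph{not} admit an integral cohomological decomposition of the diagonal. It thus suffices to place an Artin--Mumford example in the closure of the irreducible component $M$ of Remark~\ref{rematartive}. Concretely, I would choose a one-parameter family of quartic surfaces whose generic fibre is a very general member of $M$ with $k\leq 7$ nodes and whose special fibre is an Artin--Mumford quartic (acquiring $10-k$ extra nodes along the way), and then form the associated family of double solids and their simultaneous resolutions. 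Applying Theorem~\ref{theodeg}(ii) to this family, the failure of the cohomological decomposition of the diagonal specialises from the central fibre to the very general fibre, yielding (i).

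For part (ii), let $X$ be the desingularisation of a very general $7$-nodal quartic double solid in the component $M$. I would verify hypotheses \ref{item1}, \ref{item2}, \ref{item3} and the property~5 of Theorem~\ref{theocitedeJAG}. Items \ref{item1} and \ref{item3} hold because $X$ is rationally connected: the vanishing $H^{i}(X,\mathcal{O}_{X})=0$ for $i>0$ is standard for rationally connected threefolds, and the algebraicity of integral even-degree cohomology is the integral Hodge conjecture for rationally connected threefolds proved in \cite{voisinintegralhodge}. Item \ref{item2} is part of the assumptions recalled in the excerpt: for at most seven nodes in general position, $H^{*}(X,\mathbb{Z})$ is torsion free. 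For property~5, the intermediate Jacobian $J^{3}(X)$ is a principally polarised abelian threefold (dimension $g=3$), hence it is the Jacobian $JC$ of a smooth genus $3$ curve $C$; by Poincar\'e's formula, the Abel--Jacobi image of $C$ in $JC$ represents exactly the minimal class $\theta^{g-1}/(g-1)!=\theta^{2}/2$. Thus if $X$ admitted a universal codimension $2$ cycle (item \ref{item4}), all five hypotheses of the converse part of Theorem~\ref{theocitedeJAG} would be satisfied, so $X$ would admit an integral cohomological decomposition of the diagonal, contradicting part (i). Consequently no universal codimension $2$ cycle can exist on $X$.

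The main obstacle I expect is the degeneration step in (i): one must construct the family $\mathcal{Y}\to B$ carefully enough (with a flat model of the branch quartic degenerating to an Artin--Mumford one, a controlled simultaneous resolution, and the right behaviour of the singular locus) for the hypotheses of Theorem~\ref{theodeg} to apply, and one must check that the Artin--Mumford point indeed lies in the closure of the chosen component $M$ parametrising nodal quartics with nodes in general position. Everything else is a direct application of the theorems quoted in the introduction.
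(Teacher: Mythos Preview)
Your proposal is correct and follows essentially the same route as the paper: part (i) is Theorem \ref{theocoro}, proved by degenerating to the Artin--Mumford solid (Lemma \ref{tobeproved}) and applying Theorem \ref{theodeg}(ii) to the family of partial desingularisations (the paper also checks the extra hypothesis there---algebraicity of the even integral homology of $\widetilde{\mathcal X}_o$---via \cite{voisinintegralhodge}), and part (ii) is exactly the contrapositive of the converse in Theorem \ref{theocitedeJAG} after verifying items \ref{item1}--\ref{item3} and property~5. The obstacle you single out---placing the Artin--Mumford quartic in the closure of the component $M$---is precisely the content of Lemma \ref{tobeproved} and its two sublemmas, which is where the paper concentrates its effort.
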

 Concerning property 5, that is, the question
whether $J^3(X)$ has a $1$-cycle in the minimal class
$\frac{\theta^{g-1}}{(g-1)!}$, this is a very classical completely
open question for most rationally connected threefolds (in
particular the cubic threefold, for which we prove in
\cite{voisincubic} that a positive answer to this question is
equivalent to the fact that the cubic threefold has universally
trivial ${\rm CH}_0$ group) but also for very general principally
polarized abelian varieties of dimension $\geq4$. The
Clemens-Griffiths criterion for rationality \cite{clemensgriffiths}
states that if a smooth projective threefold is rational, its
intermediate Jacobian is isomorphic as a principally polarized
abelian variety to a product of Jacobians of smooth curves. By the
Matsusaka characterization of products of Jacobians
\cite{matsusaka}, another way to state the Clemens-Griffiths
criterion is to say that there exists an {\it effective} $1$-cycle
in $J^3(X)$ (that is, a combination with positive coefficients of
curves in $J^3(X)$) whose cohomology class is
 the minimal class $\frac{\theta^{g-1}}{(g-1)!}$. This condition   is much more restrictive geometrically
 than Property 5 above. In
particular cases, it can be solved negatively for a general $X$ by a
dimension count for the number of parameters for Jacobians of
curves, or for a specific $X$ by the precise study of the geometry
of the Theta divisor. This cannot be done with the question of the
existence of a $1$-cycle in the minimal class  that is not
necessarily effective. We will not be concerned by this problem
however, since in the examples we will analyze closely, namely the
$7$-nodal quartic double solids, we will have ${\rm dim }\,J^3(X)=
3$ so $J^3(X)$ will automatically satisfy the Clemens-Griffiths
criterion. Together with the triviality of the Artin-Mumford
invariant, this allows us to entirely focus on property \ref{item4},
which by Theorem \ref{theocitedeJAG} above is the only obstruction
to the existence of a cohomological decomposition of the diagonal.

 Our next  result (Theorem
\ref{theounramintro})  will relate the non-existence of a universal codimension
$2$ cycle  to the vanishing of the
{\it universal} third unramified cohomology with torsion coefficients introduced in \cite{auelCT}.
Unramified cohomology with torsion coefficients has been used by
Colliot-Th\'{e}l\`{e}ne and Ojanguren \cite{CToj} as a powerful tool to
detect irrationality (see also \cite{peyre}). In the paper
\cite{auelCT}, the authors  introduce the notion of ``universal
triviality of the third unramified cohomology group of $X$''.  We
just sketch here the idea and refer to \cite{auelCT} for more
details. The universal triviality of the third unramified cohomology
group of $X$ with torsion coefficients is equivalent to the fact
that for any smooth quasi-projective variety $U$ and any class
$\alpha\in H^3_{nr}(U\times X,\mathbb{Q}/\mathbb{Z})$, there is a
Zariski dense open set $U'\subset U$ such that $\alpha_{\mid
U'\times X}$ is the pull-back of a class $\beta\in
H^3(U',\mathbb{Q}/\mathbb{Z})$. In the paper \cite{auelCT}, the
notion is more elegantly formulated since the authors can use the
\'etale cohomology of the variety $X_K$ where $K$ is the function
field of $U$, but in the context of Betti cohomology, we have to
formulate it by taking the direct limit over Zariski open sets. In
any case, the notion is obviously particularly interesting for those
varieties with vanishing third unramified cohomology group with
torsion coefficients, as it is the case for rationally connected
threefolds (see \cite{CTvoisin}).

\begin{theo}\label{theounramintro} (Cf. Theorem \ref{theounram}) Let $X$ be a smooth complex projective variety of dimension
$n$ with ${\rm CH}_0(X)=\mathbb{Z}$. Assume
\begin{enumerate}
\item \label{hytorintro} $H^*(X,\mathbb{Z})$ has no torsion and the K\"unneth components of the diagonal are algebraic.
\item \label{hy3intro} The group $H^3_{nr}(X,\mathbb{Q}/\mathbb{Z})$ is trivial (or equivalently by \cite{CTvoisin}, the integral Hodge classes of degree $4$ on $X$ are algebraic).
\end{enumerate}
Then the degree $3$ unramified cohomology of $X$ with torsion
coefficients is universally trivial if and only if there is a
universal codimension $2$ cycle $Z\in {\rm CH}^2(J^3(X)\times X)$.
\end{theo}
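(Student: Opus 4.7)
The plan is to regard both conditions in the theorem as obstructions to the integral algebraicity of a single canonical Hodge class. Under the standing hypotheses, $\mathrm{CH}_0(X)=\mathbb{Z}$ together with the algebraicity of the K\"unneth components of $\Delta_X$ forces, via the Bloch--Srinivas argument summarized in Remark \ref{remahodgeclass}, the integral Hodge class $\alpha\in H^4(J^3(X)\times X,\mathbb{Z})$ to be $\mathbb{Q}$-algebraic: some multiple $N\alpha$ is represented by an algebraic cycle. By construction, the existence of a universal codimension $2$ cycle is equivalent to $\alpha$ being integrally algebraic modulo the torsion of $H^4(J^3(X)\times X,\mathbb{Z})$. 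The theorem therefore reduces to identifying the integral defect of algebraicity of $\alpha$ with the universal nontriviality of $H^3_{nr}(X,\mathbb{Q}/\mathbb{Z})$.

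For the implication ``universal cycle $\Rightarrow$ universal triviality'', the universal cycle $Z$ furnishes an integral cycle-class representative of $\alpha$ on $J^3(X)\times X$, and hence, by pullback along any morphism $U\to J^3(X)$, a uniform recipe for realizing elements of $J^3(X_K)$ as Abel--Jacobi classes of codimension $2$ cycles on $X_K$ for $K=\mathbb{C}(U)$. Using the Bloch--Ogus identification (as developed in \cite{CTvoisin}) of $H^3_{nr}(X_K,\mathbb{Q}/\mathbb{Z})$ as controlling the obstruction to lifting degree-$4$ integral Hodge-type classes on $X_K$ to algebraic cycles, coupled with hypothesis 2 over the base point, one shows that any $\beta\in H^3_{nr}(U\times X,\mathbb{Q}/\mathbb{Z})$ has vanishing generic fiber on $X_K$, and hence descends, after restricting to some open $U'\subset U$, to a class pulled back from $U'$.

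For the converse, apply universal triviality with $U=J^3(X)$. Starting from an integral cycle $W$ with $[W]=N\alpha$ in $H^4(J^3(X)\times X,\mathbb{Z})$, the class $W/N-\alpha$ defines, via the boundary map in the long exact sequence relating integral Hodge classes, algebraic cycles, and unramified cohomology which is the content of \cite{CTvoisin}, a canonical element $\xi\in H^3_{nr}(J^3(X)\times X,\mathbb{Q}/\mathbb{Z})$. Universal triviality gives $\xi|_{U'\times X}=p_1^*\tilde\xi$ for some open $U'\subset J^3(X)$ and some $\tilde\xi\in H^3(U',\mathbb{Q}/\mathbb{Z})$. Restricting to a general fiber $\{a\}\times X$ kills $p_1^*\tilde\xi$ and, combined with hypothesis 2 (so that $H^3_{nr}(X,\mathbb{Q}/\mathbb{Z})=0$), forces $\xi|_{U'\times X}=0$. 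This produces an integral algebraic representative of $\alpha$ on $U'\times X$, which extends to all of $J^3(X)\times X$ because codimension $2$ cycles extend across loci of codimension $\geq 2$ and the Abel--Jacobi compatibility $\Phi_Z=\mathrm{id}$ is a Zariski-open condition already verified on $U'$.

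The hard part is the careful construction of the obstruction class $\xi$: one must show that it is genuinely unramified on the product $J^3(X)\times X$ rather than merely an element of $H^3(J^3(X)\times X,\mathbb{Q}/\mathbb{Z})$, and that its pullback-from-base behavior really does force integral algebraicity of $\alpha$ (and not merely, say, of $2\alpha$). This relies on the torsion-freeness of the Bloch--Ogus sheaves $\mathcal{H}^i(\mathbb{Z})$ via Voevodsky's proof of the Bloch--Kato conjecture \cite{voe}, and on a delicate specialization argument converting the ``constant in the $X$-direction'' property of $\xi$ on $U'\times X$ into the actual vanishing of the integral algebraicity obstruction for $\alpha$ over $J^3(X)\times X$.
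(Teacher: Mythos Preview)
Your overall architecture---centering everything on the canonical class $\alpha\in H^4(J^3(X)\times X,\mathbb{Z})$ and the exact sequence
\[
0\to H^3_{nr}(Y,\mathbb{Z})\otimes\mathbb{Q}/\mathbb{Z}\to H^3_{nr}(Y,\mathbb{Q}/\mathbb{Z})\to \mathrm{Tors}\bigl(H^4(Y,\mathbb{Z})/H^4(Y,\mathbb{Z})_{alg}\bigr)\to 0
\]
from \cite{CTvoisin}---is the same as the paper's. But two of your key steps are genuine gaps, not merely sketchy.

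\textbf{The step ``$\xi|_{U'\times X}=0$'' is unjustified.} You have $\xi|_{U'\times X}=p_1^*\tilde\xi$ by universal triviality, and you observe that restricting to a fiber $\{a\}\times X$ kills $p_1^*\tilde\xi$. But this does \emph{not} force $\tilde\xi=0$: any nonzero $\tilde\xi\in H^3(U',\mathbb{Q}/\mathbb{Z})$ pulls back to a nonzero class on $U'\times X$ that restricts to zero on every fiber. What you actually need is that the image of $\xi$ in $\mathrm{Tors}(H^4(U'\times X)/H^4_{alg})$ lies in the image of $p_1^*H^4(U',\mathbb{Z})_{tors}$, and then to exploit that $\alpha$ is of K\"unneth type $(1,3)$ while anything pulled back from the base is of type $(4,0)$. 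The paper does exactly this: writing $\alpha|_{U'\times X}=a+p_1^*b$ with $a$ algebraic and $b$ torsion, projecting onto the $(1,3)$ K\"unneth component (which preserves algebraicity, by hypothesis~1) kills $p_1^*b$ and shows $\alpha|_{U'\times X}$ itself is algebraic. Your fiber-restriction argument cannot replace this K\"unneth step.

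\textbf{The extension step is wrong as stated.} You write that an algebraic representative of $\alpha$ on $U'\times X$ ``extends to all of $J^3(X)\times X$ because codimension $2$ cycles extend across loci of codimension $\geq 2$''. But $J^3(X)\setminus U'$ is typically a divisor, so its preimage in $J^3(X)\times X$ has codimension $1$, not $\geq 2$. Cycles of course extend by closure, but there is no reason the closure has class $\alpha$. The paper's fix is again K\"unneth: write $\alpha=[Z]+\alpha_2$ on $J^3(X)\times X$ with $\alpha_2$ supported on $D\times X$, observe that the $(1,3)$-component of any class supported on $D\times X$ vanishes, and conclude that the $(1,3)$-component of $[Z]$ equals $\alpha$.

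\textbf{The forward direction is too thin.} Your claim that ``$\beta$ has vanishing generic fiber on $X_K$, and hence descends to a class pulled back from $U'$'' hides essentially all the content. Vanishing on the generic fiber does not by itself give descent. The paper splits the problem via the exact sequence above: first it shows $p_1^*:H^3_{nr}(B,\mathbb{Z})\to H^3_{nr}(B\times X,\mathbb{Z})$ is an isomorphism (using the rational decomposition of the diagonal and torsion-freeness of the $\mathcal{H}^i(\mathbb{Z})$), reducing to the torsion quotient; then it runs a K\"unneth analysis of the degree-$4$ class, handling the $(4,0)$, $(2,2)$, $(0,4)$ components by separate arguments and the $(1,3)$ component by producing a morphism $\overline{B}\to J^3(X)$ along which the universal cycle pulls back. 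Your sketch mentions morphisms $U\to J^3(X)$ but never constructs one from the data of $\beta$, and never addresses the other K\"unneth pieces or the integral $H^3_{nr}$ part of the sequence.
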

 These hypotheses apply to any rationally
connected threefold with no torsion in $H^3$ (see Corollary
\ref{corofififigeneral}), and also to cubic fourfolds, proving in
particular the universal triviality of the third unramified
cohomology group with torsion coefficients of any smooth cubic
fourfold (see Example \ref{exacubic}).

\begin{coro} \label{cornew30juill} If $X$ is as in Theorem \ref{theointrocoh30jui}, (ii), the universal third unramified
cohomology group of $X$ with coefficients in $\mathbb{Q}/\mathbb{Z}$
is not universally trivial.
\end{coro}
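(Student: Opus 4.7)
\textbf{Proof plan for Corollary \ref{cornew30juill}.} The strategy is to read Theorem \ref{theounramintro} in its contrapositive direction and to feed in Theorem \ref{theointrocoh30jui}(ii). Write $X$ for the desingularization of a very general $7$-nodal quartic double solid. We need to check that $X$ satisfies the two standing hypotheses (\ref{hytorintro}) and (\ref{hy3intro}) of Theorem \ref{theounramintro}, after which the corollary is formal.

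For hypothesis (\ref{hytorintro}), $X$ is unirational (see \cite{beauville}), hence rationally connected; the assumption that the $7$ nodes are in general position guarantees that $H^\ast(X,\mathbb{Z})$ has no torsion, as recalled in Section \ref{sec01} (eight- or nine-nodal configurations would bring Artin--Mumford-type torsion and block the argument). The algebraicity of the K\"unneth components of $\Delta_X$ is then exactly the content of the remark made immediately after Theorem \ref{theounramintro}, namely that its hypotheses hold for \emph{any} rationally connected threefold with torsion-free $H^3$; this is the statement of Corollary \ref{corofififigeneral} applied to our $X$. For hypothesis (\ref{hy3intro}), the integral Hodge conjecture for codimension $2$ cycles is known on any smooth rationally connected threefold by \cite{voisinintegralhodge}, and by \cite{CTvoisin} this is equivalent to the vanishing of $H^3_{nr}(X,\mathbb{Q}/\mathbb{Z})$.

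Granted both hypotheses, Theorem \ref{theounramintro} asserts that the universal triviality of $H^3_{nr}(X,\mathbb{Q}/\mathbb{Z})$ is equivalent to the existence of a universal codimension $2$ cycle $Z\in\mathrm{CH}^2(J^3(X)\times X)$. But Theorem \ref{theointrocoh30jui}(ii) precisely forbids such a cycle on this $X$. Consequently the universal third unramified cohomology group of $X$ with $\mathbb{Q}/\mathbb{Z}$-coefficients fails to be universally trivial, which is the assertion of the corollary.

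The corollary is a pure assembly of two earlier results, so it carries no independent technical obstacle; the genuine difficulty has been packaged into Theorem \ref{theointrocoh30jui}(ii), whose proof rests on the degeneration/specialization method and on the fine analysis of the Abel--Jacobi map for $X$ together with the minimal-class condition (Property~5 of Theorem \ref{theocitedeJAG}), automatic here since $\dim J^3(X)=3$.
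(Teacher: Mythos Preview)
Your proof is correct and follows essentially the same route as the paper's: verify that $X$ is a rationally connected threefold with torsion-free $H^3$ so that Corollary \ref{corofififigeneral} (equivalently, the hypotheses of Theorem \ref{theounramintro}) applies, then invoke Theorem \ref{theointrocoh30jui}(ii) to rule out a universal codimension $2$ cycle. One inessential slip: your parenthetical that ``eight- or nine-nodal configurations would bring Artin--Mumford-type torsion'' is inaccurate---Endrass \cite{endrass} shows there is no $2$-torsion for fewer than $10$ nodes, and the Artin--Mumford example itself has $10$ nodes---but this aside plays no role in the argument.
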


\subsection{A geometric application}

We conclude this paper with a result of a more geometric nature. It
concerns the following weaker version (*) of property \ref{item4} of
Theorem \ref{theocitedeJAG},
 originating in work of de Jong and Starr (see \cite{dJscubic4}),
 which
 is very natural if one thinks of  the geometry of the Abel map for
 $0$-cycles on curves:

\vspace{0.5cm}

(*)  {\it There exist a smooth projective variety $B$ and a
codimension $2$ cycle $Z\in {\rm CH}^2(B\times X)$ inducing a
surjective map
$$\Phi_Z:B\rightarrow J^3(X),\,b\mapsto  AJ_X(Z_b)$$
with rationally connected general fibers.}

\vspace{0.5cm}

Property (*)  is  satisfied by cubic threefolds (see
\cite{tikhomarku}). In \cite{voisinJAG}, we observed that (*)
  has already very nice consequences
on the integral Hodge conjecture (for instance, if $X$ is rationally
connected and  $H^3(X,\mathbb{Z})$ has no torsion,  property (*)
implies that the integral Hodge conjecture holds for products
$C\times X$, where $C$ is a smooth curve).

 Coming back to Theorem \ref{theointrocoh30jui}, (ii), we will even prove a stronger
statement (see Theorem \ref{theotrue}), namely that in the situation
and with the notation above, $X$ does not satisfy property (*). This
in particular answers negatively the following question originally
asked
 by  Harris, de Jong and Starr, solved positively
 for  the intersection of two quadrics \cite{castravet} and for many degrees for the cubic
threefold \cite{tikhomarku}, \cite{hrs}, \cite{voisinJAG}:

\vspace{0.5cm}

{\bf Question}: {\it Let $X$ be a rationally connected threefold. Is
it true that the Abel-Jacobi map on the main component of the family
of rational curves of sufficiently positive class has rationally
connected general fiber}?

\vspace{0.5cm}

 (Note that
the paper \cite{dJscubic4} exhibits  a totally different behaviour
for rational curves on cubic fourfolds. Note also that Castravet
\cite{castravet} gives examples of $X$ as above, with
$H_2(X,\mathbb{Z})=\mathbb{Z}$ and for which the family of free
rational curves of degree $d$ is reducible for all degrees $d$, the
general point of the ``main component'' parameterizing  a very free
rational curve.)

The method of the proof of Theorems \ref{theointrochow} and
\ref{theointrocoh30jui}, (i), is by degeneration of the general
quartic double solid $X_t$, (or the general $k$-nodal double solid
$X_t$,) to the Artin-Mumford nodal quartic double solid $X_0$. Our
general result proved in Section \ref{sec1} is the general
specialization theorem \ref{theodeg} implying in our case that the
non-existence of a decomposition of the diagonal (Chow-theoretic or
cohomological) for a desingularization of $X_0$ implies the
non-existence also for the very general smooth double solid $X_t$,
or for the desingularization $\widetilde{X}_t$ of a very general
double solid with $k\leq7$ nodes. The interesting fact in this
degeneration argument is the following: the desingularization of the
Artin-Mumford double solid does not admit an integral cohomological
decomposition of the diagonal because it has some nontrivial torsion
in its integral cohomology. The desingularization of the general
$k\leq7$-nodal double solid then does not admit  an integral
cohomological decomposition of the diagonal, but it has no torsion
anymore in its integral cohomology. The non-existence of integral
cohomological decomposition of the diagonal thus implies that
another property from \ref{item1} to $5$ in Theorem
\ref{theocitedeJAG} must be violated, and when $k=7$, the only one
which can be violated is the existence of a universal codimension
$2$ cycle.

\vspace{0.5cm}

{\bf Thanks.}  I thank  Jean-Louis Colliot-Th\'el\`ene  for sending
me the very interesting paper \cite{auelCT}, for inspiring
discussions related to it and for his criticism on the exposition. I also thank the anonymous referee for his very helpful  criticism and suggestions.

\section{A degeneration argument\label{sec1}}
We prove in this section the following  degeneration (or
specialization) result.
\begin{theo}\label{theodeg} Let $\pi:\mathcal{X}\rightarrow B$ be a flat  projective
morphism of relative dimension $n\geq2$, where  $B$ is a smooth curve \footnote{I thank J.-L.
Colliot-Th\'el\`ene for bringing to my attention the fact that the
smoothness assumption I had originally put on $\mathcal{X}$  was in
fact not used in the proof.}. Assume that the fiber $\mathcal{X}_t$
is smooth for $t\not=0$, and has at worst ordinary quadratic
singularities for $t=0$. Then

(i) If for general $t\in B$, $\mathcal{X}_t$ admits a Chow theoretic
decomposition of the diagonal (equivalently, $CH_0(\mathcal{X}_t)$
is universally trivial), the same is true for any smooth projective
model $\widetilde{\mathcal{X}}_o$ of $\mathcal{X}_o$.

(ii) If for general $t\in B$, $\mathcal{X}_t$ admits a cohomological
decomposition of the diagonal, and the even degree integral homology
of a smooth projective model $\widetilde{\mathcal{X}}_o$ of
$\mathcal{X}_o$ is algebraic (i.e. generated over $\mathbb{Z}$ by
classes of subvarieties),  $\widetilde{\mathcal{X}}_o$ also admits a
cohomological decomposition of the diagonal.
\end{theo}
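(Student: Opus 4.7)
The strategy is to spread out the decomposition on a general fibre to a family of cycles over a Zariski open $U\subset B$, take closures in $\mathcal{X}\times_{B}\mathcal{X}$, specialize to the central fibre via the localization exact sequence combined with triviality of the normal bundle of $\mathcal{X}_{0}$ in $\mathcal{X}$, and finally transfer from the singular $\mathcal{X}_{0}$ to its smooth model $\widetilde{\mathcal{X}}_{0}$ using the resolution $\tau:\widetilde{\mathcal{X}}_{0}\to\mathcal{X}_{0}$, whose exceptional locus $E=\bigsqcup_{i}Q_{i}$ is a disjoint union of smooth rational quadrics of dimension $n-1$.

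For part (i), I would begin by spreading out the decomposition from a general $t\in B$ (equivalently, from the geometric generic point, possibly after replacing $B$ by an \'etale neighborhood of $0$) to cycles $Z_{1},Z_{2}\in{\rm CH}^{n}(\mathcal{X}_{U}\times_{U}\mathcal{X}_{U})$ with $\Delta_{\mathcal{X}_{U}/U}=Z_{1}+Z_{2}$, $Z_{1}$ supported on $\mathcal{D}\times_{U}\mathcal{X}_{U}$ for some $\mathcal{D}\varsubsetneq\mathcal{X}_{U}$, and $Z_{2}=\mathcal{X}_{U}\times_{U}\sigma(U)$ for a section $\sigma$. Taking Zariski closures $\overline{Z}_{i}\subset\mathcal{X}\times_{B}\mathcal{X}$ and applying the localization sequence for $i:\mathcal{X}_{0}\times\mathcal{X}_{0}\hookrightarrow\mathcal{X}\times_{B}\mathcal{X}$, the difference $\Delta_{\mathcal{X}/B}-\overline{Z}_{1}-\overline{Z}_{2}$ equals $i_{*}\varepsilon$ for some $\varepsilon\in{\rm CH}^{n-1}(\mathcal{X}_{0}\times\mathcal{X}_{0})$; since the normal bundle of $\mathcal{X}_{0}\times\mathcal{X}_{0}$ in $\mathcal{X}\times_{B}\mathcal{X}$ is trivial (pulled back from $T_{0}B$), the Gysin composition $i^{*}i_{*}\varepsilon$ vanishes, giving
$$\Delta_{\mathcal{X}_{0}}=Z_{1,0}+\mathcal{X}_{0}\times\{x_{0}\}\,\,\text{in}\,\,{\rm CH}^{n}(\mathcal{X}_{0}\times\mathcal{X}_{0}),$$
with $Z_{1,0}$ supported on $D_{0}\times\mathcal{X}_{0}$, $D_{0}\varsubsetneq\mathcal{X}_{0}$. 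To transfer this decomposition to $\widetilde{\mathcal{X}}_{0}$ I argue via the equivalent formulation ``Chow decomposition $=$ universal triviality of ${\rm CH}_{0}$'': for $L\supset\mathbb{C}$ algebraically closed, any two $L$-points of $\widetilde{\mathcal{X}}_{0}$ either lie on a single exceptional quadric $Q_{i}$ (hence are rationally equivalent by rational connectedness of $Q_{i}$) or can be moved within each $\tau$-fibre to lie over smooth points of $\mathcal{X}_{0,L}$, whose rational equivalence provided by the decomposition lifts to $\widetilde{\mathcal{X}}_{0,L}$ via the proper transform of the witnessing curve, with any residual exceptional contributions again collapsed by rational connectedness.

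For part (ii), the same spread-and-specialize argument, done in integral cycle classes via Gysin restriction of Borel--Moore classes from the smooth nearby fibre, yields
$$[\Delta_{\mathcal{X}_{0}}]=[Z_{1,0}]+[Z_{2,0}]\,\,\text{in}\,\,H_{2n}(\mathcal{X}_{0}\times\mathcal{X}_{0},\mathbb{Z}).$$
Passing to $\widetilde{\mathcal{X}}_{0}\times\widetilde{\mathcal{X}}_{0}$ via proper transforms of $Z_{1,0}$ and $Z_{2,0}$, the difference between $[\Delta_{\widetilde{\mathcal{X}}_{0}}]$ and the sum of the classes of these proper transforms is annihilated by $(\tau\times\tau)_{*}$ (by the projection formula and the decomposition on $\mathcal{X}_{0}$), hence lies in the subgroup of classes supported on the exceptional locus $\bigsqcup_{i}Q_{i}\times Q_{i}$. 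By K\"unneth, each component of this correction is of the form $\alpha\otimes\beta$ with one factor in $H^{*}(Q_{i},\mathbb{Z})$ (wholly algebraic, since $Q_{i}$ is a rational quadric) and, by a parity argument, the other of even degree on $\widetilde{\mathcal{X}}_{0}$; the hypothesis on algebraicity of the even-degree integral homology of $\widetilde{\mathcal{X}}_{0}$ then supplies integral algebraic representatives, so the correction is realized by an integral algebraic cycle supported on $E\times\widetilde{\mathcal{X}}_{0}$. Combining with the proper transform of $Z_{1,0}$ gives an integral cohomological decomposition $[\Delta_{\widetilde{\mathcal{X}}_{0}}]=[Z_{1}']+[\widetilde{\mathcal{X}}_{0}\times\{y_{0}\}]$ with $Z_{1}'$ supported on $D'\times\widetilde{\mathcal{X}}_{0}$ for $D'=\tau^{-1}(D_{0})\cup E\varsubsetneq\widetilde{\mathcal{X}}_{0}$.

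\textbf{Main obstacle.} In (i), the delicate step is the lifting of rational equivalences from $\mathcal{X}_{0,L}$ across the singular points, controlled by rational connectedness of the exceptional quadrics. In (ii), the harder step is algebraizing the cohomological correction supported on the exceptional locus: each K\"unneth component must be realized by an integral algebraic cycle on $\widetilde{\mathcal{X}}_{0}\times\widetilde{\mathcal{X}}_{0}$, which is precisely where the hypothesis on algebraicity of the even-degree integral homology of $\widetilde{\mathcal{X}}_{0}$ enters in an essential way. Both steps rely on the explicit geometry of the resolution of ordinary quadratic singularities as a union of smooth rational quadrics.
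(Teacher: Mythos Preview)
Your overall strategy matches the paper's, and the specialization step in (i) is essentially the same (the paper phrases it via a countability/spreading argument and Fulton's refined intersection with a Cartier divisor, which amounts to your normal-bundle observation). For the transfer from $\mathcal{X}_0$ to $\widetilde{\mathcal{X}}_0$ in (i) the paper works directly with cycles on the product: it restricts the decomposition in ${\rm CH}_n(\mathcal{X}_0\times\mathcal{X}_0)$ to the smooth locus, identifies this with $(\widetilde{\mathcal{X}}_0\setminus E)^2$, applies localization on $\widetilde{\mathcal{X}}_0\times\widetilde{\mathcal{X}}_0$, and then uses universal triviality of ${\rm CH}_0(E_i)$ to rewrite the $\widetilde{\mathcal{X}}_0\times E$ part of the correction. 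Your $L$-point argument has the right idea but the case analysis is imprecise: a point on $Q_i$ cannot be ``moved within its $\tau$-fibre'' off $Q_i$ (the fibre \emph{is} $Q_i$), and when you lift a rational equivalence from $\mathcal{X}_{0,L}$ the residual zero-cycle on $E$ may be spread over several components, so rational connectedness of each $Q_i$ separately does not kill it---one still has to check that points on distinct $Q_i$ become equivalent in $\widetilde{\mathcal{X}}_0$, which the paper extracts at the end from irreducibility of the central fibre.

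Part (ii) has more substantial gaps. First, the exceptional locus of $\tau\times\tau$ is $(E\times\widetilde{\mathcal{X}}_0)\cup(\widetilde{\mathcal{X}}_0\times E)$, not $\bigsqcup_i Q_i\times Q_i$; your K\"unneth description in the next sentence is actually consistent with the correct locus, so the argument as written is internally inconsistent. Second, even granting that the correction class lies in $H_{2n}$ of this union (which requires a blow-up exact sequence you do not invoke), you cannot yet apply K\"unneth on the pieces: the paper uses Mayer--Vietoris together with the vanishing $H_{2n-1}(E\times E,\mathbb{Z})=0$ (quadrics have no odd cohomology) to split the class into contributions from $E\times\widetilde{\mathcal{X}}_0$ and $\widetilde{\mathcal{X}}_0\times E$ separately. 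Third, you only absorb the $E\times\widetilde{\mathcal{X}}_0$ piece into $Z_1'$; the symmetric piece on $\widetilde{\mathcal{X}}_0\times E$ must also be handled---its terms $[z'_k\times z_k]$ with $\dim z'_k<n$ go into $D'\times\widetilde{\mathcal{X}}_0$, while the terms of the form $[\widetilde{\mathcal{X}}_0\times\mathrm{pt}]$ must be shown to sum to zero. Finally, specializing the \emph{cohomological} equality to the central fibre is not the same problem as specializing a Chow relation: a Gysin map on the singular fibred product is not available, and the paper instead uses the topological retraction of $\mathcal{X}'_\Delta\times_\Delta\mathcal{X}'_\Delta$ onto $\mathcal{X}_0\times\mathcal{X}_0$, under which the homology classes of the flat families of cycles are constant.
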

\begin{rema}{\rm As the proof will show, the assumptions on the
singularities of the central fiber can be weakened as follows: For
(i), it suffices to assume that the central fiber is irreducible and
admits a desingularization $\widetilde{\mathcal{X}}_o\rightarrow
\mathcal{X}_o$
with smooth exceptional divisor $E$, whose connected
components
 $E_i$
have universally trivial ${\rm CH}_0$ group (for example, is rational)\footnote{A. Pirutka and
J.-L. Colliot-Th\'{e}l\`{e}ne \cite{pico} have indeed applied successfully
the same method in the case of a specialization to more complicated
singularities, generalizing our  results to  quartic threefolds,
instead of quartic double solids.}. For (ii), it would suffice to
know also that with the same conditions on the desingularization,
the even degree integral homology of $\widetilde{\mathcal{X}}_o$  is
algebraic, the odd degree cohomology of $E$ is trivial and the even
degree integral homology of $E$ is without torsion and generated by
classes of algebraic cycles. These properties are clearly satisfied
when $E$ is a disjoint union of smooth quadrics.}
\end{rema}

\begin{rema}{\rm Theorem \ref{theodeg} will be used in applications
in the following form: If the desingularization
$\widetilde{\mathcal{X}}_o$ of the central fiber does not admit an
integral Chow-theoretic or cohomological decomposition of the
diagonal, the {\it very general} fiber $\mathcal{X}_t$ does not
admit an integral Chow-theoretic or cohomological decomposition of
the diagonal. This is because the set of points $t\in B_{reg}$ such
that the fiber $\mathcal{X}_t$ is smooth and admits an integral
Chow-theoretic (resp. cohomological) decomposition of the diagonal
is a countable union of closed algebraic subsets of  $B_{reg}$, see
below.}
\end{rema}
Before giving the proof of the theorem, let us make one more remark. The proof is in two steps: The first one consists in proving that the central fiber $\mathcal{X}_o$ admits
a decomposition of the diagonal. This  follows from a specialization result  due to Fulton
\cite{fulton}
 which we will reprove below for completeness. One has to be a little careful here since $\mathcal{X}_o$ is singular. The decomposition of the diagonal for $\mathcal{X}_o$ will hold in ${\rm CH}_n(\mathcal{X}_o\times \mathcal{X}_o)$. This first step does not use any
  assumption on the singularities of $\mathcal{X}_o$. The second step consists in comparing
 what happens for $\mathcal{X}_o$ and $\widetilde{\mathcal{X}}_o$. It is here that the assumption on the singularities is used in an essential way. Note that
 Theorem \ref{theodeg} is not true without assumptions on the singularities of the central fiber, as the simplest example
 shows: Consider a family of smooth cubic surfaces specializing to a cubic surface which is a cone
 over a smooth elliptic curve. Then the general fiber is rational, hence admits a Chow-theoretic decomposition of the diagonal, but the desingularization $\widetilde{\mathcal{X}}_o$ of the central fiber, being a
 $\mathbb{P}^1$-bundle over an elliptic curve $E$, does not admit one, even with $\mathbb{Q}$-coefficients, since ${\rm CH}_0(\widetilde{\mathcal{X}}_o)=
 {\rm CH}_0(E)_0$ is nontrivial.
\begin{proof}[Proof of Theorem \ref{theodeg}] (i) For a general $t\in B$, there exist
an effective divisor $D_t\subset \mathcal{X}_t$
and a cycle $Z_t$ supported on $D_t\times \mathcal{X}_t$ such that
for  any point $x_t\in \mathcal{X}_t$, one has
\begin{eqnarray}
\label{eqt}\Delta_{\mathcal{X}_t}=Z_t+ \mathcal{X}_t\times
x_t\,\,{\rm in}\,\, {\rm CH}^n(\mathcal{X}_t\times \mathcal{X}_t).
\end{eqnarray}
The set of data $(D_t,x_t,Z_t)$ above is parameterized by a
countable union of algebraic varieties over $B$  whose image in $B$
contains a Zariski open set. It follows by a Baire category argument
that one of these algebraic varieties dominates $B$, so that there
is a smooth finite cover $B'\rightarrow B$ and a divisor
$\mathcal{D}\subset \mathcal{X}':=B'\times_B\mathcal{X}$, which we
may assume to contain no fiber of $\mathcal{X}'\rightarrow B'$, a
section $\sigma:B'\rightarrow \mathcal{X}'$ and a codimension $n$ (or dimension $n+1$)
cycle $\mathcal{Z}$ in $\mathcal{X}'\times_{B'}\mathcal{X}'$
supported on $\mathcal{D}\times_{B'} \mathcal{X}'$, with the
properties that for general $t\in B'$, \begin{eqnarray}
\label{eqtrest} \Delta_{\mathcal{X}'_t}=\mathcal{Z}_t+
\mathcal{X}'_t\times \sigma(t)\,\,{\rm in}\,\, {\rm
CH}_n(\mathcal{X}'_t\times \mathcal{X}'_t),
\end{eqnarray}
where $\mathcal{Z}_t=\mathcal{Z}_{\mid \mathcal{X}'_t\times
\mathcal{X}'_t}\in {\rm CH}_n(\mathcal{X}'_t\times
\mathcal{X}'_t)$ is well-defined even if $\mathcal{X}'\times_{B'}
\mathcal{X}'$ is singular, because $\mathcal{X}'_t\times
\mathcal{X}'_t$ is a Cartier divisor in $\mathcal{X}'\times_{B'}
\mathcal{X}'$ (see \cite[2.3]{fulton}). Now we use the following
general  fact (we include a proof for completeness):
\begin{prop} \label{prop31jui} Let $\pi:\mathcal{Y}\rightarrow B$ be a flat  morphism of
algebraic varieties, where $B$ is smooth of dimension $r$, and let $\mathcal{Z}\in
{\rm CH}_N(\mathcal{Y})$ be a cycle. Then the set $B_Z$ of points
$t\in B$ such that $\mathcal{Z}_t:=\mathcal{Z}_{\mid \mathcal{Y}_t}$
vanishes in ${\rm CH}_{N-r}(\mathcal{Y}_t)$ is a countable union of proper
closed algebraic subsets of $B$.
\end{prop}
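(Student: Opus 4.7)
The plan is to realize the condition $\mathcal{Z}_t = 0$ in ${\rm CH}_{N-r}(\mathcal{Y}_t)$ as being certified by a ``rational equivalence witness'' of bounded complexity, to parametrize witnesses of each bounded complexity by a scheme proper over $B$, and to verify that the locus of valid witnesses is Zariski closed. First, I fix an effective-minus-effective decomposition $\mathcal{Z} = \mathcal{Z}^+ - \mathcal{Z}^-$ on $\mathcal{Y}$; flatness of $\pi$ makes the restrictions $\mathcal{Z}^\pm_t$ well-defined effective cycles of dimension $N-r$ on each fiber. By the usual characterization of rational equivalence via $\mathbb{P}^1$-parametrized families, $\mathcal{Z}_t$ vanishes in ${\rm CH}_{N-r}(\mathcal{Y}_t)$ if and only if there exist effective cycles $W^\pm \subset \mathbb{P}^1 \times \mathcal{Y}_t$ of dimension $N-r+1$, neither containing a component mapping to a point of $\mathbb{P}^1$, such that
$$\mathcal{Z}^+_t + W^+_{|0} + W^-_{|\infty} = \mathcal{Z}^-_t + W^-_{|0} + W^+_{|\infty}$$
as cycles on $\mathcal{Y}_t$.

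Next, I fix a relatively ample polarization on a projective compactification of $\mathcal{Y}/B$, and for each pair $(P^+,P^-)$ of Hilbert polynomials form the fiber product
$$H_{P^+,P^-} := {\rm Hilb}_{P^+}(\mathbb{P}^1 \times \mathcal{Y}/B) \times_B {\rm Hilb}_{P^-}(\mathbb{P}^1 \times \mathcal{Y}/B),$$
a disjoint union of projective schemes proper over $B$, of which there are countably many in total. On $H_{P^+,P^-}$ there are two families of effective cycles on $H_{P^+,P^-} \times_B \mathcal{Y}$ corresponding to the two sides of the displayed equation above (obtained by restricting the universal subschemes to $\{0\}, \{\infty\} \subset \mathbb{P}^1$ and adding the pullbacks of $\mathcal{Z}^\pm$). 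Let $L_{P^+,P^-} \subset H_{P^+,P^-}$ be the locus where these two families agree on each fiber of $H_{P^+,P^-} \times_B \mathcal{Y} \to H_{P^+,P^-}$. Granting that $L_{P^+,P^-}$ is closed (see below), its image in $B$ is also closed by properness of $H_{P^+,P^-} \to B$, and taking the countable union over all $(P^+,P^-)$ exhibits $B_Z$ as a countable union of closed algebraic subsets of $B$.

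The delicate step, which I expect to be the main obstacle, is the closedness of $L_{P^+,P^-}$ itself: fiberwise equality of two relative families of effective cycles with bounded Hilbert polynomial must be shown to cut out a Zariski closed condition on the parameter space. This has to be handled at the level of cycles with multiplicities rather than schemes, since the restrictions $W^\pm_{|0}, W^\pm_{|\infty}$ are only naturally cycles. The cleanest route is to invoke the separatedness of the relative Chow variety of effective $(N-r)$-cycles of bounded degree on $\mathcal{Y}/B$: two such families agreeing on a dense open subset of the parameter base must coincide everywhere, from which closedness of $L_{P^+,P^-}$ follows by a standard argument. Once this is granted, the rest of the proof is formal, and the implicit non-triviality of $B_Z$ in applications (its being a union of \emph{proper} closed subsets) comes from the existence of some $t_0 \in B$ with $\mathcal{Z}_{t_0} \neq 0$.
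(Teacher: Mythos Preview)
Your route is genuinely different from the paper's. The paper also parametrizes rational-equivalence witnesses by a countable union of quasi-projective varieties $M_l \to B$, but these $M_l$ are \emph{not} proper over $B$; the image of each $M_l$ only contains a dense open $B_l^0$ of a closed subset $B_l \subset B$. The key point in the paper is then a short specialization lemma (essentially Fulton's specialization of Chow classes along a Cartier divisor): since the pullback of $\mathcal{Z}$ to $\mathcal{Y}'_{M_l}:=M_l\times_B\mathcal{Y}$ vanishes in ${\rm CH}$ by virtue of the universal witnesses over $M_l$, its restriction to every fiber over a smooth partial completion $\overline{M_l}$ also vanishes, and this forces $\mathcal{Z}_t = 0$ for all $t \in B_l$. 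So the paper trades properness of the parameter space for a direct specialization argument on $\mathcal{Y}$ itself, whereas you want the parameter scheme to be projective over $B$ from the start and then invoke properness plus separatedness of Chow.

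There is a gap in your argument when $\mathcal{Y}$ is not projective over $B$, and the proposition does not assume this. You form Hilbert schemes using a polarization on a compactification $\overline{\mathcal{Y}}/B$ in order to get properness over $B$, but then the cycle equation you impose is ambiguous. If it is required to hold as an equality of cycles on the compactified fiber $\overline{\mathcal{Y}}_t$, the image of your locus $L_{P^+,P^-}$ in $B$ is only the (in general strictly smaller) set of $t$ with $\overline{\mathcal{Z}}_t = 0$ in ${\rm CH}(\overline{\mathcal{Y}}_t)$, not $B_Z$; a witness for $\mathcal{Z}_t=0$ on $\mathcal{Y}_t$ closed up to $\overline{\mathcal{Y}}_t$ need not satisfy the equation there. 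If instead you only ask the equation to hold on the open $\mathcal{Y}_t$, then $L_{P^+,P^-}$ is no longer the equalizer of two maps to ${\rm Chow}(\overline{\mathcal{Y}}/B)$ and there is no reason for it to be closed. A clean fix is to enlarge the parameter scheme by effective boundary cycles $\gamma^\pm$ supported on $\overline{\mathcal{Y}}_t \setminus \mathcal{Y}_t$ (again of bounded degree) and to require the equation on $\overline{\mathcal{Y}}_t$ after adding $\gamma^+ - \gamma^-$; then the localization sequence identifies the image with $B_Z$, and both properness of the parameter scheme and closedness of $L$ are restored. The paper's specialization lemma simply sidesteps this whole maneuver, since no compactification of $\mathcal{Y}$ is ever needed.
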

\begin{proof} For any $t\in B$ such that $\mathcal{Z}$ and $ \mathcal{Y}_t$
intersect properly and $\mathcal{Z}_{\mid \mathcal{Y}_t}=0$ in ${\rm
CH}_{N-r}(\mathcal{Y}_t)$, there exist subvarieties $W_{i,t}\subset
\mathcal{Y}_t$ of dimension $N-r+1$ and nonzero rational functions $\phi_{i,t}$ on
$\widetilde{W_{i,t}}\stackrel{j_{i,t}}{\rightarrow}\mathcal{Y}_t$
such that $\sum_i{j_{i,t}}_*{\rm div}\,\phi_{i,t}=\mathcal{Z}_{\mid
\mathcal{Y}_t}$. The data $(W_{i,t},\phi_{i,t})$ are parameterized
by a countable union of quasi-projective irreducible varieties
$M_l\stackrel{\alpha_l}{\rightarrow} B$, whose image in $B$ is
exactly the set $B_Z$. We may assume that the $M_l$'s are smooth.
For each of these varieties $M_l$, there exists a closed algebraic
subvariety $B_l\subset B$, such that ${\rm Im}\,\alpha_l$ contains a
Zariski open set $B_l^0$ of $B_l$. Let $B_l^0=\alpha_l(M_l^0)$. For
any point $t$ of $B_l^0$, we have $\mathcal{Z}_{\mid
\mathcal{Y}_t}=0$ in ${\rm CH}_{N-r}(\mathcal{Y}_t)$ and it only remains
to show that this remains true for any point of $B_l$, since we then
have $B_Z=\cup_lB_l$. We observe now that we can assume that the
$M_l$'s carry universal objects, and thus that the cycle
$\mathcal{Z}_l:={\alpha_l'}^*\mathcal{Z}$ is rationally equivalent
to $0$ on $\mathcal{Y}'_{M_l}$, where
$$\mathcal{Y}'_{M_l}:=M_l\times_B\mathcal{Y}\stackrel{\pi'}{\rightarrow} M_l,$$
 and $\alpha'_l:\mathcal{Y}'_{M_l}\rightarrow \mathcal{Y}$ is the
natural map. Here we use the fact, which will be used again below,
 that the morphism
$${\alpha_l'}^*:{\rm CH}_N(\mathcal{Y})\rightarrow {\rm CH}_{N'}(\mathcal{Y}'_{M_l}),\,N'=N-r+ {\rm dim}\,M_l$$
is well-defined, because $B$ is smooth and $\mathcal{Y}\rightarrow B$ is flat.
Let now $\overline{M_l}$ be a smooth partial completion
of $M_l$ on which the morphism $\alpha_l$ extends to a projective
morphism $\overline{\alpha}_l:\overline{M_l}\rightarrow B$ whose
image is equal to $B_l$ by properness. Denote by
$\overline{\mathcal{Y}'_{M_l}}$ the fibered product
$\overline{M_l}\times_B\mathcal{Y}$, with natural morphism
$\pi'':\overline{\mathcal{Y}'_{M_l}}\rightarrow \overline{M_l}$ extending
$\pi'$, and by
$\overline{\alpha'_l}:\overline{\mathcal{Y}'_{M_l}}\rightarrow
\mathcal{Y}$ the natural map.

 Then ${\rm
Im}\,\overline{\alpha}_l=B_l$, and for any $s\in \overline{M_l}$
with image $t\in B_l$, we have
$${{\overline{\alpha}'_l}^*(\mathcal{Z})_{\mid\overline{\mathcal{Y}'_{M_l}}}}
_s=\mathcal{Z}_{\mid
\mathcal{Y}_t}\,\,{\rm in}\,\,{\rm CH}_{N-r}(\mathcal{Y}_t).$$ As  the
cycle ${\overline{\alpha}'_l}^*(\mathcal{Z})$ vanishes on the
Zariski open set ${\pi''}^{-1}(M_l^0)={\pi'}^{-1}(M_l^0)$, we thus conclude applying to
$M=\overline{M_l}$, $W=\overline{\mathcal{Y}'_{M_l}}$, $f=\pi''$,
$\Gamma={\overline{\alpha}'_l}^*(\mathcal{Z})$ the following lemma:

\begin{lemm}  Let $M$ be smooth of dimension $m$ and let  $f:W\rightarrow M$ be a flat morphism. Let $\Gamma$ be a
$N$-cycle on $W$. Assume there is a Zariski dense open set $M^0$ of $M$
such that $\Gamma_{\mid W^0}=0$ in ${\rm CH}(W^0)$, where
$W^0:=f^{-1}(M^0)$. Then for any $t\in M$, $\Gamma_{\mid W_t}=0$ in
${\rm CH}_{N-m}(W_t)$.
\end{lemm}
\begin{proof} We may assume that $M$ is a smooth curve.
 Let $D$ be the divisor
$M\setminus M^0\subset   M$. By the localization exact sequence, there is a $N$-cycle $z$ supported on $f^{-1}(D)$ such that
$$\Gamma=i_{*}(z)\,\,{\rm in}\,\,{\rm CH}_N(W),$$
where $i$ is the inclusion of the Cartier divisor $f^{-1}(D)$ in $W$.
The fact that $\Gamma_{\mid W_t}=0$ in
${\rm CH}_{N-m}(W_t)$  for any $t\in D$ then follows from the fact that
$f^{-1}(D)_{\mid W_t}$ is the trivial Cartier divisor on $W_t$, and from Fulton's definition of the intersection
with a Cartier divisor, which says that
for any point of $t\in D$, $i_{*}(z)_{\mid W_t}=f^{-1}(D)_{\mid W_t}\cdot z_t$, where
$z_t$ is the part of $z$ lying in the component $W_t$ of $ f^{-1}(D)$.
\end{proof}
It follows that for any $t\in\overline{M_l}$, the restriction of the
cycle ${\overline{\alpha}'_l}^*(\mathcal{Z})$ to the fiber
${\pi'}^{-1}(t)\subset \overline{\mathcal{Y}'_{M_l}}$ is trivial,
and this implies that  for any $t\in B_l$, the restriction of the
cycle $\mathcal{Z}$ to the fiber ${\pi}^{-1}(t)\subset
\mathcal{Y}_t$ is trivial.

\end{proof}
 By Proposition \ref{prop31jui}, the locus of points $t\in B'$ such that
(\ref{eqtrest}) holds is  a countable union of closed algebraic
subsets of $B'$.
 As it contains  by assumption
a Zariski open set, we conclude that (\ref{eqtrest}) holds for any
$t\in B'$. Choose for $t$ any point $o'$ over $o\in B$. Then
identifying $\mathcal{X}'_{o'}$ with $\mathcal{X}_o$ we conclude
that there exist a divisor $D_o\subset \mathcal{X}_o$, a point
$x_o\in\mathcal{X}_o$ and a cycle $Z_o$ supported on $D_o\times
\mathcal{X}_o$, such that
\begin{eqnarray}
\label{eqto} \Delta_{\mathcal{X}_o}={Z}_o+ \mathcal{X}_o\times
x_o\,\,{\rm in}\,\, {\rm CH}_n(\mathcal{X}_o\times \mathcal{X}_o).
\end{eqnarray}

Let $\tau:\widetilde{\mathcal{X}}_o\rightarrow \mathcal{X}_o$ be the
desingularization obtained by blowing-up the singular points. It
remains to deduce from  (\ref{eqto}) that
$\widetilde{\mathcal{X}}_o$ satisfies the same property. Let
$x_i,\,i=1,\ldots,\,N$ be the singular points of $\mathcal{X}_o$,
and $E_i:=\tau^{-1}(x_i)$. By assumption, $E_i$ is a smooth quadric
of dimension $\geq1$, and in particular $E_i$ is rational and has
universally trivial ${\rm CH}_0$ group. Let $E:=\cup_iE_i$. Then
(\ref{eqto}) restricted to
$$(\mathcal{X}_o\setminus\{x_1,\ldots,x_N\})\times (\mathcal{X}_o \setminus\{x_1,\ldots,x_N\})\cong
(\widetilde{\mathcal{X}}_o\setminus E)\times
(\widetilde{\mathcal{X}}_o \setminus E)$$ provides:
\begin{eqnarray}
\label{eqtotilde-E} \Delta_{\widetilde{\mathcal{X}}_o\setminus
E}={Z}_o+ (\widetilde{\mathcal{X}}_o\setminus E)\times x_o\,\,{\rm
in}\,\, {\rm CH}_n((\widetilde{\mathcal{X}}_o\setminus E)\times
(\widetilde{\mathcal{X}}_o \setminus E)).
\end{eqnarray}
where $Z_o$ is supported on $D_o\times
(\widetilde{\mathcal{X}}_o\setminus E)$ for a  proper closed
algebraic subset
 $D_o$ of $\widetilde{\mathcal{X}}_o\setminus E$.
 The localization exact sequence
 allows to rewrite (\ref{eqtotilde-E}) as
 \begin{eqnarray}
 \label{eqtotilde}
\Delta_{\widetilde{\mathcal{X}}_o}={Z}_o+ \widetilde{\mathcal{X}}_o
\times x_o+Z\,\,{\rm in}\,\, {\rm
CH}_n(\widetilde{\mathcal{X}}_o\times \widetilde{\mathcal{X}}_o).
\end{eqnarray}
where $Z_o$ is supported on $D'_o\times \widetilde{\mathcal{X}}_o$
for a proper closed algebraic subset
 $D'_o\subsetneqq \widetilde{\mathcal{X}}_o$ and $Z$ is supported on
  $\widetilde{\mathcal{X}}_o\times E\cup E\times \widetilde{\mathcal{X}}_o$.
 Writing $Z$ as $Z_1+Z_2$, where $Z_1$ is supported on $ E\times \widetilde{\mathcal{X}}_o$, and
 $Z_2$ is supported on $ \widetilde{\mathcal{X}}_o\times E $,
 it is clear that up to replacing in (\ref{eqtotilde})
 $Z_o$ by $Z_o+Z_1$ and  $D'_o$ by $D'_o\cup E$, we may assume that $Z=Z_2$ is supported on
 $$\widetilde{\mathcal{X}}_o\times E=\bigsqcup_i \widetilde{\mathcal{X}}_o\times E_i.$$
 Let $Z_{2,i}$ be the restriction of $Z_2$ to the connected  component
$ \widetilde{\mathcal{X}}_o\times E_i$.
 As ${\rm dim}\,Z_2=n={\rm dim}\,\widetilde{\mathcal{X}}_o$ and $E_i$ has universally
  trivial ${\rm CH}_0$ group, one can write for each $i$
   \begin{eqnarray}
 \label{eqZi}
 Z_{2,i}=Z_{2,i}'+\mu_i \widetilde{\mathcal{X}}_o\times x_i
  \end{eqnarray}
  in ${\rm CH}_n(\widetilde{\mathcal{X}}_o\times E_i)$, where
  $x_i$ is a point of $E_i$, $\mu_i\in\mathbb{Z}$ and
 $ Z_{2,i}'$ is supported on $D_i\times E_i$ for some proper closed algebraic subset $D_i$ of
 $\widetilde{\mathcal{X}}_o$.

 Combining (\ref{eqtotilde}) and (\ref{eqZi}), we finally conclude that
  \begin{eqnarray}\label{eqfin} \Delta_{\widetilde{\mathcal{X}}_o}={Z}'_o+ \widetilde{\mathcal{X}}_o \times x_o+
 \sum_i\mu_i \widetilde{\mathcal{X}}_o\times x_i\,\,{\rm in}\,\, {\rm CH}^n(\widetilde{\mathcal{X}}_o\times \widetilde{\mathcal{X}}_o) ,
 \end{eqnarray}
 where  $Z'_o=Z_o+Z_1+\sum_{i}Z'_{2,i}$ is supported on $ D''_o\times \widetilde{\mathcal{X}}_o$ for
 a proper closed  algebraic subset
 $D''_o=D'_o\cup E\cup (\cup_i D_i)$ of $\widetilde{\mathcal{X}}_o$.
 Formula (\ref{eqfin}) implies that
 $\sum_i\mu_i=0$ and that  ${\rm CH}_0( \widetilde{\mathcal{X}}_o)$ is generated by
 the $x_i,\,i=o,\,1,\ldots , N$, which easily implies that ${\rm CH}_0( \widetilde{\mathcal{X}}_o)=\mathbb{Z}$
 since the central fiber is irreducible under our assumptions (this is why we impose
 the condition that the fiber dimension is $\geq2$; in fact, Theorem \ref{theodeg} is wrong if the fiber dimension is $1$, because the
 disjoint union of two $\mathbb{P}^1$ does not admit a Chow-theoretic decomposition of the diagonal).
 Hence (\ref{eqfin}) gives
 $$\Delta_{\widetilde{\mathcal{X}}_o}={Z}'_o+ \widetilde{\mathcal{X}}_o \times x_o\,\,{\rm in}\,\, {\rm CH}^n(\widetilde{\mathcal{X}}_o\times \widetilde{\mathcal{X}}_o) ,$$
 which concludes the proof of (i).

\vspace{0.5cm}

(ii) The proof of (ii) works very similarly. We first  construct as
before a smooth finite cover $B'\rightarrow B$, a divisor
$\mathcal{D}\subset \mathcal{X}':=B'\times_B\mathcal{X}$, which we
may assume to contain no fiber of $\mathcal{X}'\rightarrow B'$, a
section $\sigma:B'\rightarrow \mathcal{X}'$ and a codimension $n$
cycle $\mathcal{Z}$ in $\mathcal{X}'\times_{B'}\mathcal{X}'$
supported on $\mathcal{D}\times_{B'} \mathcal{X}'$, with the
properties that for general $t\in B'$ (so in particular
$\mathcal{X}'_t$ is smooth), we have the equality of cycle classes
 \begin{eqnarray}
 \label{eqcycle1}
 [\Delta_{\mathcal{X}'_t}]=[\mathcal{Z}_t]+
 [\mathcal{X}'_t\times \sigma(t)]\,\,{\rm in}\,\, { H}^{2n}(\mathcal{X}'_t\times \mathcal{X}'_t,\mathbb{Z})=
 { H}_{2n}(\mathcal{X}'_t\times \mathcal{X}'_t,\mathbb{Z}),
\end{eqnarray}
where $\mathcal{Z}_t=\mathcal{Z}_{\mid \mathcal{X}'_t\times
\mathcal{X}'_t}$. We now work in the analytic setting and restrict
to $\mathcal{X}'_{\Delta}$, where
 $\Delta$ is a small disc in  $B'$ centered at  a point $o'$ of $B'$ over $o\in B$, such
 that $\mathcal{X}'_{\Delta}$   retracts continuously for the usual topology  on the
 central fiber $\mathcal{X}'_{o'}$, (the retraction map being homotopic over $\Delta$ to the
 identity,) so that  $\mathcal{X}'_{\Delta}\times_{\Delta}\mathcal{X}'_{\Delta}$ retracts
 similarly  on
$\mathcal{X}'_{o'}\times \mathcal{X}'_{o'}\cong
\mathcal{X}_{o}\times \mathcal{X}_{o} $. Then we conclude that
(\ref{eqcycle1}) implies

\begin{eqnarray}
\label{eqtohom} [\Delta_{\mathcal{X}_o}]=[\mathcal{Z}_o]+
[\mathcal{X}_o\times x_o]\,\,{\rm in}\,\,
{H}_{2n}(\mathcal{X}_o\times \mathcal{X}_o,\mathbb{Z}),
\end{eqnarray}
where the cycle classes are from now on taken in homology. This is
because the topological retraction from
$\mathcal{X}'_{\Delta}\times_{\Delta}\mathcal{X}'_{\Delta}$ to
$\mathcal{X}'_{o'}\times \mathcal{X}'_{o'}$ induces an isomorphism
$$H_*(\mathcal{X}'_{o'}\times \mathcal{X}'_{o'},\mathbb{Z})\cong
H_*(\mathcal{X}'_{\Delta}\times_{\Delta}\mathcal{X}'_{\Delta},\mathbb{Z})$$
and by flatness, the classes
$$[\Delta_{\mathcal{X}'_{t}}],\,[\mathcal{Z}_t]\in
H_{2n}(\mathcal{X}'_{\Delta}\times_{\Delta}\mathcal{X}'_{\Delta},\mathbb{Z})$$
are constant (that is, independent of $t$) in
$$H_{2n}(\mathcal{X}'_{\Delta}\times_{\Delta}\mathcal{X}'_{\Delta},\mathbb{Z})=H_{2n}(\mathcal{X}'_{o'}\times
\mathcal{X}'_{o'},\mathbb{Z}).$$

With the same notation $\widetilde{\mathcal{X}}_o,x_i,\,E_i,\,E$ as
in the proof of (i), we deduce from (\ref{eqtohom}) by taking the
image in the relative  homology of the pair $({\mathcal{X}}_o\times
{\mathcal{X}}_o,({\mathcal{X}}_o\times \{x_1,\ldots x_N\}\cup
\{x_1,\ldots x_N\}\times {\mathcal{X}}_o))$ the following equality
in the relative homology group
$$H_{2n}({\mathcal{X}}_o\times {\mathcal{X}}_o,({\mathcal{X}}_o\times \{x_1,\ldots x_N\}\cup \{x_1,\ldots x_N\}
\times
{\mathcal{X}}_o),\mathbb{Z})=H_{2n}(\widetilde{\mathcal{X}}_o\times
\widetilde{\mathcal{X}}_o,(\widetilde{\mathcal{X}}_o\times E\cup
E\times \widetilde{\mathcal{X}}_o),\mathbb{Z})$$

\begin{eqnarray}
\label{eqtohomrel}
[\Delta_{\widetilde{\mathcal{X}}_o}]_{rel}=[\mathcal{Z}_o]_{rel}+
[\mathcal{X}_o\times x_o]_{rel}\,\,{\rm in}\,\,
{H}_{2n}(\widetilde{\mathcal{X}}_o\times
\widetilde{\mathcal{X}}_o,(\widetilde{\mathcal{X}}_o\times E\cup
E\times \widetilde{\mathcal{X}}_o),\mathbb{Z}),
\end{eqnarray}
where the subscript ``rel'' indicates that we consider the homology
class in the relative homology group
${H}_{2n}(\widetilde{\mathcal{X}}_o\times
\widetilde{\mathcal{X}}_o,(\widetilde{\mathcal{X}}_o\times E\cup
E\times \widetilde{\mathcal{X}}_o),\mathbb{Z})$. Formula
(\ref{eqtohomrel}) and the long exact sequence of relative homology
imply that the homology  class
$$[\Delta_{\widetilde{\mathcal{X}}_o}]-[\mathcal{Z}_o]- [\widetilde{\mathcal{X}}_o\times x_o]\in {H}_{2n}(\widetilde{\mathcal{X}}_o\times \widetilde{\mathcal{X}}_o,\mathbb{Z})$$
comes from a homology class
$$\beta\in H_{2n}(\widetilde{\mathcal{X}}_o\times E\cup E\times \widetilde{\mathcal{X}}_o,\mathbb{Z}).$$
Note now that the closed subset
$$\widetilde{\mathcal{X}}_o\times E\cup E\times \widetilde{\mathcal{X}}_o\subset \widetilde{\mathcal{X}}_o\times \widetilde{\mathcal{X}}_o$$
is the union of $\widetilde{\mathcal{X}}_o\times E$ and
$E\times\widetilde{\mathcal{X}}_o $ glued along $E\times E$. We thus
have a Mayer-Vietoris exact sequence
$$  \ldots H_{2n}(\widetilde{\mathcal{X}}_o\times E,\mathbb{Z})\oplus
 H_{2n}( E\times \widetilde{\mathcal{X}}_o,\mathbb{Z})
 \rightarrow H_{2n}(\widetilde{\mathcal{X}}_o\times E\cup E\times \widetilde{\mathcal{X}}_o,\mathbb{Z})
\rightarrow H_{2n-1}(E\times E,\mathbb{Z})\rightarrow \ldots$$ As
$E\times E=\bigsqcup_{i,j} E_i\times E_j$ and $E_i\times E_j$ has
trivial homology in odd degree, we conclude that $H_{2n-1}(E\times
E,\mathbb{Z})=0$, so that  $\beta$ comes from a homology class
$$\gamma=(\gamma_1, \gamma_2)\in H_{2n}(\widetilde{\mathcal{X}}_o\times E,\mathbb{Z})\oplus H_{2n}( E\times \widetilde{\mathcal{X}}_o,\mathbb{Z})=H^{2n-2}(\widetilde{\mathcal{X}}_o\times E,\mathbb{Z})\oplus H^{2n-2}( E\times \widetilde{\mathcal{X}}_o,\mathbb{Z})
.$$ We now use the assumption made on $\widetilde{\mathcal{X}}_o$,
namely that its even degree cohomology is algebraic. As the
cohomology of $E$ has no torsion and is algebraic, we get by the
K\"unneth decomposition that
$$H^{2n-2}(E\times \widetilde{\mathcal{X}}_o,\mathbb{Z})=\oplus_{0\leq 2i\leq 2n-2}
H^{2i}(E,\mathbb{Z})\otimes H^{2n-2-2i}(
\widetilde{\mathcal{X}}_o,\mathbb{Z})$$ is generated by  classes of
algebraic cycles $z_{j}\times z'_{j}\subset E\times
\widetilde{\mathcal{X}}_o$ and similarly for
$\widetilde{\mathcal{X}}_o\times E$.

Putting everything together, we get an equality
$$[\Delta_{\widetilde{\mathcal{X}}_o}]-[\mathcal{Z}_o]- [\mathcal{X}_o\times x_o]=\sum_jn_j [z_j\times z'_{j}]+\sum_kn'_k [z'_k\times z_k]\,\,{\rm in}\,\,{H}_{2n}(\widetilde{\mathcal{X}}_o\times \widetilde{\mathcal{X}}_o,\mathbb{Z}).$$
This provides us with an integral  cohomological decomposition of
the diagonal for $\widetilde{\mathcal{X}}_o$ since in the term on
the right, all the cycle classes of the form
$[\widetilde{\mathcal{X}}_o\times point]$ are cohomologous and they
have to sum-up to zero, while all the other terms $[z'_k\times z_k]$
with ${\rm dim}\,z'_k<n$ are supported on $D\times
\widetilde{\mathcal{X}}_o$ for some proper closed algebraic subset
$D$ of $\widetilde{\mathcal{X}}_o$.

\end{proof}

Let us now deduce from Theorem \ref{theodeg} the following result:
\begin{theo} (cf. Theorem \ref{theointrochow} and Theorem \ref{theointrocoh30jui}, (i)) \label{theocoro} Let $\widetilde{X}$ be the natural
 desingularization of  a very general quartic double solid $X$ with $k\leq 7$ nodes.
Then the integral cohomology of $\widetilde{X}$ has no torsion, but
$\widetilde{X}$ does not admit an integral cohomological
decomposition of the diagonal. A fortiori, $\widetilde{X}$ does not admit a Chow-theoretic
decomposition of the diagonal, that is, equivalently,  the group ${\rm
CH}_0(\widetilde{X})$ is not universally trivial.

\end{theo}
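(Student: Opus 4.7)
The approach is to apply the degeneration Theorem~\ref{theodeg}(ii) in contrapositive form, specializing the very general $k$-nodal quartic double solid to the Artin--Mumford $10$-nodal quartic double solid. The desingularization of the central fiber has nontrivial $2$-torsion in $H^3(-,\mathbb{Z})$ by \cite{artinmumford}, hence fails to admit an integral cohomological decomposition of the diagonal by the Artin--Mumford obstruction recalled in the introduction (proved in \cite{voisinJAG}); Theorem~\ref{theodeg}(ii) then transmits this failure to the desingularization $\widetilde{X}$ of the very general fiber.

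\emph{Construction of the degeneration.} For $k=0$ one takes a generic one-parameter specialization of smooth quartic double solids to the Artin--Mumford quartic double solid and applies Theorem~\ref{theodeg}(ii) directly. For $1\le k\le 7$ I would first build a flat family $\mathcal{S}\to B$ of quartic surfaces over a smooth curve whose central fiber $\mathcal{S}_o$ is the Artin--Mumford quartic, with its $10$ nodes $p_1,\ldots,p_{10}$, and whose general fiber $\mathcal{S}_t$ has exactly $k$ ordinary nodes at the constant positions $p_1,\ldots,p_k$. This is achieved by a generic one-parameter deformation of $\mathcal{S}_o$ inside the linear system $M_{p_1,\ldots,p_k}$ of quartics singular along $\{p_1,\ldots,p_k\}$, which has the expected dimension $34-4k$ since $k\le 7$ generic points impose independent conditions on quartics, and which smooths the $10-k$ unwanted nodes. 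Taking double covers branched along $\mathcal{S}$ gives $\mathcal{X}\to B$; resolving in family the $k$ persistent nodes lying over $p_1,\ldots,p_k$ yields $\mathcal{X}'\to B$ with smooth general fiber $\mathcal{X}'_t=\widetilde{X}_t$ and central fiber $\mathcal{X}'_o$ whose only singularities are the $10-k$ ordinary quadratic points lying over $p_{k+1},\ldots,p_{10}$. The general fiber lies in the distinguished component $M$ of Remark~\ref{rematartive}, since $\{p_1,\ldots,p_k\}$ lies in the Zariski open set $U\subset(\mathbb{P}^3)^{(k)}$.

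\emph{Conclusion via Theorem~\ref{theodeg}(ii).} A full desingularization $\widetilde{\mathcal{X}}'_o$ coincides with the standard desingularization of the Artin--Mumford threefold. Its even degree integral homology is generated by algebraic cycle classes: $H^2$ by the Lefschetz $(1,1)$ theorem, and $H^4$ by the integral Hodge conjecture for rationally connected threefolds \cite{voisinintegralhodge}. Combined with the $2$-torsion in $H^3(\widetilde{\mathcal{X}}'_o,\mathbb{Z})$ and the Artin--Mumford obstruction, the contrapositive of Theorem~\ref{theodeg}(ii) gives that $\widetilde{X}_t$ does not admit an integral cohomological decomposition of the diagonal for very general $t$. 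A Chow theoretic decomposition would produce an integral cohomological one by taking cycle classes, so neither exists; by \cite[Lemma~1.3]{auelCT} this is equivalent to the non universal triviality of ${\rm CH}_0(\widetilde{X})$. The torsion freeness of $H^*(\widetilde{X},\mathbb{Z})$ for the desingularization of a very general $k\le 7$-nodal quartic double solid in $M$ is a standard topological computation: the configuration-dependent defect producing the Artin--Mumford $2$-torsion does not arise when the $k$ nodes are in sufficiently general position.

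\emph{Main obstacle.} The most delicate point is the simultaneous resolution of the $k$ persistent nodes of the total space $\mathcal{X}\to B$: since $\mathcal{X}$ is itself singular along the node sections, constructing $\mathcal{X}'\to B$ with the prescribed central fiber requires a careful choice of blow-up (or small resolution after suitable base change) arranged so that the singularities of $\mathcal{X}'_o$ remain ordinary quadratic, as required by Theorem~\ref{theodeg}. Once this is in hand, the remainder of the argument is essentially formal.
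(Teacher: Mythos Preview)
Your overall strategy matches the paper's: specialize the (desingularized) $k$-nodal quartic double solid to a partial desingularization of the Artin--Mumford solid, use the Artin--Mumford $2$-torsion together with Theorem~\ref{theocitedeJAG} to block an integral cohomological decomposition for the central fiber, check the even-degree algebraicity hypothesis via \cite{voisinintegralhodge}, and transfer via Theorem~\ref{theodeg}(ii).

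There is, however, a genuine gap. You assert that ``$\{p_1,\ldots,p_k\}$ lies in the Zariski open set $U\subset(\mathbb{P}^3)^{(k)}$'' in order to place the general fiber of your family in the distinguished component $M$ of Remark~\ref{rematartive}. But the nodes of the Artin--Mumford quartic are in very special position (nine of them project to a complete intersection of two plane cubics), and it is not at all clear that seven of them impose $28$ independent conditions on quartics singular there. The paper does \emph{not} prove this. Instead it builds a larger family $B$ in which the $k$ nodes are allowed to move (Sublemma~\ref{sublemma31jui}) and then shows, via a substantial argument (Sublemma~\ref{leuniquecomp}: a Galois/monodromy computation on the nine base points together with a lattice check on the deformed $K3$), that the map $B\to(\mathbb{P}^3)^{(k)}$ is dominant, hence that the general fiber lands in $M$. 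This is Lemma~\ref{tobeproved}, and it is the technical heart of the proof; your one-line appeal to $U$ sidesteps exactly this point. Relatedly, you do not justify that the $10-k$ unwanted nodes can be smoothed while keeping the $k$ chosen ones; this independence of the nodes is the content of Sublemma~\ref{sublemma31jui} and is not automatic.

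Your identification of the ``main obstacle'' is also misplaced. The simultaneous resolution of the $k$ persistent nodes is unproblematic: the node loci are sections of $\mathcal{X}\to B$, and blowing them up in the total space resolves the fibers simultaneously with the correct central fiber (this is exactly the second sentence of Lemma~\ref{tobeproved}). The real difficulty is the one above. Finally, for the torsion-freeness of $H^*(\widetilde{X},\mathbb{Z})$ the paper cites Endrass \cite{endrass} combined with the existence of a degree-$2$ unirational parametrization; this is not quite a ``standard topological computation''.
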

\begin{proof} The first statement is proved in \cite{endrass}, if
 we observe in the nodal case that $\widetilde{X}$ admits a unirational parametrization of degree
 $2$ (as all nodal quartic  double solids do, see paragraph \ref{sec01}). This indeed implies that the
 torsion in $H^*(\widetilde{X},\mathbb{Z})$ is of order $2$, while
 Endrass \cite{endrass}
 proves that there is no $2$-torsion in $H^*(\widetilde{X},\mathbb{Z})$ if $\widetilde{X}$ has less than $10$ nodes.

 We next claim the following:
 \begin{lemm}\label{tobeproved}  The general quartic double solid
$X$  with $k\leq 7$ nodes can be specialized to the Artin-Mumford
double solid $X_o$ constructed in \cite{artinmumford}. In
particular, its desingularization obtained by blowing-up its $k$
nodes can be specialized onto the partial desingularization of $X_o$
obtained by blowing-up the  corresponding $k$ nodes.
\end{lemm}

Postponing the proof of the lemma, we now conclude as follows: First
of all, as $\widetilde{X}_o$ has by Artin-Mumford
\cite{artinmumford} some nontrivial $2$-torsion in its integral
cohomology, it does not admit an integral cohomological
decomposition of the diagonal (see \cite{voisinJAG} or Theorem
\ref{theocitedeJAG}). We use now \cite{voisinintegralhodge} which
guarantees that the even degree integral cohomology of
$\widetilde{X}_o$ is algebraic, because $\widetilde{X}_o$ is
uniruled of dimension $3$. It then follows from Lemma
\ref{tobeproved}  and  Theorem \ref{theodeg} that the very general
$\widetilde{X}$ as in Corollary \ref{theocoro} does not admit an
integral cohomological decomposition of the diagonal.

\end{proof}
\begin{proof}[Proof of Lemma \ref{tobeproved}] The data of a $k$-nodal quartic double
solid is equivalent to the data of the corresponding quartic
ramification  surface which is also $k$-nodal. Let us consider a
general Artin-Mumford quartic surface $S$. It has $10$ nodes
$P_0,\ldots, P_9$, where  $P_0$ is the point defined in coordinates
$X_0,\ldots, X_3$ by $X_0=X_1=X_2=0$ and the $P_i,\,i\geq 1$ are
above $9$ points $O_i\in \mathbb{P}^2$ via the linear projection
$\mathbb{P}^3\dashrightarrow\mathbb{P}^2$ from $P_0$. The $9$ points
$O_i,\,i\geq 1$ form the reduced intersection of two plane cubics.
The general deformation theory of $K3$ surfaces (see
\cite{palaiseau}) tells us that the Artin-Mumford surface with $10$
nodes $P_0,\ldots,P_9$ can have its nodes smoothed independently,
keeping the other nodes. Let us  prove this statement in a more
algebraic and elementary way.
\begin{sublemma} \label{sublemma31jui} For any $k\leq 10$, there exist a smooth quasiprojective variety $B$, and
a family of quartic surfaces $\pi:\mathcal{S}\rightarrow
B,\,\mathcal{S}\subset B\times \mathbb{P}^3$ with the following
property: The general fiber of $\pi$ is $k$-nodal and there exists a
non-empty proper closed algebraic subset $B'\subset B$ of
codimension $10-k$ parameterizing $10$-nodal Artin-Mumford
surfaces\footnote{One can even show that the total space
$\mathcal{S}$ is smooth, but this is not useful here.}.
\end{sublemma}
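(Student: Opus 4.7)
The plan is to construct $B$ locally around a fixed Artin-Mumford quartic surface $S_0$, as a transverse intersection of equisingular hypersurfaces in $\mathbb{P}^{34}=|H^0(\mathbb{P}^3,\mathcal{O}(4))|$. Pick such an $S_0$ with its ten nodes $P_0,\ldots,P_9$, and choose a subset $I\subset\{0,\ldots,9\}$ of size $k$. For each $i\in\{0,\ldots,9\}$ let $\Sigma_i\subset\mathbb{P}^{34}$ be the locus of quartics having a node specializing from $P_i$. By standard deformation theory of an isolated ordinary double point on a hypersurface, $\Sigma_i$ is, near $[S_0]$, the germ of a smooth hypersurface with Zariski tangent space $\{F:F(P_i)=0\}/\mathbb{C}\cdot S_0$; the point is that for a first-order deformation $S_0+tF$, the local critical value of $S_0+tF$ near $P_i$ has expansion $tF(P_i)+O(t^2)$, while the non-degenerate quadratic tangent cone of $S_0$ at $P_i$ is automatically preserved under arbitrary first-order perturbations.

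I would then take $B$ to be a smooth quasiprojective neighbourhood of $[S_0]$ in $\bigcap_{i\in I}\Sigma_i$. The decisive technical input is the transversality of the ten hypersurfaces $\Sigma_0,\ldots,\Sigma_9$ at $[S_0]$, equivalently the linear independence of the ten evaluation functionals $F\mapsto F(P_i)$ on the $35$-dimensional space $H^0(\mathbb{P}^3,\mathcal{O}(4))$, that is, the fact that the ten Artin-Mumford points impose ten independent conditions on quartic polynomials. Granting this, $B$ is smooth of codimension $k$ in $\mathbb{P}^{34}$; its general member is a quartic with exactly $k$ nodes located near $\{P_i:i\in I\}$, since any additional node would force it into some $\Sigma_j$ with $j\notin I$, a proper subvariety of $B$ by transversality. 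Setting $B':=B\cap\bigcap_{j\notin I}\Sigma_j$, one gets a subvariety containing $[S_0]$, of codimension $10-k$ in $B$, whose points parameterize $10$-nodal deformations of $S_0$; by openness of the Artin-Mumford component of the stratum of $10$-nodal quartic $K3$'s near $S_0$, every point of $B'$ parameterizes an Artin-Mumford surface, which is exactly what the sublemma asks for.

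The main obstacle is the linear-independence/transversality step above. I would verify it by exhibiting, for each $i$, a quartic on $\mathbb{P}^3$ vanishing at every $P_j$ with $j\neq i$ but not at $P_i$. For $i\geq 1$ this uses the projection $\pi:\mathbb{P}^3\dashrightarrow\mathbb{P}^2$ from $P_0$: the points $O_j:=\pi(P_j)$, $j\geq 1$, form the complete intersection of two plane cubics, and a direct dimension count in $H^0(\mathbb{P}^2,\mathcal{O}(4))$ (alternatively, a Cayley-Bacharach argument) produces a plane quartic vanishing at $\{O_j:j\neq i,\,j\geq 1\}$ but not at $O_i$; its pullback along $\pi$ is a quartic on $\mathbb{P}^3$ vanishing at $P_0$ and at every $P_j$ with $j\geq 1$, $j\neq i$, but not at $P_i$. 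For $i=0$, I would instead take a quartic of the form $X_3\cdot G$ with $G$ a cubic vanishing at $P_1,\ldots,P_9$ but not at $P_0$; the existence of such a $G$ reduces to another dimension count in the $20$-dimensional space of cubics in $\mathbb{P}^3$, once one checks, directly from the explicit description of the $P_j$, that the nine points $P_1,\ldots,P_9$ impose nine independent conditions on cubics.
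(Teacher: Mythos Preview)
Your strategy matches the paper's: both reduce to showing that the ten nodes of an Artin--Mumford quartic impose independent conditions on $H^0(\mathbb{P}^3,\mathcal{O}(4))$, and then build $B$ as the locus of quartics singular at $k$ of the nodes. One technical point: your $\Sigma_i$ are only \emph{analytic} branches of the discriminant near $[S_0]$ (globally the discriminant in $\mathbb{P}^{34}$ is irreducible, so there is no algebraic ``$\Sigma_i$''), hence $\bigcap_{i\in I}\Sigma_i$ is a priori just an analytic germ, not a quasiprojective variety. The paper handles this by working instead inside the incidence variety $\{(x_1,\dots,x_k,g)\in(\mathbb{P}^3)^k\times\mathbb{P}^N:g\text{ singular at each }x_i\}$, which is genuinely algebraic and maps locally isomorphically onto your germ; you should do the same.

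Where the two arguments genuinely diverge is in the independence step. The paper proves it for \emph{any} nodal quartic $S$ with node set $Z$: it exhibits an irreducible nodal curve $C\in|\mathcal{I}_Z(4)|$ on $S$ and uses the genus drop of its normalization to bound $h^0(C,\mathcal{I}_Z(4))$, hence $h^0(\mathbb{P}^3,\mathcal{I}_Z(4))$. Your argument is tailored to the Artin--Mumford geometry via the projection from $P_0$. For $i\geq1$ this works: the nine points $O_j$ form a $(3,3)$ complete intersection in $\mathbb{P}^2$, so $h^1(\mathcal{I}_{\{O_j\}}(4))=0$ and the required plane quartic exists. For $i=0$, however, there is a gap in your reduction: knowing that $P_1,\dots,P_9$ impose nine conditions on cubics in $\mathbb{P}^3$ does \emph{not} by itself yield a cubic through them avoiding $P_0$; you would need the extra fact that $P_0$ is not a base point of $|\mathcal{I}_{P_1,\dots,P_9}(3)|$, which is a separate verification. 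A cleaner fix, still in your spirit, is to take instead $F=X_3^4+F_4(X_0,X_1,X_2)$ with $F_4$ a plane quartic satisfying $F_4(O_j)=-d_j^{\,4}$ for each $j\geq1$ (writing $P_j=[a_j{:}b_j{:}c_j{:}d_j]$); such $F_4$ exists precisely because the $O_j$ impose nine conditions on plane quartics, which you have already checked, and then $F(P_0)=1$ while $F(P_j)=0$ for $j\geq1$.
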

\begin{proof} First of all, we claim that if $S\subset \mathbb{P}^3$ is
a $s+1$-nodal  quartic surface defined by a quartic polynomial $f$,
then the set $Z$ of nodes $P_i,\,i=0,\ldots , s$ of $S$ imposes
$s+1$
 independent conditions to quartic polynomials. Indeed,
one easily check that there exists an irreducible nodal curve in the
linear system $|\mathcal{I}_Z(4)|$ (since $|\mathcal{I}_Z(3)|$
contains the partial derivatives of $f$, this linear system has no
base point on $S\setminus Z$ and cuts $Z$ schematically, hence it is
 nef on the blow-up $\widetilde{S}$ of $S$ along $Z$; thus
$|\mathcal{I}_Z(4)|$ is nef and big  on $\widetilde{S}$). Then the
normalized curve $n:\widetilde{C}\rightarrow C$ contains the set
$\widetilde{Z}=n^{-1}(Z)$, and we have
$K_{\widetilde{C}}=n^*(\mathcal{O}_C(4))(-\widetilde{Z})$; thus, as
$\widetilde{C}$ is irreducible, we have $g(\widetilde{C})=g(C)-|Z|$,
which provides
$$h^0(\widetilde{C},n^*(\mathcal{O}_C(4))(-\widetilde{Z}))=g(\widetilde{C})=g(C)-|Z|=h^0(\mathcal{O}_C(4))-|Z|.$$
As  $H^0(\widetilde{C},n^*(\mathcal{O}_C(4))(-\widetilde{Z}))$
contains $H^0(C,\mathcal{O}_C(4)\otimes \mathcal{I}_Z)$, one
concludes that $$h^0(C,\mathcal{O}_C(4)\otimes \mathcal{I}_Z)\leq
h^0(\mathcal{O}_C(4))-|Z|$$ which proves the claim.

This implies classically that in the projective space $\mathbb{P}^N$
of all quartic homogeneous polynomials on $\mathbb{P}^3$, the
hypersurface $\mathcal{D}$ consisting of quartic polynomials with
one node has $s+1$ (in our case, $10$) analytic smooth branches
intersecting transversally at $f$. Concretely, the normalization
$\widetilde{\mathcal{D}}$ of $\mathcal{D}$ is defined as the
subvariety of $\mathbb{P}^3\times \mathbb{P}^N$ defined by
$$\widetilde{\mathcal{D}}=\{(x,g)\in \mathbb{P}^3\times \mathbb{P}^N,\,g\,\,{\rm is\,\, singular\,\, at}\,\, x\},$$
and the branches $\mathcal{D}_i$ of $\mathcal{D}$ passing through
$f$ are in one to one correspondence with the preimages of
$\widetilde{\mathcal{D}}\rightarrow \mathcal{D}$ over $f$, that is
the nodes $P_i$ of $S$.

 Coming back to our situation, the nodes
$P_1,\ldots,P_{k}$ being fixed, the intersection of the
corresponding analytic branches $\mathcal{D}_i,\,i=1,\ldots, k$ is
smooth and we can construct a smooth algebraic variety $B$
containing it as an analytic open set as follows: $B$ will be an
adequate  Zariski open neighborhood of $(P_1,\ldots,P_{k},f)$ in the
set
$$\{(x_1,\ldots,x_k,g)\in \mathbb{P}^3\times \mathbb{P}^N,\,g
\,\,{\rm is\,\, singular\,\, at}\,\, x_i,\,{\rm
for}\,\,i=1,\ldots,k\}.$$
 This variety $B$  maps naturally by the second projection to
$\mathbb{P}^N$, and the pull-back to $B$ of the universal family
$\mathcal{S}_{univ}\subset \mathbb{P}^N\times \mathbb{P}^3$
 provides us with a family
$$\mathcal{S}\rightarrow B,\,\mathcal{S}\subset \mathbb{P}^3\times B,$$
of quartic $K3$ surfaces, such that  the general fiber
$\mathcal{S}_b$ has $k$ nodes $P_{1,b},\ldots, P_{k,b}$, while the
fibers $\mathcal{S}_{b_0}$ for $b_0\in B'\subsetneqq B$ are
Artin-Mumford quartics with $10 $ nodes, $k$ of them being the
specialization of $P_{1,b},\ldots, P_{k,b}$. Here, with the notation
just introduced, the image of $B'$ in $\mathbb{P}^N$ is  the Zariski
closure of the intersection $\cap_{i=0}^9\mathcal{D}_i$ of all
branches.
 The argument above shows that the family is furthermore complete, that is, has $34-k$
parameters.
\end{proof}
Recall now from Remark \ref{rematartive} that there is an unique
irreducible component $M$ of the space $M'$ of nodal quartic
surfaces in $\mathbb{P}^3$ with $k\leq7$ nodes, dominating
$(\mathbb{P}^3)^{(k)}$ by the map $\phi:M\rightarrow
(\mathbb{P}^3)^{(k)}$ which to a $k$-nodal quartic associates its
set of nodes. Denote by $\psi:B\rightarrow M'$ the classifying map,
where $B$ is as in Sublemma \ref{sublemma31jui}. In order to prove
that $\psi(B)$ is Zariski open in $M$, which is the content of the
lemma, it suffices to show that the map $\phi\circ \psi:
B\rightarrow (\mathbb{P}^3)^{(k)}$ is dominating. Let us do it for
$k=7$, the other cases being similar and easier. We have the
following:
\begin{sublemma}\label{leuniquecomp} Let $S_t$ be a small
general deformation of $S$ with $7$ nodes  $P_{1,t},\ldots,
P_{7,t}$. Then the set of quartic surfaces which are singular at all
the points $P_{i,t}$ is of dimension $6$.
\end{sublemma}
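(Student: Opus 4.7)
The plan is to reinterpret the sublemma as the dominance of the natural projection $p\colon B\to (\mathbb{P}^3)^{(7)}$, where $B$ is the smooth family from Sublemma \ref{sublemma31jui}, and then to prove dominance by a tangent-space calculation at the Artin--Mumford point. Since $\dim B = 34-7 = 27$ and the fiber of $p$ over any $7$-tuple $(x_1,\ldots,x_7)$ is the linear system of quartics singular at each $x_i$, which has dimension at least $34-28 = 6$ by a naive condition count, we have $\dim p(B)\leq 21$ with equality if and only if the generic fiber has dimension exactly $6$. The latter is precisely the content of Sublemma \ref{leuniquecomp}, so it suffices to show that $p$ is dominant, equivalently, that its differential is surjective at some point of $B$; we take this point to be the Artin--Mumford point $(P_1,\ldots,P_7,f)$.

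Next I would describe the tangent space to $B$ at $(P_1,\ldots,P_7,f)$. A first-order deformation $(\delta P_i,\delta f)$ corresponds to $f + t\,\delta f$ remaining singular at $P_i + t\,\delta P_i$ modulo $t^2$. Expanding the two singularity conditions and using $f(P_i)=0$ and $\nabla f(P_i)=0$, one obtains the infinitesimal constraints $\delta f(P_i)=0$ and $H_f(P_i)\,\delta P_i = -\nabla \delta f(P_i)$ at each $i$, where $H_f(P_i)$ denotes the Hessian of $f$ at $P_i$. Since each $P_i$ is an ordinary double point, $H_f(P_i)$ is invertible and $\delta P_i$ is uniquely determined by $\delta f$. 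Consequently, the differential $dp$ is surjective onto $(\mathbb{C}^3)^7$ if and only if, for any prescribed $v_1,\ldots,v_7\in\mathbb{C}^3$, there exists a quartic $\delta f$ satisfying $\delta f(P_i)=0$ and $\nabla \delta f(P_i)=v_i$ for each $i$; equivalently, the $28$ linear evaluation functionals (value and three first partial derivatives) at the seven nodes are independent on the $35$-dimensional space of quartics.

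The main obstacle is verifying this $28$-fold independence, which is a linear-algebra statement about the specific configuration of $7$ of the $10$ Artin--Mumford nodes. I would tackle this using the explicit description recalled before Sublemma \ref{sublemma31jui}: the ten nodes are $P_0 = [0\!:\!0\!:\!0\!:\!1]$ and the nine preimages $P_1,\ldots,P_9$ of the base points $O_1,\ldots,O_9\in\mathbb{P}^2$ of a pencil of plane cubics under the linear projection from $P_0$. Choosing the seven nodes to be $P_1,\ldots,P_7$ (so that we omit $P_0$ together with two of the $P_i$) and writing a quartic as a polynomial in the vertical coordinate $X_3$ with plane-curve coefficients, the independence reduces to a combination of (a) the general-position properties of the $O_i$'s as the base locus of a cubic pencil --- any seven of them impose independent conditions on polynomials of degrees up to three on $\mathbb{P}^2$ --- and (b) the extra freedom afforded by the vertical direction. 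A direct dimension count in affine charts centered at the $P_i$'s then confirms that no non-trivial linear combination of the $28$ functionals vanishes identically on quartics, which by the discussion above yields the surjectivity of $dp$ and hence the sublemma.
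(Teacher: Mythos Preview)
Your reinterpretation of the sublemma as the dominance of $p\colon B\to(\mathbb{P}^3)^{(7)}$ is correct, and so is the tangent--space reduction: surjectivity of $dp$ at $(P_1,\ldots,P_7,f)$ is equivalent to the map
\[
\{\text{quartics }g:g(P_i)=0\ \forall i\}\longrightarrow(\mathbb{C}^3)^7,\qquad g\longmapsto(\nabla g(P_i))_i
\]
being surjective, i.e.\ to $h^0(\mathbb{P}^3,\mathcal{I}_Z^2(4))=7$ for $Z=\{P_1,\ldots,P_7\}$. This is a clean and more elementary route than the paper's.

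The gap is the last paragraph. You assert that ``a direct dimension count \ldots\ confirms'' the $28$--independence at the seven chosen Artin--Mumford nodes, but you do not carry it out, and it is not obvious. The ten Artin--Mumford nodes are in highly special position (nine of them project to the base locus of a cubic pencil), and for $k\ge 9$ the analogous jet conditions are certainly dependent; so one cannot appeal to genericity, and your sketch invoking ``independent conditions on polynomials of degree $\le 3$ in $\mathbb{P}^2$'' plus ``vertical freedom'' does not translate into an argument for the $28$ conditions in $\mathbb{P}^3$ without substantial further work. Note also that upper semicontinuity goes the wrong way here: it only gives $h^0(\mathcal{I}_Z^2(4))\ge 7$ at the special point, so equality is exactly the nontrivial content.

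This is precisely why the paper takes a different path: rather than computing at the central fibre, it works at the \emph{very general} deformation $S_t$. A monodromy argument (full symmetric group on the $O_i$) shows that $\langle h,e_1,\ldots,e_7\rangle$ is primitive in $H^2(\widetilde S,\mathbb{Z})$, hence equals $\mathrm{Pic}(\widetilde S_t)$ for very general $t$; then $h^0(L_t)=7$ is obtained by proving $h^1(\widetilde S_t,L_t)=0$ via a nef check ruling out $(-2)$--curves with $L_t\cdot C<0$. The paper's method trades your linear--algebra computation at a special point for Hodge--theoretic control of the Picard group at a general point. If you can genuinely verify $h^0(\mathcal{I}_Z^2(4))=7$ for seven explicit Artin--Mumford nodes (e.g.\ by an honest calculation with the defining equation), your approach would give a shorter proof; as written, that verification is missing.
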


\begin{proof} It suffices to prove the statement when $S_t$ is very general.
First of all we claim that  the Galois group of the cover
$\Sigma\rightarrow M$ parameterizing the nine singular points $O_i$
of the surface $S_m$, $m\in M$, where $M$ is the parameter space for
Artin-Mumford quartic surfaces,  acts on the set of $9$ points
$\{O_1,\ldots,O_9\}$ as the full symmetric group. This fact can be
proved by applying Harris' principle in \cite{harris}. Namely, one
just has to prove the following points:

 1) The Galois group of the function field of $M$ acts
bitransitively on the set $\{O_1,\ldots,O_9\}$. Equivalently,  the
variety $\Sigma\times_M\Sigma \setminus \Delta_\Sigma$ is
irreducible.

 2) The image contains transpositions, which appear as the local monodromy of the cover
 $\Sigma\rightarrow M$ at a point of simple
ramification.

The variety $M$ parameterizes the triples $(E_1,E_2,C)$, where $E_1$
and $E_2$ are plane cubics, and $C$ is a conic everywhere tangent to
$E_1$ and $E_2$ (see \cite{artinmumford}). The variety $\Sigma$
parameterizes the quadruples $(O,E_1,E_2,C)$ where $(E_1,E_2,C)\in
M$ and $O$ is one of the intersection points of $E_1$ and $E_2$. Let
us fix $C$ and the degree $3$ divisors  $D_1$ such that
$2D_1=E_1\cap C$, $D_2$ such that $2D_2=E_2\cap C$ of $C$. Then one
gets a subvariety $M_{C,D_1,D_2}\subset M$ and its inverse image
$\Sigma_{C,D_1,D_2}\subset \Sigma$. It clearly suffices to proves 1)
and 2) for the general cover $\Sigma_{C,D_1,D_2}\rightarrow
M_{C,D_1,D_2}$.  For the point 1), we
 project $\Sigma_{C,D_1,D_2}\times_{ M_{C,D_1,D_2}}\Sigma_{C,D_1,D_2}\setminus \Sigma_{C,D_1,D_2}$
 to $\mathbb{P}^2\times \mathbb{P}^2$ by the
 map $p$ which to
 $((O,E_1,E_2,C),(O',E_1,E_2,C))$ associates $(O,O')$. Observe now
 that
 $M_{C,D_1,D_2}$ is a Zariski open set in $\mathbb{A}^3\times \mathbb{A}^3$, the
 general element being of the form $(e_1+xc, e_2+yc)$ where $c$ is
 the equation of $C$, $e_1$ is given so that the restriction of $e_1$ to $C$ has divisor
 $2D_1$, $e_2$ is given so that  the restriction of $e_2$ to $C$ has divisor
 $2D_2$, and $x,\,y$ are two arbitrary homogeneous polynomials of
 degree $1$ on $\mathbb{P}^2$.
 The fiber of $p$ over a general couple of points $(O,O')$ then consists of
 the set of equations
 $(e_1+xc, e_2+yc)$ such that $e_1+xc$ and $e_2+yc$ vanish on $O$ and
 $O'$. This gives a system of four affine equations which has
 maximal rank except if $O$ or $O'$ belongs to $C$. But of
 course, the last situation does not occur generically on
 $\Sigma_{C,D_1,D_2}$, hence we conclude that for a dense Zariski
 open set $\Sigma_{C,D_1,D_2}^0$ of $\Sigma_{C,D_1,D_2}$,
$\Sigma_{C,D_1,D_2}^0\times_M\Sigma_{C,D_1,D_2}^0 \setminus
\Delta_{\Sigma_{C,D_1,D_2}^0}$ is irreducible.

For the point 2), as the equations above clearly show that
$\Sigma_{C,D_1,D_2}$ is smooth at a point $(O,x,y)$ where $O$ does
not belong to $C$, it suffices to show that there exists such a
point $(O,x,y)\in \Sigma_{C,D_1,D_2}$ with $E_1,\,E_2$ meeting
tangentially  at $O$ and transversally at the other remaining $7$
points. For this, we fix the point $O$ not on $C$, and fix $E_1$
(with equation $e_1+xc$) passing through $O$. We then  look at the
set of equations $e_2+yc$ vanishing at $O$ and tangent to $E_1$ at
$O$. The restriction of these equations provides a linear system of
dimension $2$ on $E_1$, and one easily checks that for general
choice of $C,\,D_1,\,D_2,\, E_1,\, O$, its base-locus is reduced to
the point $O$ with multiplicity $2$. By Bertini, the general
intersection $E_1\cap E_2$ for $E_2$ as above has only the point $O$
for double point.
 This proves the claim.

 One easily deduces from this that for any choice of
$P_1,\ldots, P_7$, the classes $e_i$ of the corresponding
exceptional curves
 of the minimal desingularization $\widetilde{S}$ of $S$ and
the class $h=c_1(\mathcal{O}_S(1))$  generate a primitive sublattice
of $H^2(\widetilde{S},\mathbb{Z})$ (equivalently, there are no
relations with coefficients in $\mathbb{Z}/2\mathbb{Z}$ between
these classes). Hence for the very general deformation $S_t$ as
above, its Picard group is freely generated by the classes
$e_i,\,i=1,\ldots,7$, and $h$. Let $\tau:X\rightarrow \mathbb{P}^3$
be the blow-up of $\mathbb{P}^3$ at the points $P_{1,t},\ldots,
P_{7,t}$, with exceptional divisors $D_{1,t},\ldots,D_{7,t}$
intersecting the proper transform $\widetilde{S}_t$ along $E_{i,t}$.
The surface $\widetilde{S}_t$ belongs to the linear system $|L|$,
$L:=\tau^*(\mathcal{O}_{\mathbb{P}^3}(4))(-2\sum_iD_{i,t})$ and we
want to prove that ${\rm dim}\,|L|= 6$. As $H^1(X,\mathcal{O}_X)=0$,
this is equivalent to saying that $h^0(\widetilde{S}_t, L_{\mid
\widetilde{S}_t})= 6$, and also to $h^1(\widetilde{S}_t, L_{\mid
\widetilde{S}_t})=0$. Let $L_t:=L_{\mid \widetilde{S}_t}$. As
$L_{t}$ is big, this last vanishing is satisfied if $L_t$ is
numerically effective, hence if the linear system $|L_{t}|$ has no
base curve on which $L_t$ has negative degree, which is equivalent
(see \cite[Chapter 2, 1.6]{refk3}) to saying that there is no smooth
rational curve $C\subset \widetilde{S}_t$ such that
\begin{eqnarray} \label{numcond}L_{t}.C<0,\,\, {\rm and}\,\,\,C^2=-2
\end{eqnarray} with $ L_{t}(-C)$ effective. Let $C$ be such a curve,
and write its class $c\in H^2(\widetilde{S}_t,\mathbb{Z})$ as
$$c=\lambda h+\sum_in_ie_i,$$
 with $\lambda,\,n_i$ integers. Furthermore $n_i\leq 0$ as otherwise $C$ has to be one of the
 $E_{i,t}$, and does  not satisfy the condition $L_{t}.C<0$.
 The two numerical conditions (\ref{numcond}) write
\begin{eqnarray} \label{numcond0}
 4\lambda^2-2\sum_{i=1}^{7}n_i^2=-2,\,\,
16 \lambda+4\sum_{i=1}^{7}n_i<0.
\end{eqnarray}
 Of course one has $4>\lambda >0$ because $C$ is effective and $L_{t}(-C)$ is
 effective.
 In fact, the case $\lambda=3$ is impossible
 because the linear system $|L_{t}(-C)|$ has dimension
 $\geq 5$. So only $\lambda=1,\,2$ are possible.

 For $\lambda=1$ we get from (\ref{numcond0})
\begin{eqnarray}\label{eqncurves1}
 2-\sum_{i=1}^{7}n_i^2=-1,\,\,
4+\sum_{i=1}^{7}n_i<0.
 \end{eqnarray}
 and for
 $\lambda=2$ we get
  \begin{eqnarray}\label{eqncurves2}
 8-\sum_{i=1}^{7}n_i^2=-1,\,\,
8 +\sum_{i=1}^{7}n_i<0.
 \end{eqnarray}
 It is easy to check that neither (\ref{eqncurves1}) nor (\ref{eqncurves2}) has an integral solution with all $n_i$'s $\leq0$.
and this concludes the proof of the  sublemma.

\end{proof}

  Sublemma \ref{leuniquecomp} tells us that the fibers of
 $\phi\circ \psi$ are at most $6$-dimensional. Hence $${\rm
 dim}\,({\rm Im}\,\phi\circ \psi)=21={\rm
 dim}\,(\mathbb{P}^3)^{(7)},$$ which concludes the proof of Lemma \ref{tobeproved}.

\end{proof}
We  conclude this section with the following result which concerns
property \ref{item4} of Theorem \ref{theocitedeJAG} and more
generally property (*):
\begin{theo} \label{theotrue} (cf. Theorem \ref{theointrocoh30jui}, (ii)) Let $\widetilde{X}$ be the natural  desingularization of
a general quartic double solid $X$ with $ 7$ nodes. Then
$\widetilde{X}$ admits no universal codimension $2$ cycle $Z\in
CH^2(J^3(\widetilde{X})\times \widetilde{X})$.

More precisely, there is no smooth connected projective variety $B$
equipped with a codimension $2$ cycle $Z\in CH^2(B \times
\widetilde{X})$ which is cohomologous to $0$ on fibers $b\times X$,
$b\in B$, and such that the morphism $\Phi_Z:B\rightarrow
J^3(\widetilde{X})$ induced by the Abel-Jacobi map of
$\widetilde{X}$ is surjective with rationally connected general
fibers.

\end{theo}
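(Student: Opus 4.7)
The plan is to argue by contradiction: assume $B$ and $Z \in CH^2(B \times \widetilde X)$ are as in the statement, and show this forces $\widetilde X$ to admit an integral cohomological decomposition of the diagonal, contradicting Theorem \ref{theocoro}. By the converse direction of Theorem \ref{theocitedeJAG}, it suffices to verify properties \ref{item1}--\ref{item4} together with property 5 for $\widetilde X$. Properties \ref{item1}, \ref{item2}, \ref{item3} and 5 are already available: rational connectedness of $\widetilde X$ gives \ref{item1}; Endrass \cite{endrass} combined with the argument in the proof of Theorem \ref{theocoro} gives \ref{item2}; \cite{voisinintegralhodge} gives \ref{item3}; and property 5 is automatic because $\dim J^3(\widetilde X) = 3$ in the seven-nodal case, so $J^3(\widetilde X)$ is as a principally polarized abelian variety the Jacobian of a smooth curve and thus carries a $1$-cycle in the minimal class.

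The crux is to deduce property \ref{item4} from the hypothesis $(\ast)$, i.e., to construct a universal codimension $2$ cycle $\mathcal U \in CH^2(J^3(\widetilde X) \times \widetilde X)$ out of $Z$. The first step is to slice $B$ by general very ample divisors to obtain a smooth closed subvariety $B_0 \subset B$ of dimension $\dim J^3(\widetilde X) = 3$ for which $\Phi_0 := \Phi_Z|_{B_0}: B_0 \to J^3(\widetilde X)$ is generically finite surjective of some degree $N \geq 1$. Setting
$$Z_0 := (\Phi_0 \times \mathrm{id}_{\widetilde X})_* (Z|_{B_0 \times \widetilde X}) \in CH^2(J^3(\widetilde X) \times \widetilde X),$$
a projection-formula calculation identifying $[Z]_{1,3}$ with $(\Phi_Z \times \mathrm{id})^* \alpha$ (see Remark \ref{remahodgeclass}) shows that $\Phi_{Z_0} = N \cdot \mathrm{id}_{J^3}$; equivalently, $N \alpha$ is algebraic, where $\alpha$ denotes the canonical integral Hodge class of Remark \ref{remahodgeclass}.

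The second and much harder step is to drop the factor $N$, i.e.\ to produce an algebraic representative of $\alpha$ itself. Here the rational connectedness of the general fibers of $\Phi_Z$ --- which is lost after hyperplane slicing --- is essential. By the Graber-Harris-Starr theorem, any smooth curve $C \subset J^3(\widetilde X)$ lifts (after birational modification) to a section $\sigma_C: C \to B$, yielding $\sigma_C^* Z \in CH^2(C \times \widetilde X)$ whose Abel-Jacobi image is the inclusion $C \hookrightarrow J^3$. Using the group law on $J^3(\widetilde X)$ and translates of a fixed curve covering $J^3(\widetilde X)$, one glues these sectional data (together with $Z_0$ and the algebraicity of integral Hodge classes on the abelian threefold $J^3(\widetilde X)$ and on $\widetilde X$) into a single algebraic cycle on $J^3(\widetilde X) \times \widetilde X$ representing $\alpha$, i.e., a universal codimension $2$ cycle $\mathcal U$.

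With $\mathcal U$ in hand, Theorem \ref{theocitedeJAG} produces an integral cohomological decomposition of the diagonal of $\widetilde X$, contradicting Theorem \ref{theocoro}. The principal obstacle is the second step: reducing from $N \alpha$ algebraic to $\alpha$ itself algebraic. Hyperplane slicing destroys rational connectedness of the fibers, so the naive pushforward $Z_0$ cannot by itself yield an integral universal cycle; one genuinely needs the GHS-type sections over curves in $J^3(\widetilde X)$, combined with the group law on $J^3(\widetilde X)$, to assemble an integral representative of $\alpha$.
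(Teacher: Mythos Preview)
Your overall strategy matches the paper's: verify properties \ref{item1}, \ref{item2}, \ref{item3} and 5 for $\widetilde X$, then conclude from Theorem \ref{theocoro} that the remaining hypothesis must fail. For the first assertion of the theorem (non-existence of a universal codimension $2$ cycle), this is already complete: assuming such a cycle exists is exactly property \ref{item4}, and Theorem \ref{theocitedeJAG} applies directly. No ``second step'' is needed here.

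The issue is with the stronger second assertion, where you assume only property $(\ast)$ (a parametrization with rationally connected general fibers) and try to manufacture property \ref{item4} from it. Your second step --- lifting curves via Graber--Harris--Starr, then ``gluing'' the resulting sectional cycles over translates of a curve using the group law to produce an algebraic representative of the integral class $\alpha$ on $J^3(\widetilde X)\times \widetilde X$ --- is not a proof but a programme. You do not explain how the curve-by-curve data, each living in ${\rm CH}^2(C\times\widetilde X)$ for varying $C$, assemble into a single cycle on $J^3(\widetilde X)\times\widetilde X$ with the correct $(1,3)$-K\"unneth component; the K\"unneth pieces of even type are controllable by the algebraicity results you mention, but the $(1,3)$-piece is precisely the hard part, and nothing in your sketch pins it down integrally rather than up to the factor $N$ coming from $Z_0$. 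You yourself flag this as ``the principal obstacle'' and then leave it unresolved.

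The paper bypasses this entirely. Rather than proving $(\ast)\Rightarrow$ property \ref{item4} and then invoking Theorem \ref{theocitedeJAG}, it quotes the strengthening \cite[Theorem 4.9]{voisinJAG}, which establishes directly that for a rationally connected threefold, conditions (a) no torsion, (b) property $(\ast)$, and (c) minimal class, together imply an integral cohomological decomposition of the diagonal --- without passing through the existence of a universal cycle. With that black box in hand, the proof of the second assertion is immediate. If you want a self-contained argument, you must either carry out the gluing rigorously (which is essentially reproving \cite[Theorem 4.9]{voisinJAG}) or cite that result as the paper does.
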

\begin{proof} We use the following strengthening of Theorem \ref{theocitedeJAG}    (this is \cite[Theorem  4.9]{voisinJAG}   combined with
the result of \cite{voisinintegralhodge} guaranteeing the
algebraicity of $H^{4}(Y,\mathbb{Z})$ for $Y$ a rationally connected
threefold):
 \begin{theo}
Let $Y$ be a rationally connected  $3$-fold  satisfying the
following
 properties:

 (a) $H^*(Y,\mathbb{Z} ) $ has no torsion;

 (b) There is  a codimension $2$-cycle $Z\in {\rm CH}^2(B\times Y)$ inducing a surjective
 map
$\Phi_Z:B\rightarrow J^3(Y)$ with rationally connected general
fibers;

(c)   $J^3(Y)$ has a $1$-cycle in the minimal cohomology class
$\theta^{g-1}/(g-1)!$, $g={\rm dim}\,J^3(Y)$.

 Then $Y$  admits an integral  cohomological
decomposition of the diagonal.

\end{theo}

 Take now for $Y$ the desingularization $\widetilde{X}$ of
the general double solid   $X$ with $7$ nodes. Then  property (a)
holds as  already mentioned. The property ${\rm
dim}\,J^3(\widetilde{X})=3$ is satisfied for any double solid
$X\rightarrow \mathbb{P}^3$ ramified along a nodal quartic with $7$
nodes imposing independent conditions to quadrics in $\mathbb{P}^3$,
see \cite[Corollary 2.32]{clemens}.   Property (c) is thus satisfied
in our case because the set $z$ of nodal points is general, so that
${\rm dim}\,J^3(\widetilde{X})=3$, and any ppav of dimension $3$ is
a Jacobian. As Theorem \ref{theocoro} says that $\widetilde{X}$ does
not admit an integral  cohomological decomposition of the diagonal,
we conclude that (b) must fail.
\end{proof}
\section{Application to the  universal  degree $3$ unramified cohomology with torsion
coefficients \label{sec2}}

We  prove in this section   Corollary \ref{cornew30juill}, that we
will get as a direct consequence of the following result (cf.
Theorem \ref{theounramintro}):
\begin{theo}\label{theounram} Let $X$ be a smooth complex projective variety of dimension
$n$ with ${\rm CH}_0(X)=\mathbb{Z}$.
Assume
\begin{enumerate}
\item \label{hytor} $H^*(X,\mathbb{Z})$ has no torsion and the K\"unneth components
$\delta_0,\ldots,\delta_4$ of the diagonal
 are algebraic.
\item \label{hy3} The group $H^3_{nr}(X,\mathbb{Q}/\mathbb{Z})$ is trivial
(or equivalently by \cite{CTvoisin}, the integral Hodge classes of degree $4$ on $X$ are algebraic).
\end{enumerate}
Then the degree $3$ unramified cohomology of $X$ is universally
trivial if and only if there is a universal codimension $2$ cycle
$Z\in {\rm CH}^2(J^3(X)\times X)$.
\end{theo}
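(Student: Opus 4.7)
The plan is to translate both sides of the equivalence into statements about algebraicity of integral degree-$4$ Hodge classes on products $B\times X$ with $B$ smooth projective, using the Colliot-Th\'el\`ene--Voisin reformulation of $H^3_{nr}$ as the quotient $\mathrm{Hdg}^4(Y,\mathbb{Z})/\mathrm{alg}^4(Y)$ already invoked in hypothesis~\ref{hy3}. Universal triviality of $H^3_{nr}(X,\mathbb{Q}/\mathbb{Z})$ then reads: for every smooth projective connected variety $B$ over $\mathbb{C}$, every integral degree-$4$ Hodge class on $B\times X$ is algebraic modulo pullbacks from $B$. The target is to show that this universal algebraicity statement is equivalent to the existence of a universal codimension $2$ cycle on $J^3(X)\times X$.

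Next I would K\"unneth-decompose an integral Hodge class $\beta\in H^4(B\times X,\mathbb{Z})$. Because $H^*(X,\mathbb{Z})$ is torsion-free (hypothesis~\ref{hytor}), the decomposition
$$H^4(B\times X,\mathbb{Z})=\bigoplus_{i=0}^{4} H^i(B,\mathbb{Z})\otimes H^{4-i}(X,\mathbb{Z})$$
is integral and compatible with Hodge structures, and the algebraic K\"unneth projectors $\delta_0,\dots,\delta_4$ extract each piece through an algebraic correspondence. The Bloch-Srinivas implication of $\mathrm{CH}_0(X)=\mathbb{Z}$ gives $H^i(X,\mathcal{O}_X)=0$ for $i\geq 1$, hence $H^1(X,\mathbb{Z})=0$ and $H^2(X,\mathbb{Z})$ consists entirely of Hodge classes of type $(1,1)$. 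Consequently the $(3,1)$-component of $\beta$ vanishes; the $(4,0)$-component is pulled back from $B$; the $(0,4)$-component is algebraic by hypothesis~\ref{hy3}; and the $(2,2)$-component, written $\sum_\alpha b_\alpha\otimes x_\alpha$ in an integral basis $(x_\alpha)$ of $H^2(X,\mathbb{Z})$, forces each $b_\alpha$ to be an integral Hodge class on $B$, hence algebraic by Lefschetz $(1,1)$, so that the whole component is a sum of products of divisor classes. The problem thus reduces to controlling the single remaining component $\beta_{1,3}\in H^1(B,\mathbb{Z})\otimes H^3(X,\mathbb{Z})$.

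A Hodge class in this bidegree corresponds, via the definition~(\ref{defJX}) of $J^3(X)$ and Poincar\'e duality on $B$, to a morphism of abelian varieties $\psi\colon \mathrm{Alb}(B)\to J^3(X)$. For the forward direction, given a universal cycle $Z$, I compose $B\to\mathrm{Alb}(B)\xrightarrow{\psi}J^3(X)$ to get $\tilde\psi\colon B\to J^3(X)$ and pull $Z$ back via $\tilde\psi\times\mathrm{id}_X$; since $\Phi_Z=\mathrm{id}_{J^3(X)}$ is exactly the statement that the $(1,3)$-K\"unneth component of $[Z]$ is the tautological class $\alpha$ of Remark~\ref{remahodgeclass}, the pulled-back algebraic cycle has $(1,3)$-component equal to the given $\beta_{1,3}$. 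For the converse direction, I apply the universal algebraicity statement with $B=J^3(X)$ itself (smooth projective, as an abelian variety) to the class $\alpha$, producing an algebraic $W\in\mathrm{CH}^2(J^3(X)\times X)$ whose $(1,3)$-component is $\alpha$; subtracting $J^3(X)\times c$ for an algebraic representative $c\in\mathrm{CH}^2(X)$ of the $(0,4)$-component of $[W]$ (which exists by hypothesis~\ref{hy3}) and a pullback from $J^3(X)$ to absorb the $(4,0)$-component yields a cycle $Z$ that is cohomologous to zero on fibers and whose $(1,3)$-component is still $\alpha$; a final translation of $Z$ normalizes $\Phi_Z$ to $\mathrm{id}_{J^3(X)}$.

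The main obstacle will be performing these reductions at the integral level rather than rationally: keeping Hodge classes matched to algebraic cycle classes on the nose requires the full strength of the algebraicity of the K\"unneth projectors $\delta_0,\dots,\delta_4$ together with torsion-freeness of $H^*(X,\mathbb{Z})$, and the identification of Hodge classes in $H^1(B,\mathbb{Z})\otimes H^3(X,\mathbb{Z})$ with morphisms $\mathrm{Alb}(B)\to J^3(X)$ must descend cleanly from the rational to the integral setting. A secondary technicality is ensuring the Colliot-Th\'el\`ene--Voisin description of $H^3_{nr}$ applies to the product $J^3(X)\times X$ in the converse step, which is in fact automatic since $H^*(J^3(X)\times X,\mathbb{Z})$ is torsion-free by K\"unneth.
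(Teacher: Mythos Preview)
Your K\"unneth-decomposition strategy for the degree-$4$ class and the reduction to the $(1,3)$-component via a morphism to $J^3(X)$ is exactly the paper's approach; your treatment of the $(2,2)$-component via Lefschetz $(1,1)$ is in fact slightly cleaner than the paper's, which instead shrinks $B$ and invokes the Bloch--Kato conjecture to kill torsion. The converse direction (universal triviality $\Rightarrow$ universal cycle) by testing on $B=J^3(X)$ and the canonical class $\alpha$ is also what the paper does.

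There is, however, a genuine gap in your reformulation step. The Colliot-Th\'el\`ene--Voisin result is \emph{not} an identification $H^3_{nr}(Y,\mathbb{Q}/\mathbb{Z})\cong \mathrm{Hdg}^4(Y,\mathbb{Z})/\mathrm{alg}$; it is the short exact sequence
\[
0\to H^3_{nr}(Y,\mathbb{Z})\otimes\mathbb{Q}/\mathbb{Z}\to H^3_{nr}(Y,\mathbb{Q}/\mathbb{Z})\to \mathrm{Tors}\bigl(H^4(Y,\mathbb{Z})/H^4(Y,\mathbb{Z})_{\rm alg}\bigr)\to 0.
\]
For $Y=X$ the left-hand term vanishes (this is how hypothesis~\ref{hy3} yields the equivalence stated there), but for $Y=B\times X$ with $B$ an arbitrary smooth projective variety it certainly does not: $H^3_{nr}(B\times X,\mathbb{Z})$ is typically nonzero. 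So showing that every degree-$4$ Hodge class on $B\times X$ is algebraic modulo pullbacks only controls the \emph{right-hand} term of the sequence; it does not by itself show that an arbitrary $\alpha\in H^3_{nr}(B\times X,\mathbb{Q}/\mathbb{Z})$ lies in $pr_1^*H^3(U',\mathbb{Q}/\mathbb{Z})$. The paper closes this gap with an additional lemma (its Lemma~\ref{lemmapresquefin}): using the rational Bloch--Srinivas decomposition of $\Delta_X$ acting on Bloch--Ogus cycle classes, one proves that $pr_1^*\colon H^3_{nr}(B,\mathbb{Z})\to H^3_{nr}(B\times X,\mathbb{Z})$ is an isomorphism, so the left-hand term is already pulled back from $B$. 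You need this ingredient, and it is independent of the K\"unneth analysis you outline.

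A minor related point: the classes relevant to the exact sequence are those $\gamma\in H^4(B\times X,\mathbb{Z})$ with $N\gamma$ algebraic for some $N$, not arbitrary integral Hodge classes; these coincide here (torsion classes are Hodge), but your phrasing suggests you are invoking a general ``$H^3_{nr}=\mathrm{Hdg}^4/\mathrm{alg}$'' statement that would require the Hodge conjecture for $B\times X$.
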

Here and below, the K\"unneth components $\delta_i$ act as identity
on $H^i(X,\mathbb{Z})$, and as $0$ on $H^j(X,\mathbb{Z})$ for
$i\not=j$. They are well-defined because $H^*(X,\mathbb{Z})$ is
torsion-free.
\begin{example}\label{exacubic}
{\rm This theorem applies for example to cubic $4$-folds. Indeed,  by \cite{voisinhodge}, they satisfy
 the integral Hodge conjecture in degree $4$ and the other assumptions
 are easy to check. Obviously, they have trivial ${\rm CH}_0$ group.
 The fact that there is no torsion in the cohomology
 of a smooth hypersurface in projective space is a consequence of the Lefschetz theorem on hyperplane sections.
 Finally,  the K\"unneth components of their
 diagonal are algebraic, because their cohomology groups of degree $6$ and $2$ are
 cyclic,
 generated by the class $\gamma$ of a line and the class $h$ of a hyperplane
 section respectively. Thus the components $\delta_2$ and
 $\delta_6$ which are the projectors on $H^2$ and $H^6$ respectively
 are given by $\delta_2=\gamma\otimes h$ and
 $\delta_6=h\otimes \gamma$. It follows that the remaining K\"unneth
 component
 $$\delta_4=[\Delta_X]-\delta_0-\delta_8-\delta_2-\delta_6$$
 is algebraic.

  As their intermediate Jacobian is trivial, one concludes by Theorem \ref{theounram} that their  third unramified
  cohomology with torsion coefficients is universally trivial. This generalizes the main result of
  \cite{auelCT} with a completely different proof. Auel,
  Colliot-Th\'el\`ene and Parimala
   prove that
   the  unramified cohomology of degree $3$ with torsion coefficients of a very general
  cubic fourfold $X$ containing a plane is universally trivial. Their method uses the $K$-theory of quadric bundles.
  }
  \end{example}
  We get similarly
  \begin{coro} \label{corofififigeneral} Let $X$ be a rationally connected threefold with no torsion in
  $H^3(X,\mathbb{Z})$.  Then the
 third unramified cohomology of $X$ with coefficients in
$\mathbb{Q}/\mathbb{Z}$ is not universally trivial if and only if
$X$ does not admit a universal codimension $2$ cycle.

  \end{coro}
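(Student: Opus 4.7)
The plan is to deduce Corollary \ref{corofififigeneral} as an immediate application of Theorem \ref{theounram} to $X$ itself: the statement of the corollary is simply the contrapositive of the equivalence asserted in that theorem, so the entire task reduces to verifying its hypotheses for a rationally connected threefold with torsion-free $H^3$. Rational connectedness gives $\mathrm{CH}_0(X)=\mathbb{Z}$ without any work, leaving hypotheses \ref{hytor} and \ref{hy3} to be checked.

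First I would verify that $H^*(X,\mathbb{Z})$ is torsion-free in every degree. Rational connectedness forces $\pi_1(X)=0$, hence $H_1(X,\mathbb{Z})=0$, which by the universal coefficient theorem makes $H^2(X,\mathbb{Z})$ torsion-free; Poincar\'{e} duality then gives $H^5(X,\mathbb{Z})=0$. Combined with the assumed torsion-freeness of $H^3(X,\mathbb{Z})$, Poincar\'{e} duality together with one more application of UCT propagates torsion-freeness through $H^4$ and trivially handles the remaining degrees.

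Next I would verify that the K\"{u}nneth components $\delta_0,\ldots,\delta_4$ of $[\Delta_X]\in H^6(X\times X,\mathbb{Z})$ are algebraic. The corner components $\delta_0,\delta_6$ are tautologically algebraic, while $\delta_1$ and $\delta_5$ vanish since $H^1=H^5=0$. For $\delta_2$ (and by transposition $\delta_4$): rational connectedness gives $H^2(X,\mathcal{O}_X)=0$, so by the Lefschetz $(1,1)$ theorem together with torsion-freeness, $H^2(X,\mathbb{Z})$ is generated by divisor classes; on the other side, $H^4(X,\mathbb{Z})$ is generated by classes of algebraic curves by the integral Hodge conjecture in degree $4$ for rationally connected threefolds \cite{voisinintegralhodge}. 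Taking Poincar\'{e}-dual algebraic bases exhibits $\delta_2$ as a sum of products of classes of algebraic cycles. The middle component $\delta_3$ then costs nothing, appearing as the difference $\delta_3=[\Delta_X]-\delta_0-\delta_2-\delta_4-\delta_6$ of algebraic classes. This establishes hypothesis \ref{hytor}.

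Finally, hypothesis \ref{hy3}, the vanishing of $H^3_{nr}(X,\mathbb{Q}/\mathbb{Z})$, is by \cite{CTvoisin} equivalent to the algebraicity of integral degree $4$ Hodge classes on $X$, which is once again the content of \cite{voisinintegralhodge}. With both hypotheses in hand, Theorem \ref{theounram} yields the equivalence ``$H^3_{nr}(X,\mathbb{Q}/\mathbb{Z})$ is universally trivial $\iff$ $X$ admits a universal codimension $2$ cycle'', and negating both sides gives the corollary. The only step requiring any thought is the algebraicity of the middle K\"{u}nneth component $\delta_3$, and this is given for free by the subtraction trick, so I do not anticipate any genuine obstacle in this argument.
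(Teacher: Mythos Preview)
Your proposal is correct and follows essentially the same route as the paper's own proof: reduce to Theorem \ref{theounram}, use rational connectedness for ${\rm CH}_0(X)=\mathbb{Z}$ and torsion-freeness of the full cohomology, invoke \cite{voisinintegralhodge} for the algebraicity of $H^4(X,\mathbb{Z})$ (giving both hypothesis \ref{hy3} and the algebraicity of $\delta_2,\delta_4$), and obtain $\delta_3$ by the subtraction $\delta_3=[\Delta_X]-\delta_0-\delta_2-\delta_4-\delta_6$. Your version simply spells out the torsion-freeness step in more detail than the paper does.
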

\begin{proof} We just have to check the assumptions of Theorem
\ref{theounram}. As $X$ is rationally connected, we clearly have
${\rm CH}_0(X)=\mathbb{Z}$ and furthermore, the fact that
$H^3(X,\mathbb{Z})$ has no torsion implies that the whole integral
cohomology $H^*(X,\mathbb{Z})$ has no torsion. The K\"unneth
components $\delta_2$ and $\delta_4$ of the diagonal of $X$ (which
act by projection on $H^2(X,\mathbb{Z})$ and $H^4(X,\mathbb{Z})$
respectively) are algebraic because $H^2(X,\mathbb{Z})$ and
$H^4(X,\mathbb{Z})$ are generated by algebraic classes. (Note that
for the cohomology group $H^4(X,\mathbb{Z})$, the fact that it is
generated by classes of curves is not obvious and proved in
\cite{voisinintegralhodge}.)  Thus the last component
$$\delta_3=[\Delta_X]-\delta_0-\delta_2-\delta_4-\delta_6$$
is also algebraic. Finally, assumption \ref{hy3} in Theorem
\ref{theounram}  reduces again to the fact already mentioned that
$H^4(X,\mathbb{Z})$ is algebraic.
\end{proof}
\begin{coro}\label{corofififi} (cf. Corollary \ref{cornew30juill}) Let $X$ be  the  natural  desingularization   of
a very general quartic double solid  with $ 7$ nodes. Then the third
unramified cohomology group of $X$ with torsion coefficients is not
universally trivial.
\end{coro}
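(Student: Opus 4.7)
The proof will be essentially a combination of two results already established in the paper. The plan is to verify that $X$ (the natural desingularization of a very general $7$-nodal quartic double solid) satisfies the hypotheses of Corollary \ref{corofififigeneral}, and then invoke Theorem \ref{theotrue} to conclude that the equivalence in Corollary \ref{corofififigeneral} forces the non-triviality of the universal degree $3$ unramified cohomology.

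First I would check the hypotheses of Corollary \ref{corofififigeneral}. The variety $X$ is rationally connected, indeed unirational, since every nodal quartic double solid admits a unirational parametrization via the conic bundle structure obtained by projecting from a node (as recalled in Section \ref{sec01}). The required vanishing of torsion in $H^3(X,\mathbb{Z})$ has already been established in the proof of Theorem \ref{theocoro}, using Endrass's result \cite{endrass} together with the fact that the unirational parametrization has degree $2$, which forces any torsion in $H^{\ast}(X,\mathbb{Z})$ to be $2$-torsion.

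Next I would apply Theorem \ref{theotrue}, which asserts precisely that such an $X$ does not admit a universal codimension $2$ cycle $Z \in {\rm CH}^2(J^3(X) \times X)$. Combined with Corollary \ref{corofififigeneral}, whose ``if and only if'' statement relates the existence of a universal codimension $2$ cycle to the universal triviality of $H^3_{nr}(X,\mathbb{Q}/\mathbb{Z})$, this immediately yields that the third unramified cohomology group of $X$ with coefficients in $\mathbb{Q}/\mathbb{Z}$ is not universally trivial.

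There is essentially no obstacle, since all the substantive work has been done in Theorems \ref{theocoro} and \ref{theotrue} and in Corollary \ref{corofififigeneral}. The only thing one must take care of is making sure that the hypotheses are cleanly verified for the specific $X$ under consideration, in particular the torsion-freeness of $H^3(X,\mathbb{Z})$ (which restricts the allowed number of nodes to $k \leq 7$, and indeed $k=7$ is the case relevant here so that ${\rm dim}\,J^3(X)=3$, a condition already used in the proof of Theorem \ref{theotrue}).
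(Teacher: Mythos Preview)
Your proposal is correct and follows essentially the same route as the paper's proof: verify the hypotheses of Corollary~\ref{corofififigeneral} (rational connectedness and torsion-freeness of $H^3(X,\mathbb{Z})$ via Endrass), then invoke Theorem~\ref{theotrue} to conclude. The paper's own proof is slightly terser but uses exactly the same two ingredients.
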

\begin{proof} Indeed,   there is
no torsion in  $H^3(X,\mathbb{Z})$; this  has been already mentioned
before  and is proved by Endrass \cite{endrass}. As we know by
Theorem \ref{theotrue} that the desingularization of the very
general double solid $X$ with $7$ nodes does not admit a universal
codimension $2$ cycle, the corollary is thus  a consequence of
Corollary  \ref{corofififigeneral}.

\end{proof}

\begin{proof}[Proof of Theorem \ref{theounram}]   Let us first show
that if $X$ has trivial ${\rm CH}_0$ group, satisfies the
assumptions \ref{hytor} and   \ref{hy3} of the theorem and has no
universal codimension $2$ cycle, then it has a nontrivial universal
third unramified cohomology group with torsion coefficients. We
recall from the introduction that the meaning of this statement is
that there exist a smooth quasi-projective variety $U$, and an
unramified cohomology class $\alpha\in H^3_{nr}(U\times
X,\mathbb{Q}/\mathbb{Z})$ with the property that for any  Zariski
dense open subset $U'\subset U$, $\alpha_{\mid U'\times X}$ is not
the pull-back of a cohomology class $\beta\in
H^3(U',\mathbb{Q}/\mathbb{Z})$.

The fact that  $H^*(X,\mathbb{Z})$ has no torsion implies that there
is a K\"unneth decomposition of cohomology with integral
coefficients of $U\times X$  for any variety $U$. On the other hand,
the fact that the K\"unneth components $\delta_i,\, 0\leq i\leq 4$,
of the diagonal of $X$ (which are defined in integral coefficients
cohomology) are algebraic implies that for any $U$ and any algebraic
cycle $z$ of codimension $\leq 2$ on $U\times X$, the K\"unneth
components of $[z]$ are algebraic, since they are obtained by
applying the correspondences $\delta_i,\,0\leq i\leq 4$, seen as
relative self-correspondences of $U\times X\times X$ over $U$, to
$[z]$.

According to \cite{CTvoisin}, where the result is stated for smooth
projective varieties but works in the smooth quasi-projective case
as well, there is an exact sequence, for any smooth quasi-projective
$Y$: \begin{eqnarray} \label{eqlongshort1eraout}0\rightarrow
H^3_{nr}(Y,\mathbb{Z})\otimes\mathbb{Q}/\mathbb{Z}\rightarrow
H^3_{nr}(Y,\mathbb{Q}/\mathbb{Z})\rightarrow {\rm
Tors}(H^4(Y,\mathbb{Z})/H^4(Y,\mathbb{Z})_{alg})\rightarrow 0,
\end{eqnarray}
where $H^4(Y,\mathbb{Z})_{alg}\subset H^4(Y,\mathbb{Z})$ is the
subgroup of cycle classes $[Z],\,Z\in {\rm CH}^2(Y)$. Using this
exact sequence, in order to prove that the third unramified
cohomology group of $X$ is not universally trivial, it suffices to
exhibit a smooth projective variety $B$, and a degree $4$ cohomology
class $\alpha$ on $B\times X$, such that $N\alpha$ is algebraic for
some $N\not=0$, but $\alpha_{\mid U\times X}$ cannot be written as a
sum $a+pr_1^*b$, with $a$ algebraic on $U\times X$ and $b\in
H^4(U,\mathbb{Z})_{tors}$, for any dense Zariski open set $U\subset
B$. Such a class indeed provides a torsion class $\overline{\alpha}$
in $H^4(B\times X,\mathbb{Z})/H^4(B\times X,\mathbb{Z})_{alg}$; by
the exactness on the right in (\ref{eqlongshort1eraout}), there
exists a lift $\tilde{\overline{\alpha}}\in H^3_{nr}(B\times
X,\mathbb{Q}/\mathbb{Z})$; then $\tilde{\overline{\alpha}}_{\mid
U\times X}$ is not in $pr_1^*H^3(U,\mathbb{Q}/\mathbb{Z})$ for any
$U\subset B$ dense Zariski open. Indeed, we have a commutative
diagram

\begin{eqnarray}\label{numerodiag}
 \label{diagram} \xymatrix{
&H^3(U,\mathbb{Q}/\mathbb{Z})\ar[r]\ar[d]&H^4(U,\mathbb{Z})_{tors}\ar[d]&&\\
&H^3(U\times X,\mathbb{Q}/\mathbb{Z})\ar[r]\ar[d]&H^4(U\times
X,\mathbb{Z})_{tors}\ar[d]&&\\
&H^3_{nr}(U\times X,\mathbb{Q}/\mathbb{Z})\ar[r]&H^4(U\times
X,\mathbb{Z})/H^4(U\times X,\mathbb{Z})_{alg}&&}
\end{eqnarray}
where the first two vertical maps are pull-back maps $pr_1^*$,  and
the first two horizontal ones are induced by the exact sequence
$$0\rightarrow \mathbb{Z} \rightarrow\mathbb{Q}\rightarrow
\mathbb{Q}/\mathbb{Z}\rightarrow0.$$ The third horizontal map is the
last map of  (\ref{eqlongshort1eraout}) for $Y=U\times X$. So if
$\tilde{\overline{\alpha}}_{\mid U\times X}$ belonged to
$pr_1^*H^3(U,\mathbb{Q}/\mathbb{Z})$, its image in $H^4(U\times
X,\mathbb{Z})/H^4(U\times X,\mathbb{Z})_{alg}$, which is also the
restriction of $\overline{\alpha}$ to $U\times X)$, would come from
a torsion class in $H^4(U,\mathbb{Z})$, which we have excluded.

We now  construct $B$ and $\alpha$: We take  $B:=J^3(X)$. We first prove:
\begin{lemm} \label{letardif30jui} There is an integer $N$, and a codimension $2$ cycle
$Z_N\in {\rm CH}^2(J^3(X)\times X)$ such that ${Z_N}_{\mid t\times
X}$ is cohomologous to $0$ for any $t\in J^3(X)$ and
$$\Phi_{Z_N}:J^3(X)\rightarrow J^3(X),\,t\mapsto AJ_X(Z_{N,t})$$
is equal to $N\,Id_{J^3(X)}$.
\end{lemm}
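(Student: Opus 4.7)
The plan is to construct $Z_N$ via the Bloch--Srinivas decomposition combined with the Poincar\'{e} divisor on a Jacobian, which is the strategy alluded to in Remark \ref{remahodgeclass}. Since ${\rm CH}_0(X)=\mathbb{Z}$, Bloch--Srinivas \cite{blochsrinivas} yields an integer $N\geq 1$, a divisor $D\subsetneq X$, a point $x\in X$, and a cycle $Z_1\in{\rm CH}^n(X\times X)$ supported on $D\times X$ with
\[N\Delta_X=Z_1+X\times Nx\quad\text{in }{\rm CH}^n(X\times X).\]
The same hypothesis forces $H^3(X,\mathcal{O}_X)=0$, so $J^3(X)$ is an abelian variety. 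Choose a smooth projective desingularization $\tilde{j}:\tilde{D}\to X$ of $D$ and a lift $\tilde{Z}_1\in{\rm CH}^{n-1}(\tilde{D}\times X)$ of $Z_1$, i.e.\ with $(\tilde{j}\times{\rm Id})_*\tilde{Z}_1=Z_1$.

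The transpose correspondence $\tilde{Z}_1^t\in{\rm CH}^{n-1}(X\times\tilde{D})$ induces a morphism of integral Hodge structures $(\tilde{Z}_1^t)_*:H^3(X,\mathbb{Z})\to H^1(\tilde{D},\mathbb{Z})$, and hence a morphism of abelian varieties $\psi:J^3(X)\to{\rm Pic}^0(\tilde{D})$. Likewise the Gysin map $\tilde{j}_*:H^1(\tilde{D},\mathbb{Z})\to H^3(X,\mathbb{Z})$ induces $\tilde{j}_*:{\rm Pic}^0(\tilde{D})\to J^3(X)$. Transposing the Bloch--Srinivas decomposition gives $N\Delta_X=Z_1^t+Nx\times X$; since $Nx\times X$ acts as $0$ on $H^3(X,\mathbb{Z})$ for degree reasons, $(Z_1^t)_*$ acts as $N\cdot{\rm Id}$ on $H^3(X,\mathbb{Z})$. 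Combined with $\tilde{j}_*\circ(\tilde{Z}_1^t)_*=(Z_1^t)_*$ on $H^3$, this shows that $\tilde{j}_*\circ\psi$ induces $N\cdot{\rm Id}$ on $H_1(J^3(X),\mathbb{Z})=H^3(X,\mathbb{Z})$; as $H_1$ of an abelian variety is torsion-free, we conclude $\tilde{j}_*\circ\psi=N\cdot{\rm Id}_{J^3(X)}$.

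Finally, let $\mathcal{P}\in{\rm CH}^1({\rm Pic}^0(\tilde{D})\times\tilde{D})$ be the Poincar\'{e} divisor, normalized so that each fiber $\mathcal{P}_s\in{\rm CH}^1(\tilde{D})$ lies in ${\rm Pic}^0(\tilde{D})$ with $AJ_{\tilde{D}}(\mathcal{P}_s)=s$. Define
\[Z_N:=({\rm Id}_{J^3(X)}\times\tilde{j})_*(\psi\times{\rm Id}_{\tilde{D}})^*\mathcal{P}\in{\rm CH}^2(J^3(X)\times X).\]
For each $t\in J^3(X)$, the fiber is $Z_{N,t}=\tilde{j}_*(\mathcal{P}_{\psi(t)})$, which is homologous to $0$ in $X$ because $\mathcal{P}_{\psi(t)}$ is hom-trivial on $\tilde{D}$. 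By the classical functoriality $AJ_X\circ\tilde{j}_*=\tilde{j}_*\circ AJ_{\tilde{D}}$ for the proper morphism $\tilde{j}$,
\[\Phi_{Z_N}(t)=AJ_X(Z_{N,t})=\tilde{j}_*(AJ_{\tilde{D}}(\mathcal{P}_{\psi(t)}))=\tilde{j}_*(\psi(t))=N\cdot t,\]
so $\Phi_{Z_N}=N\cdot{\rm Id}_{J^3(X)}$, as required.

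The main technical point is the passage from the cohomological action of $\tilde{Z}_1^t$ to an honest morphism $\psi$ of abelian varieties, and the compatibility of Abel--Jacobi with the Gysin pushforward $\tilde{j}_*$; both are standard, relying respectively on the torsion-freeness of $H_1$ of abelian varieties and on the classical description of the Abel--Jacobi map via relative cohomology, but they must be invoked carefully to produce an actual algebraic cycle rather than just a cohomology class.
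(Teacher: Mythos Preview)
Your proof is correct but follows a genuinely different route from the paper's. The paper argues more geometrically: from ${\rm CH}_0(X)=\mathbb{Z}$ one knows (Bloch--Srinivas) that the Abel--Jacobi map ${\rm CH}^2(X)_{hom}\to J^3(X)$ is surjective, hence there is a smooth projective $W$ with a cycle $Z\in{\rm CH}^2(W\times X)$ whose fibres are homologically trivial and such that $\Phi_Z:W\to J^3(X)$ is surjective; cutting $W$ by hyperplanes makes $\Phi_Z$ generically finite of some degree $N$, and one sets $Z_N:=(\Phi_Z,{\rm Id}_X)_*Z$. Your argument instead unwinds the Bloch--Srinivas decomposition itself: you factor the action of the transposed diagonal through $H^1$ of a desingularisation $\tilde D$ of the supporting divisor, and then realise the resulting morphism $\psi:J^3(X)\to{\rm Pic}^0(\tilde D)$ cycle-theoretically via the Poincar\'e bundle. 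What your approach buys is explicitness---the integer $N$ is exactly the one appearing in the decomposition of the diagonal, and the construction makes transparent why $N\alpha$ (in the notation of Remark~\ref{remahodgeclass}) is algebraic, which is precisely how the lemma is used afterwards. The paper's approach is shorter and avoids the bookkeeping with $\tilde D$ and the functoriality of Abel--Jacobi under Gysin maps, at the price of a less canonical $N$.
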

\begin{proof} Indeed, as we know that ${\rm
CH}_0(X)=\mathbb{Z}$, the Abel-Jacobi map $$\Phi_X:{\rm
CH}^2(X)_{hom}\rightarrow J^3(X)$$ is surjective (see
\cite{blochsrinivas}). Hence there exist a variety $W$ and a
codimension $2$ cycle $Z\in {\rm CH}^2(W\times X)$ such
that
$$\Phi_Z:W\rightarrow J^3(X),\,\Phi_Z(t)=AJ_X(Z_t),$$
is a surjective morphism of algebraic varieties. Replacing $W$ by a
linear section if necessary, we may assume that $\Phi_Z:W\rightarrow
J^3(X)$ is generically finite of degree $N$. We now take for $Z_N$
the cycle $(\Phi_{Z},Id_X)*(Z)$. It is immediate to check that
$\Phi_{Z_N}:J^3(X)\rightarrow J^3(X)$ is equal to $N\,Id_{J^3(X)}$.
 Thus the lemma is
proved.
\end{proof}
 The condition that $\Phi_{Z_N}:J^3(X)\rightarrow J^3(X)$
is equal to $N\,Id_{J^3(X)}$ is equivalent to the  fact that the
K\"unneth component of type $(1,3)$ of $[Z_N]$ induces $N$ times the
canonical isomorphism between $H_1(J^3(X),\mathbb{Z})$ and
$H^3(X,\mathbb{Z})$. Furthermore, as explained above, by taking the
K\"unneth component of type $(1,3)$, we may assume $[Z_N]$ is of
K\"unneth type $(1,3)$.
 Recall now from Remark \ref{remahodgeclass}  that there
is also an integral (Hodge) class $\alpha$ on $J^3(X)\times X$ which
is of K\"unneth type $(1,3)$ and induces the canonical isomorphism
between $H_1(J^3(X),\mathbb{Z})$ and $H^3(X,\mathbb{Z})$. We thus
have
$$[Z_N]=N\alpha\,\,\,{\rm in}\,\,H^{4}(J^3(X)\times X,\mathbb{Z}).$$
Hence we constructed the class $\alpha$ and to finish we just have
to prove the following:
\begin{lemm} For any dense Zariski open set $U\subset J^3(X)$, the image of $\alpha$ in
$$H^4(U\times X,\mathbb{Z})/H^4(U\times X,\mathbb{Z})_{alg}$$ is
nonzero modulo $pr_1^* H^4(U,\mathbb{Z})_{tors}$.
\end{lemm}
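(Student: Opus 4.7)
The plan is to argue by contradiction: suppose there is a dense Zariski open $U\subset J^3(X)$ for which $\alpha_{|U\times X} = [Z_U] + pr_1^* b$ with $Z_U\in {\rm CH}^2(U\times X)$ and $b\in H^4(U,\mathbb{Z})_{\text{tors}}$. The goal is to extract from this decomposition an algebraic representative of $\alpha$ on the whole product $J^3(X)\times X$, and then to use Lemma \ref{letardif30jui} together with the K\"unneth type $(1,3)$ property of $\alpha$ to produce a genuine universal codimension $2$ cycle, contradicting the standing hypothesis of Theorem \ref{theounram} we are trying to derive.

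The first step is to use the algebraicity of the K\"unneth projector $\delta_3$ to clean up the equation. Acting with $\delta_3$ on the second factor kills the $pr_1^*b$ contribution (which lives in the $H^*(U)\otimes H^0(X)$ summand), fixes $\alpha_{|U\times X}$ (already of K\"unneth type $(1,3)$), and turns $[Z_U]$ into the class $[W_U]$ of an explicitly algebraic cycle $W_U\in {\rm CH}^2(U\times X)$. This reduces the problem to the cleaner equation $\alpha_{|U\times X} = [W_U]$; it is where the full strength of hypothesis \ref{hytor} is used, and it disposes of the torsion subtlety once and for all.

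The second step is extension and a Gysin analysis. After possibly shrinking $U$ we may assume $D := J^3(X)\setminus U$ is a divisor (removing a closed subset of codimension $\geq 2$ from $J^3(X)\times X$ does not affect the cohomology in degree $4$). By the Chow localization sequence $W_U$ lifts to a cycle $\tilde W\in {\rm CH}^2(J^3(X)\times X)$, and then $\alpha - [\tilde W]$ vanishes on $U\times X$, so by cohomological localization it is the Gysin pushforward $(j\times \mathrm{id}_X)_*\gamma$ of a class $\gamma\in H^2(\tilde D\times X,\mathbb{Z})$ with $\tilde D\to D$ a desingularization. The crux is a K\"unneth computation: because ${\rm CH}_0(X)=\mathbb{Z}$, Mumford--Bloch--Srinivas gives $H^{1,0}(X)=0$, and since $H^*(X,\mathbb{Z})$ is torsion-free this forces $H^1(X,\mathbb{Z})=0$. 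Hence $H^2(\tilde D\times X,\mathbb{Z}) = H^2(\tilde D,\mathbb{Z})\oplus H^2(X,\mathbb{Z})$, and the Gysin pushforward of such classes lies in the K\"unneth summands $H^4(J^3(X))\otimes H^0(X)$ and $H^2(J^3(X))\otimes H^2(X)$ of $H^4(J^3(X)\times X,\mathbb{Z})$, but \emph{not} in the summand $H^1(J^3(X))\otimes H^3(X)$ of type $(1,3)$. Projecting the equality $\alpha - [\tilde W] = (j\times\mathrm{id})_*\gamma$ onto the $(1,3)$ K\"unneth summand, and applying once more the algebraic projector $\delta_3$, yields $\alpha = [\delta_3\cdot \tilde W]$ in $H^4(J^3(X)\times X,\mathbb{Z})$.

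To conclude, this algebraic representative of $\alpha$, after a translation on $J^3(X)$ normalizing the Abel--Jacobi image of the fibre over $0$, is by construction a universal codimension $2$ cycle on $X$ (its cohomology class induces the canonical isomorphism $H_1(J^3(X),\mathbb{Z})\cong H^3(X,\mathbb{Z})$, hence $\Phi_Z = \mathrm{id}_{J^3(X)}$). This contradicts the hypothesis that $X$ admits no such universal cycle, proving the lemma. The main obstacle I expect is the K\"unneth/Gysin step: one has to be careful that after shrinking $U$ to make $D$ a pure divisor the decomposition $\alpha_{|U\times X} = [W_U] + pr_1^*(\text{torsion})$ is preserved, and one has to verify precisely that the Gysin image from $\tilde D\times X$ has trivial component of type $(1,3)$, which is the place where the (at first sight weak) vanishing $H^1(X,\mathbb{Z})=0$ coming from ${\rm CH}_0(X)=\mathbb{Z}$ plays the decisive role.
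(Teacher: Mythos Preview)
Your proposal is correct and follows essentially the same route as the paper: kill the $pr_1^*b$ term by applying the algebraic K\"unneth projector $\delta_3$, extend the resulting algebraic class from $U\times X$ to $J^3(X)\times X$, observe that the discrepancy is supported on $D\times X$ and hence has vanishing $(1,3)$-component, and conclude that $\alpha$ is the class of a universal codimension $2$ cycle. One small remark: your invocation of $H^1(X,\mathbb{Z})=0$ in the Gysin step is more than you need, since the Gysin map from $\tilde D\times X$ shifts the first K\"unneth degree by at least $2$ and therefore can never hit $H^1(J^3(X))\otimes H^3(X)$ regardless of $H^1(X)$.
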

\begin{proof} Otherwise
$\alpha_{\mid U\times X}=a+pr_1^*b$, where $a$ is algebraic and $b$
is torsion, for some dense Zariski open set $U\subset J^3(X)$. But
$\alpha_{\mid U\times X}$ is of K\"unneth  type $(1,3)$ while
$pr_1^*b$ is of K\"unneth  type $(4,0)$. As  the K\"unneth
decomposition, which works as well on $U$, preserves algebraic
classes, we conclude  by projection on the K\"unneth component of
type $(1,3)$ that $\alpha_{\mid U\times X}=a$ is algebraic on
$U\times X$. (Note that, alternatively, we can restrict to a smaller
Zariski open set, where the torsion class $b$ vanishes.)
 This means
that there is a decomposition
$$\alpha=\alpha_1+\alpha_2$$
in $H^{4}(J^3(X)\times X,\mathbb{Z})$, where $\alpha_1$ is the class
of a cycle $Z$ on $J^3(X)\times X$ and $\alpha_2$ is a cohomology
class supported on $D\times X$, where $D=J^3(X)\setminus U$. But
then the K\"unneth component of type $(1,3)$ of $\alpha_2$ must be
$0$. So the K\"unneth component of type $(1,3)$ of $Z$ is equal to
$\alpha$ and $Z$ is a universal codimension $2$ cycle on
$J^3(X)\times X$, contradicting our assumption.

\end{proof}

In the other direction, let us prove that if $X$ satisfies the
assumptions of Theorem \ref{theounram} and has a universal
codimension $2$ cycle, then its third unramified cohomology group
with torsion coefficients is universally trivial.

Let thus $B$ be a smooth quasi-projective complex variety and let
$\overline{B}$ be a smooth projective completion of $B$. Let
$\alpha\in H^3_{nr}(B\times X,\mathbb{Q}/\mathbb{Z})$. We want to
show that up to shrinking $B$ to a Zariski open set $B'$, $\alpha
=pr_1^*\beta$ for some class $\beta\in
H^3(B',\mathbb{Q}/\mathbb{Z})$. Note that $H^3_{nr}(B\times
X,\mathbb{Q}/\mathbb{Z})$ contains $H^3_{nr}(B\times
X,\mathbb{Z})\otimes \mathbb{Q}/\mathbb{Z}$.
\begin{lemm}\label{lemmapresquefin} If $X$ has trivial ${\rm CH}_0$ group,
the map $pr_1^*:H^3_{nr}(B,\mathbb{Z})\rightarrow  H^3_{nr}(B\times
X,\mathbb{Z})$ is an isomorphism.
\end{lemm}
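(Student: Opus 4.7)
The plan is to derive Lemma~\ref{lemmapresquefin} from the Bloch--Srinivas decomposition of the diagonal, using that correspondences act on unramified cohomology.

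Injectivity is immediate: choose a closed point $x\in X$ and consider the section $i_x:B\to B\times X$, $b\mapsto (b,x)$ of $pr_1$. Pullback functoriality gives $i_x^*\circ pr_1^*=\mathrm{id}_{H^3_{nr}(B,\mathbb{Z})}$, so $pr_1^*$ is split injective.

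For surjectivity, since $CH_0(X)=\mathbb{Z}$, Bloch--Srinivas \cite{blochsrinivas} provides an integer $N>0$, a point $x_0\in X$, a proper closed subset $D\subsetneq X$, and a cycle $Z_1\in CH^n(X\times X)$ supported on $D\times X$ such that
\[
N\Delta_X=Z_1+N(X\times\{x_0\})\quad\text{in}\quad CH^n(X\times X).
\]
Given $\alpha\in H^3_{nr}(B\times X,\mathbb{Z})$, I would act by both sides of this identity as self-correspondences of $B\times X$ on $\alpha$. Writing $\beta:=i_{x_0}^*\alpha\in H^3_{nr}(B,\mathbb{Z})$, this yields
\[
N\alpha=Z_1^*\alpha+N\,pr_1^*\beta\quad\text{in}\quad H^3_{nr}(B\times X,\mathbb{Z}).
\]
Because $Z_1$ is supported on $D\times X$, the class $Z_1^*\alpha$ has vanishing restriction to the open subset $B\times(X\setminus D)$; since unramified classes inject into the cohomology of the generic point of $B\times X$, the restriction $H^3_{nr}(B\times X,\mathbb{Z})\to H^3_{nr}(B\times(X\setminus D),\mathbb{Z})$ is injective. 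Hence $Z_1^*\alpha=0$, and therefore $N(\alpha-pr_1^*\beta)=0$ in $H^3_{nr}(B\times X,\mathbb{Z})$.

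The main obstacle is upgrading this $N$-torsion statement to the identity $\alpha=pr_1^*\beta$. I would handle this via the Bockstein sequence attached to $0\to\mathbb{Z}\xrightarrow{\cdot N}\mathbb{Z}\to\mathbb{Z}/N\to 0$, which realizes $H^3_{nr}(B\times X,\mathbb{Z})[N]$ as a quotient of $H^2_{nr}(B\times X,\mathbb{Z}/N)\subset \mathrm{Br}(B\times X)[N]$. Under the ambient hypotheses of Theorem~\ref{theounram}---in particular the torsion-freeness of $H^*(X,\mathbb{Z})$, which forces $\mathrm{Br}(X)=0$---a parallel application of Bloch--Srinivas to the lower-degree cycle module $H^2_{nr}(-,\mathbb{Z}/N)$ gives the product formula $\mathrm{Br}(B\times X)\cong pr_1^*\mathrm{Br}(B)$. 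Combining these, the $N$-torsion class $\alpha-pr_1^*\beta$ lies in the image of $pr_1^*$, so $\alpha\in pr_1^*H^3_{nr}(B,\mathbb{Z})$ as required.
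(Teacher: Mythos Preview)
Your argument up to the equality $N(\alpha-pr_1^*\beta)=0$ matches the paper's line of reasoning. The gap is in the last paragraph, where you try to kill this $N$-torsion. Your proposed ``parallel application of Bloch--Srinivas'' to $H^2_{nr}(-,\mathbb{Z}/N)$ (or to $\mathrm{Br}$) runs into exactly the same obstruction you are trying to overcome: acting by the decomposition $N\Delta_X=Z_1+N(X\times x_0)$ on an $N$-torsion group yields the vacuous identity $0=0$, so you cannot conclude $\mathrm{Br}(B\times X)=pr_1^*\mathrm{Br}(B)$ this way. Establishing that product formula genuinely (say via a K\"unneth-type analysis using $\mathrm{Br}(X)=0$) is a separate, nontrivial task you have not carried out, and in any case it drags in the extra hypotheses of Theorem~\ref{theounram}, whereas the lemma only assumes ${\rm CH}_0(X)=\mathbb{Z}$.

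The paper avoids all of this with a single observation you are missing: by the Merkurjev--Suslin theorem (more generally Bloch--Kato), the Zariski sheaves $\mathcal{H}^i(\mathbb{Z})$ have no torsion, so $H^3_{nr}(Y,\mathbb{Z})$ is torsion-free for every smooth $Y$. Hence $N(\alpha-pr_1^*\beta)=0$ immediately gives $\alpha=pr_1^*\beta$; equivalently, one may pass to $\mathbb{Q}$-coefficients from the start and invert $N$. This replaces your entire final paragraph and requires no hypothesis beyond ${\rm CH}_0(X)=\mathbb{Z}$.
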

\begin{proof}  Indeed, this map has a left inverse, namely the
restriction map $r_{B\times x}$ to $B\times x$ for any point $x\in
X(\mathbb{C})$. Furthermore it follows from the Merkurjev-Suslin
theorem (or the Bloch-Kato conjecture) that the groups $
H^3_{nr}(B,\mathbb{Z})$ and $H^3_{nr}(B\times X,\mathbb{Z})$ have no
torsion. Thus it suffices to show that the map
$pr_1^*:H^3_{nr}(B,\mathbb{Q})\rightarrow
 H^3_{nr}(B\times
X,\mathbb{Q})$ is an isomorphism. This is however implied by the
Bloch-Srinivas decomposition of the diagonal (\ref{decompintro}) for
some integer $N\not=0$, which is satisfied by $X$ since ${\rm
CH}_0(X)=\mathbb{Z}$. This gives as well a decomposition over $B$:
\begin{eqnarray}\label{decompintronewB}
N(B\times \Delta_X)=Z_1+Z_2\,\,{\rm in}\,\, {\rm CH}^n(B\times
X\times X),\,n={\rm dim}\,X,
\end{eqnarray} where $Z_2=N(B\times X\times x)$ for some point $x\in X(\mathbb{C})$ and $Z_1$ is supported on
$B\times D\times X$, for some proper closed algebraic subset
$D\varsubsetneqq X$. As recalled  in \cite[Appendix]{CTvoisin}, the
various cycles $\gamma$ appearing in this equality act on unramified
cohomology of $B\times X$ via
$$\eta\mapsto \gamma^*\eta:= pr_{B,1*}(pr_{B,2}^*\eta\cdot [\gamma]_{BO}),$$
where the class $[\gamma]_{BO}\in H^n((B\times X\times
X)_{Zar},\mathcal{H}^n(\mathbb{Z}))$ is the Bloch-Ogus cycle class
of $\gamma$ (see \cite{blochogus}), and $pr_{B,1},\, pr_{B,2}$ are
the two projections from $B\times X\times X$ to $B\times X$. As
$B\times \Delta_X$ acts as identity on $H^3_{nr}(B\times
X,\mathbb{Q})$ and $Z_{1}^* $ acts as $0$ on $H^3_{nr}(B\times
X,\mathbb{Q})$, one gets that
$$N\,Id_{H^3_{nr}(B\times X,\mathbb{Q})}=Z_2^*=N(pr_1^*\circ r_{B\times
x}),$$ which proves the result.

\end{proof}
From Lemma \ref{lemmapresquefin}, we see  that it suffices to show
that, up to passing to a dense Zariski open subset of $B$ if
necessary, the image of $\alpha$ in the quotient
$$\frac{H^3_{nr}(B\times
X,\mathbb{Q}/\mathbb{Z})}{H^3_{nr}(B\times X,\mathbb{Z})\otimes
\mathbb{Q}/\mathbb{Z}}$$ is $0$. By the exact sequence
(\ref{eqlongshort1eraout}), this quotient is isomorphic to  the
torsion of the group $H^4(B\times X,\mathbb{Z})/H^4(B\times
X,\mathbb{Z})_{alg}$. So the result follows from the following:
\begin{lemm} Let $X$ satisfy the assumptions of Theorem
\ref{theounram}. Then, if $X$ has a universal codimension $2$ cycle,
for any $B$ and any degree $4$ class $\alpha\in H^4(B\times
X,\mathbb{Z})$, such that $N\alpha$ is algebraic for some integer
$N\not=0$, there is  a dense  Zariski open subset $B'\subset B$,
such that  the class $\alpha$ is algebraic in $B'\times X$.
\end{lemm}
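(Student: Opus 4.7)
The plan is to K\"unneth-decompose $\alpha$ and handle each component separately, with the universal cycle $\Xi$ doing the substantive work on the $(1,3)$-component. Since $H^*(X,\mathbb{Z})$ is torsion-free, write $\alpha = \sum_{k=0}^{4} \alpha_k$ with $\alpha_k \in H^{4-k}(B,\mathbb{Z}) \otimes H^k(X,\mathbb{Z})$. The algebraic K\"unneth projectors $\delta_0,\ldots,\delta_4$ of $X$ (hypothesis \ref{hytor}), viewed as relative self-correspondences of $B \times X \times X$ over $B$, preserve algebraicity; applied to an algebraic representative of $N\alpha$ they yield algebraic representatives of each $N\alpha_k$, so every $\alpha_k$ is an integral Hodge class of K\"unneth type $(4-k,k)$.

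The easy components are dispatched first. Since $CH_0(X) = \mathbb{Z}$ and $H^*(X,\mathbb{Z})$ is torsion-free, Bloch--Srinivas gives $H^1(X,\mathcal{O}_X)=0$ and hence $H^1(X,\mathbb{Z}) = 0$, so $\alpha_1 = 0$. By hypothesis \ref{hy3}, the integral Hodge class $\alpha_4 \in H^4(X,\mathbb{Z})$ is algebraic on $X$, so $\pi_X^*\alpha_4$ is algebraic on $B \times X$ (represented by $B\times Z_4$).

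The central step is $\alpha_3 \in H^1(B,\mathbb{Z}) \otimes H^3(X,\mathbb{Z})$. View this as a morphism of integral Hodge structures $\phi\colon H_1(B,\mathbb{Z}) \to H^3(X,\mathbb{Z})$. Passing to a smooth projective completion $\overline{B}$ of $B$, this $\phi$ lifts by Hodge theory to a morphism of abelian varieties $\widetilde\phi\colon \mathrm{Alb}(\overline B) \to J^3(X)$; composing with the Albanese map $B \hookrightarrow \overline B \to \mathrm{Alb}(\overline B)$ yields an algebraic morphism $f\colon B \to J^3(X)$. By the defining property $\Phi_\Xi = \mathrm{id}_{J^3(X)}$ of the universal cycle $\Xi \in CH^2(J^3(X) \times X)$, the algebraic cycle $(f, \mathrm{id}_X)^*\Xi \in CH^2(B \times X)$ has $(1,3)$-K\"unneth component exactly $\alpha_3$, so $\alpha_3$ is algebraic on $B \times X$.

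The main obstacle is then the remaining components $\alpha_0 \in H^4(B)$ and $\alpha_2 \in H^2(B) \otimes H^2(X)$, which are $N$-torsion classes in the quotient $H^4(B \times X,\mathbb{Z})/H^4(B \times X,\mathbb{Z})_{alg}$. The $H^{1,1}(B) \otimes H^{1,1}(X)$-part of $\alpha_2$ is handled by the Lefschetz $(1,1)$-theorem, giving (rationally) a class in $NS(B)\otimes NS(X)$ realized by products of divisors. For the off-diagonal part of $\alpha_2$ and for $\alpha_0$, I would invoke a Bloch--Ogus/Gersten argument: the sheaves $\mathcal{H}^i(\mathbb{Z})$ are torsion-free by Voevodsky's proof of Bloch--Kato, so combined with the exact sequence (\ref{eqlongshort1eraout}) any $N$-torsion class in $H^4/H^4_{alg}$ can be trivialized on a sufficiently small dense Zariski open. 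Intersecting the opens produced for each K\"unneth piece gives the required $B'$ on which $\alpha$ itself is algebraic.
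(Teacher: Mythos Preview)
Your overall strategy---K\"unneth-decompose $\alpha$ using the algebraic projectors $\delta_i$, kill the $(4,0)$ and $(2,2)$ pieces on a Zariski open via Bloch--Kato, use hypothesis~\ref{hy3} for the $(0,4)$ piece, and use the universal cycle for the $(1,3)$ piece---is exactly the paper's approach. The treatment of $\alpha_1,\alpha_3,\alpha_4$ is fine (though for $\alpha_3$ you gloss over the extension to $\overline{B}$: one needs that $N\alpha_3$, being algebraic, extends to $\overline{B}\times X$, and then that the restriction $H^1(\overline{B},\mathbb{Z})\to H^1(B,\mathbb{Z})$ has torsion-free cokernel, so $\alpha_3$ itself extends; the paper isolates this as a separate lemma).

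There is a genuine gap in your handling of $\alpha_2$ (and to a lesser extent $\alpha_0$). The Lefschetz $(1,1)$ remark is a red herring: since ${\rm CH}_0(X)=\mathbb{Z}$ we already have $H^2(X,\mathbb{Z})=NS(X)$, so there is no ``off-diagonal part'' on the $X$ side, and on the $B$ side Hodge theory is delicate because $B$ is only quasi-projective. More importantly, the Bloch--Kato step must be run on $B$, not on $B\times X$: torsion-freeness of $\mathcal{H}^i(\mathbb{Z})$ on $(B\times X)_{Zar}$ would only let you shrink to a Zariski open of $B\times X$, whereas the lemma requires an open of the form $B'\times X$. The missing step is to isolate the $H^2(B)$-coefficients. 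Writing $\alpha_2=\sum_i pr_1^*\beta_i\smile pr_X^*\alpha_i$ for a basis $(\alpha_i)$ of $H^2(X,\mathbb{Z})$, one needs to show each $\beta_i\in H^2(B,\mathbb{Z})$ has an integer multiple that is algebraic on $B$. The paper does this by choosing a Poincar\'e-dual basis $(\alpha_i^*)$ of $H^{2n-2}(X,\mathbb{Z})$, using hard Lefschetz and $H^2(X,\mathbb{Q})=NS(X)_{\mathbb{Q}}$ to write $N'\alpha_i^*=[z_i]$ for $1$-cycles $z_i$, and then computing $N'\beta_i={pr_1}_*(\alpha_2\smile pr_X^*[z_i])$, so that $NN'\beta_i$ is algebraic on $B$. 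Only then can you remove the support of the representing divisor and apply torsion-freeness of $\mathcal{H}^2(\mathbb{Z})$ on $B_{Zar}$ to conclude $\beta_i|_{B''}=0$. The same reduction-to-$B$ is what makes the $\alpha_0$ argument work: write $\alpha_0=pr_1^*\beta$, note $N\beta$ is algebraic on $B$, remove its support, and then kill the torsion class $\beta$ on a smaller open of $B$.
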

\begin{proof} First of all, as the integral cohomology of $X$ is
torsion free, there is a K\"unneth decomposition
$$\alpha=\sum_{0\leq i\leq4}\alpha_{i,4-i},$$
where $\alpha_{i,4-i}\in H^i(B,\mathbb{Z})\otimes
H^{4-i}(X,\mathbb{Z})$. Furthermore, this decomposition is obtained
by applying to $\alpha$ the  K\"unneth projectors $\delta_i,\,0\leq
i\leq 4$ of $X$. As we know that the $\delta_i$'s are algebraic for
$i\leq 4$, each class $\alpha_{i,4-i}$ satisfies the property that
$N\alpha_{i,4-i}$ is algebraic.

Consider first the term $\alpha_{4,0}\in pr_1^*H^4(B,\mathbb{Z})$
and write it $\alpha_{4,0}=pr_1^*\beta,\,\beta\in
H^4(B,\mathbb{Z})$. Then $N\beta$ is algebraic on $B$, so there is a
dense Zariski open set $B'\subset B$ such that $N\alpha=0$ in
$H^4(B',\mathbb{Z})$. The Bloch-Kato conjecture  then implies that
there is a dense Zariski open set $B''\subset B$ such that
$\alpha=0$ in $H^4(B'',\mathbb{Z})$ since it implies that the sheaf
$\mathcal{H}^4(\mathbb{Z})$ on $B_{Zar}$ has no
$\mathbb{Z}$-torsion.

Next consider the class $\alpha_{2,2}\in p^*H^2(B,\mathbb{Z})\otimes
H^2(X,\mathbb{Z})$ and write it
$$\alpha_{2,2}=\sum_ipr_1^*\beta_i\smile pr_X^* \alpha_i,\,\beta_i\in
H^2(B,\mathbb{Z}),$$ where $pr_X:B\times X\rightarrow X$ is the
second projection and the $\alpha_i$'s form a basis of the free
abelian group $H^2(X,\mathbb{Z})$, which is generated by divisors
classes since ${\rm CH}_0(X)=\mathbb{Z}$. The group
$H^{2n-2}(X,\mathbb{Z})$ admits a Poincar\'e dual basis
$\alpha_i^*$, with $<\alpha_i,\alpha_j^*>=\delta_{ij}$.
  The $\mathbb{Q}$-vector $H^{2n-2}(X,\mathbb{Q})$ is
generated by classes of curves, since $H^2(X,\mathbb{Q})$ is
generated by divisor classes and for any ample line bundle $L$ on
$X$, the topological Chern class $l=c_{1,top}(L)\in
H^2(X,\mathbb{Q})$ provides a Lefschetz isomorphism $$
l^{n-2}\smile:H^2(X,\mathbb{Q})\cong H^{2n-2}(X,\mathbb{Q}).$$
 Thus there exists a nonzero integer $N'$ such that
 $N'\alpha_i^*=[z_i]$ in $H^{2n-2}(X,\mathbb{Z})$
  for some $1$-cycles $z_i\in
CH^{n-1}(X)$. It thus follows that
$$\beta_i={pr_1}_*(\alpha_{2,2}\smile
pr_X^*\alpha_i^*),\,N'\beta_i={pr_1}_*(\alpha_{2,2}\smile
pr_X^*[z_i]).$$ As $N\alpha_{2,2}$ is algebraic, so are the classes
$NN'\beta_i$. This implies that the classes $\beta_i$ themselves are
algebraic, by the same argument as before since they restrict to
torsion classes on some Zariski open set $B'\subset B$, and this
implies that they have to vanish identically on a dense Zariski open
set $B''\subset B$.

The term $\alpha_{0,4}\in pr_X^*H^4(X,\mathbb{Z})\cong
H^4(X,\mathbb{Z}) $  satisfies the condition that $N\alpha_{0,4}$ is
algebraic. The assumption \ref{hy3} made on $X$ implies that
$\alpha_{0,4}$ itself is algebraic.

As $H^1(X,\mathbb{Z})=0$, we are thus left with $\alpha_{1,3}\in
H^1(B,\mathbb{Z})\otimes H^3(X,\mathbb{Z})$. We have now:
\begin{lemm} \label{lefinfinfinfin}
(i) The class $\alpha_{1,3}$ is the restriction to $B\times X$ of a
class $\overline{\alpha}_{1,3}\in
H^1(\overline{B},\mathbb{Z})\otimes H^3(X,\mathbb{Z})$ which has the
property that $N\overline{\alpha}_{1,3}$ is algebraic.

(ii)  There is a morphism $\phi: \overline{B}\rightarrow J^3(X)$
such that $(\phi,Id_X)^*\alpha=\overline{\alpha}_{1,3}$, where
$\alpha$ is the integral Hodge class on $J^3(X)\times X$ introduced
in Remark \ref{remahodgeclass}.
\end{lemm}
\begin{proof} (i)  The class $N\alpha_{1,3}$ extends to an integral
cohomology class on $\overline{B}\times X$ because it is algebraic.
As the K\"unneth components of the diagonal of $X$ are algebraic,
one may even assume that the class $N\alpha_{1,3}$ extends to an
integral cohomology class $\beta$ on $\overline{B}\times X$  which
is algebraic and of K\"unneth type $(1,3)$. As $H^*(X,\mathbb{Z})$
has no torsion, this class $\beta$ can be seen as a morphism
$$\beta_*:H_3(X,\mathbb{Z})\rightarrow H^1(\overline{B},\mathbb{Z})$$
which has the property that, denoting by
$r_B:H^1(\overline{B},\mathbb{Z})\rightarrow H^1({B},\mathbb{Z})$
the restriction to $B$, the composite morphism
$$r_B\circ
\beta_*=N{\alpha_{1,3}}_*:H_3(X,\mathbb{Z})\rightarrow
H^1({B},\mathbb{Z})$$ is divisible by $N$. On the other hand, it is
quite easy to prove  that the restriction map $r_B$ is injective and
that its cokernel is torsion free. Thus the morphism $\beta_*$ is
also (uniquely) divisible by $N$, and so is $\beta$, which proves
(i).

(ii)  The class $N\overline{\alpha}_{1,3}$ being algebraic, it is an
integral Hodge class on $\overline{B}\times X$, so
$\overline{\alpha}_{1,3}$ is also an integral Hodge class. But the
morphisms from $\overline{B}$ to $J^3(X)$ identify  (modulo the
translations of $J^3(X)$) to the morphisms of complex tori between
${\rm Alb}\,\overline{B}$ and $J^3(X)$ which themselves identify
 to the morphisms of Hodge structures
$$H_1(\overline{B},\mathbb{Z})/{\rm torsion}\rightarrow
H^3(X,\mathbb{Z})$$ because $H^3(X,\mathbb{Z})$ has no torsion, and
finally these morphisms of Hodge structures identify to the integral
Hodge classes of K\"unneth type $(1,3)$ on $\overline{B}\times X$.

Hence the class $\overline{\alpha}_{1,3}$ provides us with a
morphism
$$\phi
: \overline{B}\rightarrow J^3(X)$$
and it is a formal fact to prove following  the chain of
identifications above that
$$(\phi,Id_X)^*(\alpha)=\overline{\alpha}_{1,3}.$$

\end{proof}
The proof is now finished because we assumed that $X$ admits a
universal codimension $2$ cycle. This is equivalent to saying that
the class $\alpha$ of Remark \ref{remahodgeclass} is algebraic
because $H^3(X,\mathbb{Z})$ has no torsion. Lemma
\ref{lefinfinfinfin}  thus implies that $\alpha_{1,3}$ is algebraic.
\end{proof}
The proof of Theorem \ref{theounram} is finished.
\end{proof}

Centre de math\'ematiques Laurent Schwartz

91128 Palaiseau C\'edex, France

\smallskip
 voisin@math.jussieu.fr
    \end{document}